\documentclass{article}
\usepackage[top=2cm, bottom=2cm, left=2cm, right=2cm]{geometry}
\usepackage[alphabetic,lite]{amsrefs}                                          
\usepackage{amssymb}                                                           
\usepackage{amsthm}                                                            
\usepackage{appendix}                                                          
\usepackage{array}                                                             
\usepackage{bm}                                                                
\usepackage{colonequals}                                                       
\usepackage{color}                                                             
\usepackage{enumitem}                                                          
\usepackage{graphicx}                                                          
\usepackage{indentfirst}                                                       
\usepackage{mathrsfs}                                                          
\usepackage{mathtools}                                                         
\usepackage{moreenum}                                                          
\usepackage{stmaryrd}                                                          
\usepackage{verbatim}                                                          
\usepackage[all,cmtip]{xy}                                                     

\definecolor{tianred}{rgb}{0.79, 0.17, 0.57}                                   
\definecolor{tianblue}{rgb}{0.0, 0.22, 0.66}                                   
\definecolor{tianpink}{rgb}{0.88, 0.56, 0.59}                                  
\definecolor{tiangreen}{rgb}{0.24, 0.82, 0.44}                                 
\usepackage[colorlinks,
            linkcolor=tianred,
            anchorcolor=tianblue,
            citecolor=tiangreen,
            urlcolor=tianpink]{hyperref}                                       
\usepackage{cleveref}                                                          

\usepackage[OT2,T1]{fontenc}
\DeclareSymbolFont{cyrletters}{OT2}{wncyr}{m}{n}
\DeclareMathSymbol{\RBe}{\mathalpha}{cyrletters}{"42}                          
\DeclareMathSymbol{\Che}{\mathalpha}{cyrletters}{"51}                          
\DeclareMathSymbol{\Sha}{\mathalpha}{cyrletters}{"58}                          

\makeatletter
\DeclareRobustCommand\widecheck[1]{{\mathpalette\@widecheck{#1}}}
\def\@widecheck#1#2{%
    \setbox\z@\hbox{\m@th$#1#2$}%
    \setbox\tw@\hbox{\m@th$#1%
       \widehat{%
          \vrule\@width\z@\@height\ht\z@
          \vrule\@height\z@\@width\wd\z@}$}%
    \dp\tw@-\ht\z@
    \@tempdima\ht\z@ \advance\@tempdima2\ht\tw@ \divide\@tempdima\thr@@
    \setbox\tw@\hbox{%
       \raise\@tempdima\hbox{\scalebox{1}[-1]{\lower\@tempdima\box
\tw@}}}%
    {\ooalign{\box\tw@ \cr \box\z@}}}
\makeatother



\theoremstyle{plain}      \newtheorem{thm}{Theorem}[section]                   
\theoremstyle{plain}                        
\theoremstyle{plain}      \newtheorem{lem}[thm]{Lemma}                         
\theoremstyle{plain}                             
\theoremstyle{plain}      \newtheorem{cor}[thm]{Corollary}                     
\theoremstyle{plain}                         
\theoremstyle{plain}      \newtheorem{prop}[thm]{Proposition}                  
\theoremstyle{plain}      \newtheorem{conjecture}[thm]{Conjecture}             
\theoremstyle{definition} \newtheorem{rmk}[thm]{Remark}                        
\theoremstyle{definition}                      
\theoremstyle{definition} \newtheorem{df}[thm]{Definition}                     
\theoremstyle{definition}                   
\theoremstyle{definition} \newtheorem{eg}[thm]{Example}                        
\theoremstyle{definition}                        
\theoremstyle{definition}                        
\theoremstyle{definition}                      
\theoremstyle{definition}                    
\theoremstyle{definition}                  
\theoremstyle{definition}                        
\theoremstyle{definition}                       
\theoremstyle{definition}                    
\theoremstyle{definition}                  
\theoremstyle{definition}          
\theoremstyle{definition}              
\theoremstyle{definition}                  
\theoremstyle{definition} \newtheorem{prop-df}[thm]{Proposition-Definition}    

\theoremstyle{definition}
\newtheorem*{construction*}{Construction}                                      
\newtheorem*{conjecture*}{Conjecture}                                          
\newtheorem*{hypothesis*}{Hypothesis}                                          
\newtheorem*{convention*}{Convention}                                          
\newtheorem*{notation*}{Notation}                                              
\newtheorem*{summary*}{Summary}                                                
\newtheorem*{qt*}{Question}                                                    
\newtheorem*{rmk*}{Remark}                                                     
\newtheorem*{fact*}{Fact}                                                      
\newtheorem*{lizi*}{Example}                                                   
\newtheorem*{df*}{Definition}                                                  
\theoremstyle{plain}
\newtheorem*{thm*}{Theorem}                                                    

\crefname{thm}{Theorem}{Theorems}                                              %
\crefname{thme}{Th\'eo\`eme}{Th\'eo\`emes}
\crefname{lem}{Lemma}{Lemmas}
\crefname{lemme}{Lemme}{Lemmes}
\crefname{eg}{Example}{Examples}
\crefname{ege}{Exemple}{Exemples}
\crefname{rmk}{Remark}{Remarks}
\crefname{rmke}{Remarque}{Remarques}
\crefname{cor}{Corollary}{Corollaries}
\crefname{core}{Corollaire}{Corollaires}
\crefname{df}{Definition}{Definitions}
\crefname{dfe}{D\'efinition}{D\'efinitions}
\crefname{question}{Question}{Questions}
\crefname{prop}{Proposition}{Propositions}
\crefname{conjecture}{Conjecture}{Conjectures}


\newcommand{\benum}{\begin{enumerate}[label={{\upshape(\alph*)}}]}             
\newcommand{\benuma}{\begin{enumerate}[label={{\upshape(\arabic*)}}]}          
\newcommand{\benumr}{\begin{enumerate}[label={{\upshape(\roman*)}}]}           
\newcommand{\eenum}{\end{enumerate}}
\newcommand{\bconj}{\begin{conjecture}}
\newcommand{\econj}{\end{conjecture}}
\newcommand{\bconjnn}{\begin{conjecture*}}
\newcommand{\econjnn}{\end{conjecture*}}
\newcommand{\begs}{\begin{eg}\hfill\benuma}                                    
\newcommand{\eegs}{\eenum\end{eg}}                                             
\newcommand{\brmks}{\begin{rmk}\hfill\benuma}                                  
\newcommand{\ermks}{\eenum\end{rmk}}                                           
\newcommand{\bitem}{\begin{itemize}}                                           
\newcommand{\eitem}{\end{itemize}}                                             
\newcommand{\be}{\begin{equation}}                                             
\newcommand{\ee}{\end{equation}}                                               
\newcommand{\benn}{\begin{equation*}}                                          
\newcommand{\eenn}{\end{equation*}}                                            
\newcommand{\bqt}{\begin{qt*}\rm}                                              
\newcommand{\eqt}{\end{qt*}}                                                   
\newcommand{\bqtr}{\begin{qt*}\rm\coLR}                                        
\newcommand{\eqtr}{\end{qt*}}                                                  
\newcommand{\beac}{\begin{equation}\begin{array}{c}}                           
\newcommand{\eeac}{\end{array}\end{equation}}                                  
\newcommand{\beqn}{\begin{eqnarray*}}
\newcommand{\eeqn}{\end{eqnarray*}}
\newcommand{\bdf}{\begin{df}}
\newcommand{\bdfhf}{\begin{df}\hfill}
\newcommand{\edf}{\end{df}}
\newcommand{\brmk}{\begin{rmk}}
\newcommand{\brmkhf}{\begin{rmk}\hfill}
\newcommand{\ermk}{\end{rmk}}


\newcommand{\s}{\mathscr}


   
   \newcommand{\BD}{\mathbf{D}}

   \newcommand{\BL}{\mathbf{L}}

   \newcommand{\BR}{\mathbf{R}}

  
\newcommand{\CC}{\mathcal{C}}

  \newcommand{\CJ}{\mathcal{J}}
  
\newcommand{\CM}{\mathcal{M}}  
\newcommand{\CO}{\mathcal{O}}  
\newcommand{\CQ}{\mathcal{Q}}  
  \newcommand{\CT}{\mathcal{T}}



  
  \newcommand{\BBD}{\mathbb{D}}
  
\newcommand{\BBG}{\mathbb{G}}  \newcommand{\BBH}{\mathbb{H}}
  
\newcommand{\BBK}{\mathbb{K}}

  
  \newcommand{\sD}{\mathscr{D}}


  \newcommand{\rmD}{\mathrm{D}}


\newcommand{\olK}{\overline{K}}

\newcommand{\ulZ}{\underline{\Z}}

\newcommand{\A}{\mathbb{A}}                                                    
\newcommand{\G}{\mathbb{G}}                                                    
\newcommand{\Q}{\mathbb{Q}}                                                    
\newcommand{\Z}{\mathbb{Z}}                                                    
\newcommand{\QZ}{\mathbb{Q}/\mathbb{Z}}                                        
\newcommand{\Qp}{\mathbb{Q}_{p}}                                               
\newcommand{\Zp}{\mathbb{Z}_{p}}                                               
\newcommand{\QZp}{\Qp/\Zp}                                                     
\newcommand{\Ql}{\mathbb{Q}_{\ell}}                                            
\newcommand{\Zl}{\mathbb{Z}_{\ell}}                                            
\newcommand{\QZl}{\Ql/\Zl}                                                     
                                                    %
                                                 %
                                            %

\newcommand{\al}{\alpha}                                                       
\newcommand{\og}{\omega}                                                       

\newcommand{\ra}{\rightarrow}                                                  
\newcommand{\Ra}{\Rightarrow}                                                  
\newcommand{\stra}[1]{\stackrel{#1}{\ra}}


\newcommand{\mt}{\mapsto}                                                      

\newcommand{\coLR}{\textcolor[rgb]{1.00,0,0}}                                  

\newcommand{\ol}{\overline}                                                    
\newcommand{\ul}{\underline}                                                   
\newcommand{\wh}{\widehat}                                                     
\newcommand{\wc}{\widecheck}                                                   

\newcommand{\ce}{\colonequals}                                                 
\newcommand{\es}{\varnothing}                                                  
\newcommand{\ui}{^{-1}}                                                        
\newcommand{\uun}{^{(1)}}                                                      
\newcommand{\lip}{\langle}                                                     
\newcommand{\rip}{\rangle}                                                     


\newcommand{\xm}{\xymatrix}


\newcommand{\drl}{\varinjlim}                                                  
\newcommand{\prl}{\varprojlim}                                                 

\DeclareMathOperator{\Hom}{Hom}                                                
\renewcommand{\hom}{\Hom}                                                      
\DeclareMathOperator{\ext}{Ext}                                                
\DeclareMathOperator{\Ext}{\mathbf{Ext}}                                       
\newcommand{\tstsumlim}{\tst\sum\limits}                                       
\newcommand{\tstprodlim}{\tst\prod\limits}                                     
\newcommand{\tstopslim}{\tst\bigoplus\limits}                                  

\DeclareMathOperator{\ulhom}{\ul{\hom}}                                        
\DeclareMathOperator{\ulext}{\ul{\ext}}                                        
\DeclareMathOperator{\ulpic}{\ul{\pic}}                                        

\DeclareMathOperator{\Div}{Div}                                                
\DeclareMathOperator{\pic}{Pic}                                                
\DeclareMathOperator{\gal}{Gal}                                                
\newcommand{\galalg}[1]{\gal(\ol{{#1}}|{#1})}                                  

\DeclareMathOperator{\br}{Br}                                                  

\DeclareMathOperator{\Image}{Im}                                               
\renewcommand{\Im}{\Image}                                                     
\DeclareMathOperator{\Ker}{Ker}                                                
\renewcommand{\ker}{\Ker}                                                      
\DeclareMathOperator{\cok}{Coker}                                              

\DeclareMathOperator{\dif}{\partial}                                           

\DeclareMathOperator{\e}{Spec}                                                 

\newcommand{\ab}{\mathrm{ab}}                                                  
\newcommand{\alg}{\mathrm{alg}}                                                
\newcommand{\tors}{\mathrm{tors}}                                              
\newcommand{\cts}{\mathrm{cont}}                                               


\renewcommand{\ss}{{\mathrm{ss}}}                                              
\newcommand{\sconn}{\mathrm{sc}}                                               
\newcommand{\torus}{\mathrm{tor}}                                              

\newcommand{\nr}{{\mathrm{nr}}}                                                
\newcommand{\sh}{\mathrm{sh}}                                                  
\newcommand{\rel}{\mathrm{rel}}                                                


\DeclareMathOperator{\ops}{\oplus}                                             
\DeclareMathOperator{\ots}{\otimes}                                            
\DeclareMathOperator{\deots}{\otimes^{\BL}}                                    
\DeclareMathOperator{\dtp}{\otimes^{\BL}}                                      

\DeclareMathOperator{\pr}{pr}                                                  

\newcommand{\gm}{\BBG_m}                                                       
\DeclareMathOperator{\rad}{rad}                                                
\newcommand{\Gss}{G^{\mathrm{ss}}}                                             
\newcommand{\Hss}{H^{\mathrm{ss}}}                                             
\newcommand{\Bss}{B^{\mathrm{ss}}}                                             
\newcommand{\Gsc}{G^{\mathrm{sc}}}                                             
\newcommand{\Hsc}{H^{\mathrm{sc}}}                                             
\newcommand{\Tsc}{T^{\mathrm{sc}}}                                             
\newcommand{\Gtor}{G^{\mathrm{tor}}}                                           
\newcommand{\Htor}{H^{\mathrm{tor}}}                                           
                                                      %
                                                      %

\DeclareMathOperator{\res}{res}                                                


\newcommand{\hnr}{H_{\nr}}                                                     
\newcommand{\munots}[1]{\mu_n^{\ots{#1}}}                                      




\newcommand{\cmdm}{commutative diagram~}
\newcommand{\cmdms}{commutative diagrams~}

\newcommand{\distri}{distinguished triangle~}

\newcommand{\flasres}{flasque resolution~}

\newcommand{\finiexp}{finite exponent~}

\newcommand{\chlg}{cohomology~}

\newcommand{\hchlg}{hypercohomology~}
\newcommand{\hchlgs}{hypercohomologies~}
\newcommand{\hchlgbk}{hypercohomology}
\newcommand{\ingen}{In general,~}
\newcommand{\inpart}{In particular,~}

\newcommand{\ttiff}{if and only if~}
\newcommand{\tths}{homogeneous space~}
\newcommand{\tthss}{homogeneous spaces~}

\newcommand{\ttes}{exact sequence~}

\newcommand{\ses}{short exact sequence~}

\newcommand{\les}{long exact sequence~}

\newcommand{\wa}{weak approximation~}
\newcommand{\wabk}{weak approximation}

\newcommand{\rescores}{restriction-corestriction~}
\newcommand{\wrt}{with respect to~}

\newcommand{\TFAE}{The following are equivalent:~}
\newcommand{\ppair}{perfect pairing~}

\newcommand{\ttai}{homomorphism~}
\newcommand{\ttais}{homomorphisms~}

\newcommand{\alggr}{algebraic group~}

\newcommand{\maxtorus}{maximal torus~}

\newcommand{\ttsssc}{semi-simple simply connected~}


\newcommand{\JLCT}{Colliot-Th\'el\`ene~}
\newcommand{\JLCTbk}{Colliot-Th\'el\`ene}

\newcommand{\HSerre}{Hochschild--Serre~}
\newcommand{\Kumseq}{Kummer sequence~}

\newcommand{\ArtVer}{Artin--Verdier~}                                          
\newcommand{\TateSha}{Tate--Shafarevich~}                                      


\newcommand{\qand}{\quad\ \textup{and}\quad\ }                                 

\newcommand{\itm}{\item}
\newcommand{\tst}{\textstyle}

\newcommand{\pff}{\proof\hfill}


\begin{document}
\title{\textbf{Obstructions to weak approximation for reductive groups over $p$-adic function fields}}

\author{Yisheng TIAN}


\maketitle

\begin{abstract}
We establish arithmetic duality theorems for short complexes of tori associated to reductive groups over $p$-adic function fields. Using arithmetic dualities,
we deduce obstructions to weak approximation for certain reductive groups (especially quasi-split ones)
and relate this obstruction to an unramified Galois cohomology group.
\end{abstract}

\section*{Introduction}
This article is a subsequent work of \cite{HSSz15} on the investigation of weak approximation for a connected reductive group under certain assumptions that hold for quasi-split connected reductive groups and tori.
Let $X$ be a smooth projective and geometrically integral curve defined over some $p$-adic field $k$ and let $K$ be the function field of $X$.
Since each closed point $v\in X^{(1)}$ induces a discrete valuation of $K$ (recall that the local ring $\CO_{X,v}$ is a $1$-dimensional regular local ring, hence a discrete valuation ring), the completion $K_v$ of $K$ with respect to $v$ makes sense and hence we can ask typical questions concerning the arithmetic of algebraic $K$-groups.
For example, Harari and Szamuely studied the cohomological obstruction to the kernel of $H^1(K,G)\to\prod_{v\in X^{(1)}} H^1(K_v,G)$ being trivial in \cite{HSz16}*{Section 6} for connected linear reductive groups.
Also, Harari, Scheiderer and Szamuely described the closure of $T(K)$ in $\prod_{v\in X^{(1)}} T(K_v)$ \wrt the product of $v$-adic topologies for a $K$-torus $T$ in \cite{HSSz15}.
We want to extend the results of \cite{HSSz15} to a certain class of connected linear reductive groups and to \tthss over $K$ under such groups.

\vspace{1em}
The first main step is to establish global duality for the \TateSha group of short complexes of $K$-tori.

\begin{thm*}
Let $C=[T_1\to T_2]$ be an arbitrary complex of $K$-tori concentrated in degree $-1$ and $0$.
Let $T_1'$ and $T_2'$ be the respective dual torus of $T_1$ and $T_2$, and let $C'=[T_2'\to T_1']$.
Let $\Sha^1(C)\ce\ker\big(\BBH^1(K,C)\to\prod_{v\in X^{(1)}} \BBH^1(K_v,C)\big)$ be the \TateSha group of the complex $C$.
There is a perfect, functorial in $C$, pairing of finite groups:
$$\Sha^1(C)\times \Sha^1(C')\to \QZ.$$
\end{thm*}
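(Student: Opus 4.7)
The plan is to extract the duality from a $12$-term Poitou--Tate-style exact sequence for $C$, obtained by d\'evissage from the torus case already available in \cite{HSSz15}. The pairing itself will come from cup product into $H^3(K, \gm)$ composed with the reciprocity map furnished by the sum of local invariants at the closed points of $X$.

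The first step is local duality. Since $K_v$ is a complete discretely valued field whose residue field $\kappa(v)$ is a $p$-adic field, it is a $2$-local field of cohomological dimension $3$, and admits a fundamental class $\inv_v \colon H^3(K_v, \gm) \isoto \QZ$. Cup product with $\inv_v$ together with local duality for tori (available via \cite{HSSz15}) yields, by the five-lemma applied to the distinguished triangle $T_1[1] \to C \to T_2 \xra{+1}$ and its analogue for $C'$, a perfect pairing of locally compact groups
$$\BBH^i(K_v, C) \times \BBH^{1-i}(K_v, C') \to \QZ$$
for every $i \in \Z$.

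On the global side, I would first show that $\Sha^1(C)$ and $\Sha^1(C')$ are finite. Spreading $C$ out to a complex $\CC = [\mathcal{T}_1 \to \mathcal{T}_2]$ over a dense open $U \subset X$ on which both $T_i$ acquire smooth $U$-torus models, one sees that $\Sha^1(C)$ is a subquotient of $\BBH^1(U, \CC)$ after shrinking $U$, and the latter is finite by an Artin--Verdier type argument on a regular proper model of $X$ over $\Spec \CO_k$. The global pairing is then constructed by the cup product
$$\BBH^1(K, C) \times \BBH^1(K, C') \to H^3(K, \gm)$$
followed by the sum of local invariants at $v \in X^{(1)}$. On Tate--Shafarevich classes all local restrictions vanish, so the value lies in the kernel of $H^3(K, \gm) \to \prod_v H^3(K_v, \gm)$, which maps to $\QZ$ by the reciprocity sequence for the $p$-adic function field $K$.

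Non-degeneracy will follow from a $12$-term Poitou--Tate exact sequence for $C$, which I would deduce by applying the torus version of \cite{HSSz15} to $T_1$ and $T_2$ and then running a five-lemma chase along the two distinguished triangles defining $C$ and $C'$. The pairing on $\Sha^1(C) \times \Sha^1(C')$ reads off from the relevant segment as in the classical Poitou--Tate setup. The main technical obstacle will be the $12$-term sequence itself: one must verify that the connecting maps coming from the two triangles are compatible with local duality with the correct signs, and that the Poitou--Tate restricted products of local hypercohomology groups behave well under d\'evissage. The reversed positions of the dual tori in $C' = [T_2' \to T_1']$ force a degree shift that has to be tracked carefully throughout.
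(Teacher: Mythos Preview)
Your proposal has several genuine gaps.

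First, the target of the global cup product is wrong: the natural pairing is $C\dtp C'\to\Z(2)[3]$, so weight two is involved, and the local invariant is $H^3(K_v,\QZ(2))\simeq\QZ$, not $H^3(K_v,\gm)$. More seriously, your definition of the pairing on $\Sha^1\times\Sha^1$ does not make sense as written: the global cup product $\BBH^1(K,C)\times\BBH^1(K,C')$ lands in $H^5(K,\Z(2))\simeq H^4(K,\QZ(2))$, which vanishes because $\cd K=3$. There is no ``kernel of the localization map, which maps to $\QZ$ by reciprocity'' to exploit here. The pairing has to be constructed indirectly, via Artin--Verdier duality on non-empty opens $U\subset X$ between $\BBH^i(U,\CC)$ and the compact-support group $\BBH^{2-i}_c(U,\CC')$, and then extracted at the level of $\Sha^1$ by a limit argument.

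Second, your finiteness argument fails: $\BBH^1(U,\CC)$ is \emph{not} finite in general (already for $C=[0\to\gm]$ one gets $\Pic(U)$). The paper's route is more delicate: one shows that the image $\BBD^1_K(U,\CC)=\Im\big(\BBH^1_c(U,\CC)\to\BBH^1(K,C)\big)$ has finite exponent (this requires a Brauer-group computation on the curve over the $p$-adic base, \cref{result: D-2(Gm) finite exponent}), hence is finite; one then proves this image stabilizes to $\Sha^1(C)$ as $U$ shrinks.

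Third, and most fundamentally, your route to non-degeneracy is circular. A $12$-term Poitou--Tate sequence for $C$ over $p$-adic function fields is not available in \cite{HSSz15} or \cite{HSz16}; the paper explicitly announces (end of the introduction) that such a sequence will be derived in later work \emph{from} the duality proved here. Nor can a five-lemma d\'evissage from the torus case produce it: the torus duality pairs $\Sha^1(T)$ with $\Sha^2(T')$, so the shift in $C'=[T_2'\to T_1']$ mixes the $\Sha^1$--$\Sha^2$ and $\Sha^2$--$\Sha^1$ dualities for $T_1$ and $T_2$ in a way that does not assemble into a statement about $\Sha^1(C)$ by a diagram chase. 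The paper instead proves non-degeneracy $\ell$-prime by $\ell$-prime: the key input is an exact sequence
\[
\bigoplus_{v\in X^{(1)}}\BBH^0(K_v^h,C)\to\BBH^1_c(U,\CC)\to\BBD^1_K(U,\CC)\to 0
\]
(whose exactness needs a Grothendieck--Serre-type injectivity for complexes of tori over Henselian local rings, \cref{result: GS conj for complex of tori}); one takes $\ell$-adic completions, dualizes, and compares with the defining sequence for $\Sha^1(C)$ via local duality to obtain an injection $\Sha^1(C)\{\ell\}\hookrightarrow\Sha^1(C')\{\ell\}^D$, then concludes by symmetry.
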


The development of the global duality theorem is parallel to Izquierdo's work \cite{Diego-these} where he considered duality theorems for groups of multiplicative type over higher local fields.
Recall that a group $M$ of multiplicative type may be identified with the kernel of an epimorphism $T_1\to T_2$ of tori over the base field.
As a consequence, one may use the complex $[T_1\to T_2]$ to describe the arithmetic of $M$ and part of the dualities established by Izquierdo \cite{Diego-these} is based on the surjectivity of $T_1\to T_2$.
In our context, we get rid of the surjectivity assumption on $T_1\to T_2$ but the price is to restrict ourselves to $p$-adic function fields (which are of cohomological dimension $3$).
This refinement is important in our situation since the short complex of tori associated with a connected reductive group (see the paragraph below) is not an epimorphism in general.
Finally, we recall that just analogous to global duality results between $\Sha^1(C)$ and $\Sha^1(\wh{C})$ (where $\wh{C}=[\wh{T_2}\to \wh{T_1}]$) over number fields \cite{Dem11}*{Th\'eor\`eme 5.7}, we do not need the finiteness of $\ker(T_1\to T_2)$.

\vspace{1em}
Now we consider a connected reductive linear group $G$ over the function field $K$.
Following Deligne \cite{Deligne79}*{2.4.7} and Borovoi \cite{Bor98}, we consider the composite $\rho:\Gsc\to G^{\ss}\to G$, where $G^{\ss}=\s{D}G$ is the derived subgroup of $G$ (it is semi-simple) and $\Gsc\to G^{\ss}$ is the simply connected covering of $G^{\ss}$ (it is simply connected).
For a \maxtorus $T$ of $G$, its inverse image $\Tsc \ce \rho\ui(T)$ is a \maxtorus of $\Gsc$.
To each reductive group $G$, we associate a short complex $C=[\Tsc \to T]$ of $K$-tori concentrated in degree $-1$ and $0$.
The next result says that in general there is an obstruction to weak approximation for $G$ which is controlled by some sort of \TateSha group of $C'$.

We fix here a technical condition which is satisfied by quasi-split reductive $K$-groups (see \cref{result: quasi-split groups have *}) and $K$-tori.

\begin{df*}
We say $\Gsc$ \emph{satisfies $(*)$} if it satisfies weak approximation
and it contains a quasi-trivial maximal torus.
\end{df*}

\begin{thm*}
Let $\Sha^1_{\og}(C')$ be the group of elements in $\BBH^1(K,C')$ being trivial in $\BBH^1(K_v,C')$ for all but finitely many $v\in X^{(1)}$ and let $\Sha^1_{\og}(C')^D$ be the group of \ttais $\Sha^1_{\og}(C')\to \QZ$.
Suppose $G$ is a connected reductive group such that $\Gsc$ satisfies $(*)$.
There is an exact sequence of groups
\beac\label{sequence: OBS to WA in intro}
  1\to \ol{G(K)}\to \prod_{v\in X^{(1)}}G(K_v)\to \Sha^1_{\og}(C')^D\to \Sha^{1}(C)\to 1.
\eeac
Here $\ol{G(K)}$ denotes the closure of the diagonal image of $G(K)$ in $\prod_{v\in X^{(1)}}G(K_v)$ for the product of $v$-adic topologies.
\end{thm*}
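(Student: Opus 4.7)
The plan is to first reduce the statement for $G$ to an analogous Poitou--Tate type exact sequence for the short complex $C=[\Tsc\to T]$, exploiting Borovoi's abelianization morphism together with hypothesis $(*)$; and then to establish that exact sequence for $C$ via the global duality theorem above combined with local Tate duality at each $v\in X^{(1)}$.

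\textbf{Reduction to $C$.} At each place $v$ one has a Borovoi-type long exact sequence
$$\Gsc(K_v)\ra G(K_v)\xra{\ab}\BBH^0(K_v,C)\ra H^1(K_v,\Gsc)\ra\cdots$$
and likewise over $K$. Hypothesis $(*)$ on $\Gsc$ enters in two ways: weak approximation for $\Gsc$ forces $\prod_v \Gsc(K_v)=\ol{\Gsc(K)}$, so that kernels of the abelianization maps are well-controlled in the product topology; and the quasi-trivial maximal torus in $\Gsc$ permits a Hilbert~$90$ style computation of the $H^1(\Gsc)$ terms at both the local and global places. Combining these, the closure $\ol{G(K)}\subseteq\prod_v G(K_v)$ is identified with the preimage under $\prod_v\ab$ of the closure of the image of $\BBH^0(K,C)\ra\prod_v\BBH^0(K_v,C)$. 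It therefore suffices to establish the sequence
$$1\ra\ol{\BBH^0(K,C)}\ra\prod_v\BBH^0(K_v,C)\ra\Sha^1_\og(C')^D\ra\Sha^1(C)\ra 1.$$

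\textbf{Pairing, reciprocity, and cokernel.} Summing the local Tate pairings $\BBH^0(K_v,C)\times\BBH^1(K_v,C')\ra\QZ$ defines a continuous pairing $\Psi:\prod_v\BBH^0(K_v,C)\times \Sha^1_\og(C')\ra\QZ$, well-defined because $\alpha\in\Sha^1_\og(C')$ is locally trivial at almost all $v$. Global reciprocity (sum-of-invariants vanishes on diagonal classes) and continuity imply $\Psi$ kills $\ol{\BBH^0(K,C)}$. The induced map $\prod_v\BBH^0(K_v,C)\ra\Sha^1_\og(C')^D$ lands in the subgroup of functionals vanishing on $\Sha^1(C')\hra\Sha^1_\og(C')$, since an element of $\Sha^1(C')$ has trivial local components at every $v$. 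Dualising this inclusion and invoking the global duality theorem $\Sha^1(C')^D\cong\Sha^1(C)$ supplies the surjection on the right. Exactness at $\Sha^1_\og(C')^D$ is a Pontryagin duality argument: a functional annihilating $\Sha^1(C')$ descends to $\Sha^1_\og(C')/\Sha^1(C')\hra\bigoplus_v\BBH^1(K_v,C')$, extends to the latter (injectivity of $\QZ$), and via local duality is realised as $\Psi((x_v),\cdot)$ for some $(x_v)\in\prod_v\BBH^0(K_v,C)$.

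\textbf{Kernel equals closure (main obstacle).} The technical heart is to show that the radical of $\Psi$ is exactly $\ol{\BBH^0(K,C)}$. Given $(x_v)$ in the radical, a finite set $S\subseteq X^{(1)}$ and open neighbourhoods $U_v\ni x_v$ for $v\in S$, one must produce a global class approximating the $x_v$ on $S$. Arguing by contraposition, if no such class exists then a Hahn--Banach type separation in the locally compact group $\prod_{v\in S}\BBH^0(K_v,C)$ yields a continuous character vanishing on the image of $\BBH^0(K,C)$. Local duality at $v\in S$ translates this character into a collection of classes $\beta_v\in\BBH^1(K_v,C')$, zero outside $S$. The crucial point is then to show that $(\beta_v)$ globalises to an $\alpha\in\Sha^1_\og(C')$ pairing nontrivially with $(x_v)$: this is achieved by invoking the perfect pairing of the global duality theorem and exploiting the cohomological finiteness available for $p$-adic function fields ($\cd=3$), which controls the local-to-global obstruction at the places outside $S$ via unramified duality. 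The resulting contradiction establishes the exact sequence for $C$, and together with the reduction step it proves the theorem for $G$.
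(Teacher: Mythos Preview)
Your overall plan is plausible, but the route diverges from the paper's and contains a genuine gap at the step you yourself flag as the ``crucial point.''

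\textbf{How the paper proceeds.} The paper does \emph{not} reduce directly to an exact sequence for $\BBH^0(-,C)$. Instead it follows Sansuc: by \cref{lemma: to reduce main result to special covering case} one may assume $G$ admits a special covering $1\to F_0\to G_0\to G\to 1$ with $G_0=\Gsc\times Q_0$ satisfying weak approximation and containing a quasi-trivial maximal torus $T_0\supset F_0$. Then exactness at the first three terms (for finite $S$) is a diagram chase (\cref{lemma: main diagram chasing}) using (i) weak approximation for $G_0$, (ii) surjectivity of the coboundary $G(K)\to H^1(K,F_0)$ (since it factors through $H^1(K,T_0)=0$), and (iii) the known Poitou--Tate sequence $H^1(K,F_0)\to\prod_{v\in S}H^1(K_v,F_0)\to\Sha^2_S(F_0')^D$ for the \emph{finite} module $F_0$ from \cite{HSSz15}. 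Exactness at the last three terms then follows by dualizing the tautological sequence $1\to\Sha^1(C')\to\Sha^1_S(C')\to\bigoplus_{v\in S}\BBH^1(K_v,C')$ and using that $\Sha^1_S(C')\simeq\Sha^2_S(T')$ is finite, so $\prod_{v\in S}\BBH^0(K_v,C)$ and its profinite completion have the same image in $\Sha^1_S(C')^D$. Part (2) is obtained by passing to the projective limit over $S$, with closedness of the image in $\Sha^1_\og(C')^D$ coming from compactness of $\prod_v H^1(K_v,F_0)/\ol{H^1(K,F_0)}$.

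\textbf{The gap in your argument.} In your ``kernel equals closure'' step, after separating $(x_v)$ from the global image by a character and translating via local duality into classes $\beta_v\in\BBH^1(K_v,C')$ supported on $S$, you assert that $(\beta_v)$ globalises to some $\alpha\in\Sha^1_\og(C')$. This is precisely the exactness of
\[
\BBH^1(K,C')\to\bigoplus_{v\in X^{(1)}}\BBH^1(K_v,C')\to\big(\BBH^0(K,C)\big)^D
\]
at the middle term, i.e.\ a piece of the Poitou--Tate sequence for the complex $C'$. That statement is \emph{not} available here: the paper establishes only the global duality $\Sha^1(C)\times\Sha^1(C')\to\QZ$, and explicitly defers the full Poitou--Tate sequence for complexes of tori to a subsequent paper. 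Your appeal to ``global duality and cohomological finiteness'' does not supply this; in particular, vanishing on the diagonal image of $\BBH^0(K,C)$ does not by itself force a collection of local classes to be global. The paper circumvents exactly this difficulty by passing to the finite module $F_0$, where the required Poitou--Tate input is already known.

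\textbf{A smaller issue.} In your argument for exactness at $\Sha^1_\og(C')^D$, extending the functional to $\bigoplus_v\BBH^1(K_v,C')$ and applying local duality produces an element of $\prod_v\BBH^0(K_v,C)^\wedge$, not of $\prod_v\BBH^0(K_v,C)$. The paper handles this for finite $S$ by the finiteness of $\Sha^1_S(C')$, and for the full product via a compactness argument that again uses $F_0$; you would need to address this as well.
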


Note that unlike the case of number field $($see Kneser \cites{Kneser62,Kneser65-SA}, Harder \cite{Harder68} and \cite{PR94}*{Proposition 7.9}$)$,
currently it is not known that semi-simple simply connected groups verify \wa over $p$-adic function fields.
However, thanks to a theorem of Th\u{a}\'{n}g \cite{Tha96}*{Theorem 1.4}, quasi-split semi-simple simply connected groups do have \wabk.

We shall see in the sequel that condition $(*)$ enables us to conclude certain maps are surjective
(see \cref{result: ab-0 surj for qs-group} and \cref{lemma: main diagram chasing}, diagram (\ref{diagram: main diagram chasing}) as well).
Let us briefly indicate other reasons why we did not manage to drop the assumption that $\Gsc$ satisfies $(*)$.

\bitem
\item
Consider a \emph{quasi-trivial} connected reductive group $H$ (its derived subgroup $\Hsc\ce \sD H$ is simply connected and the maximal toric quotient $\Htor$ is quasi-trivial) over a field $L$ of arithmetic type such that $\Hsc$ satisfies \wabk.
If $L$ is a number field, then we know $H^1(L_v,\Hsc)=1$ for non-Archimedean places $v$
and $H^1(L,\Hsc)\simeq \prod_{v|\infty}H^1(L_v,\Hsc)$ by \cite{PR94}*{Theorem 6.4 and 6.6}.
Subsequently the method of Sansuc \cite{San81} (see also the proof of \cref{lemma: main diagram chasing}) implies that $H$ satisfies \wa as well.
But if $L$ is a $p$-adic function field, we do not have the vanishing of $H^1(L_v,\Hsc)$ for all but finitely many places
and so it is not clear that $H$ satisfies \wabk.

\item
Suppose $G$ is semi-simple,
then there is an \ttes $1\to F\to \Gsc\to G\to 1$ with $F$ being a commutative finite \'etale group scheme.
In this case, Sansuc's method does not give a defect to \wa for $G$ for the same reason.
\eitem

The \ttes (\ref{sequence: OBS to WA in intro}) tells us that the group $\Sha^1_{\og}(C')^D$ can be viewed as a defect of \wa for the group $G$.
Actually, we may rephrase the \ttes (\ref{sequence: OBS to WA in intro}) in terms of the reciprocity obstruction to \wabk.
More precisely, there is a pairing which annihilates the closure of the diagonal image of $G(K)$ on the left:
\be\label{pairing: reciprocity obs pairing}
  (-,-):\tstprodlim_{v\in X^{(1)}}G(K_v)\times H_{\nr}^3(K(G),\QZ(2))\to \QZ.
\ee
See \cite{CT95SBB}*{\S4.1} for general definitions and properties of $H_{\nr}^3(K(G),\QZ(2))$ and
see \cite{HSSz15}*{pp. 18, pairing (17)} for the construction of the pairing (\ref{pairing: reciprocity obs pairing}) above.

\begin{thm*}
Let $G$ be a connected reductive group over $K$ such that $\Gsc$ satisfies $(*)$.
There exists a \ttai
$$u:\Sha^1_{\og}(C')\to H^3_{\nr}(K(G),\QZ(2))$$
such that each family $(g_v)\in\prod_{v\in X^{(1)}}G(K_v)$ satisfying $((g_v),\Im u)=0$ under the pairing $(\ref{pairing: reciprocity obs pairing})$ lies in the closure $\ol{G(K)}$ with respect to the product topology.

More precisely,
the obstruction is given by
$\Im\big(H^3(G^c,\mu_n^{\ots2})\to \hnr^3(K(G),\QZ(2))\big)$
for some sufficiently large $n$.
\end{thm*}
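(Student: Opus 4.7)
The plan is to combine the exact sequence (\ref{sequence: OBS to WA in intro}) from the previous theorem with an explicit cohomological realisation of the abstract pairing $\langle -,-\rangle: \Sha^1_{\og}(C') \times \prod_{v\in X^{(1)}} G(K_v) \to \QZ$ coming from the arrow $\prod_v G(K_v) \to \Sha^1_{\og}(C')^D$. Since that sequence tells us $(g_v) \in \ol{G(K)}$ exactly when $\langle \alpha, (g_v)\rangle = 0$ for every $\alpha$, it suffices to exhibit $u: \Sha^1_{\og}(C') \to \hnr^3(K(G), \QZ(2))$ together with the identity $((g_v), u(\alpha)) = \langle \alpha, (g_v)\rangle$, where $(-,-)$ denotes the pairing (\ref{pairing: reciprocity obs pairing}).

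To construct $u$, I would use Borovoi's abelianization $\mathrm{ab}: H^1(-, G) \to \BBH^1(-, C)$, which is functorial in $K$-algebras. Applied to the tautological generic $G$-torsor over $K(G)$, obtained from the generic point $\xi_G \in G(K(G))$, it produces a canonical ``universal abelianization'' class $\eta_G \in \BBH^1(K(G), C)$. Given $\alpha \in \Sha^1_{\og}(C') \subseteq \BBH^1(K, C')$, I pull $\alpha$ back to $\BBH^1(K(G), C')$ and cup with $\eta_G$ via the duality pairing $C \deots C' \to \Z(2)[2]$ (the same pairing underlying the first theorem of the introduction). This produces a class $u(\alpha) \in H^3(K(G), \QZ(2))$.

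To show that $u(\alpha)$ is unramified, I would spread $G$ out to a smooth reductive group scheme $\CG$ over a dense open $U \subseteq X$, at which point both $\eta_G$ and $\alpha$ acquire natural integral models that are locally trivial at almost every $v$; a residue analysis at codimension-$1$ points of $\CG$, combined with purity for smooth projective varieties on a smooth compactification $G^c$ of $G$, then places $u(\alpha)$ in $\hnr^3(K(G), \QZ(2))$. Since $\Sha^1_{\og}(C')$ is killed by some integer $n$, $\Image u$ lies in the $n$-torsion of $\hnr^3(K(G), \QZ(2))$; Bloch--Ogus purity on the smooth projective variety $G^c$ then identifies this $n$-torsion with the image of $H^3(G^c, \munots{2})$, giving the ``more precisely'' assertion.

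Finally, the compatibility $((g_v), u(\alpha)) = \langle \alpha, (g_v)\rangle$ follows from functoriality of cup products: specialising $\eta_G$ along a $K_v$-point $g_v: \Spec K_v \to G$ recovers the local abelianization $\mathrm{ab}(g_v) \in \BBH^0(K_v, C)$, so $g_v^* u(\alpha) = \mathrm{ab}(g_v) \cup \alpha|_{K_v} \in H^3(K_v, \QZ(2))$, which is exactly the local piece used to define $\langle -, -\rangle$. Summation over $v$ and the reciprocity law built into (\ref{pairing: reciprocity obs pairing}) reproduce both pairings simultaneously. The main obstacle I expect is the unramifiedness verification of Step 3: extending unramifiedness from the good open $U$-locus of $\CG$ to every codimension-$1$ point of $G^c$, including the boundary divisors $G^c \setminus G$ and bad vertical fibres over $X \setminus U$, demands a delicate residue calculation. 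It is precisely here that condition $(*)$ on $\Gsc$ (existence of a quasi-trivial maximal torus, together with weak approximation for $\Gsc$) must enter to kill the residual obstructions in the connecting arrows between the $\Gsc$-part and the toric part of $G$.
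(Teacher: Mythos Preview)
Your strategy diverges from the paper's in a way that creates a real gap, and there are also some indexing errors worth flagging.

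First, a degree slip: the generic point $\xi_G\in G(K(G))$ is a rational point, so $\mathrm{ab}^0(\xi_G)$ lands in $\BBH^0(K(G),C)$, not $\BBH^1$; likewise the pairing in the paper is $C\dtp C'\to\Z(2)[3]$, not $\Z(2)[2]$. With these corrected, cupping $\mathrm{ab}^0(\xi_G)\in\BBH^0(K(G),C)$ with $\alpha\in\BBH^1(K(G),C')$ does produce a class in $H^4(K(G),\Z(2))\simeq H^3(K(G),\QZ(2))$, and your specialisation argument for compatibility with the abstract pairing is then correct.

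The substantive gap is unramifiedness. The paper does \emph{not} attempt a residue computation. Instead it passes through a flasque resolution $1\to R\to H\to G\to 1$: because $\Tsc$ is quasi-trivial one has $\Sha^1_\omega(C')\simeq\Sha^2_\omega(T')\simeq\Sha^1_\omega(R')\subset H^1(K,R')$, and because $R$ is \emph{flasque} the torsor class $[H]\in H^1(G,R)$ extends to a class $[Y]\in H^1(G^c,R)$ on a smooth compactification (Colliot-Th\'el\`ene--Sansuc). The map $u$ is then $a\mapsto a_{G^c}\cup[Y]\in H^4(G^c,\Z(2))\to\hnr^3(K(G),\QZ(2))$, and unramifiedness is automatic since the class is born on $G^c$. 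Your proposed ``spreading out over $U\subset X$ plus residue analysis at boundary divisors of $G^c$'' has no clear mechanism: the abelianized generic point $\mathrm{ab}^0(\xi_G)$ has no evident extension to $G^c$, and nothing in your outline replaces the flasqueness of $R$, which is precisely the input that controls extension across the boundary.

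Relatedly, you misplace the role of condition~$(*)$. It is \emph{not} used for unramifiedness; it is used (i) upstream, to obtain the exact sequence you invoke, and (ii) here, to make $\Tsc$ quasi-trivial so that $\Sha^1_\omega(C')\simeq\Sha^1_\omega(R')$ and so that $H^0(K_v,T)\to\BBH^0(K_v,C)$ is surjective, allowing the reduction to cup-products with $R'$.

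Finally, your ``more precisely'' argument is not valid: Bloch--Ogus gives a map $H^3(G^c,\munots{2})\to\hnr^3(K(G),\munots{2})$, but it is not surjective in general, so knowing $u(\alpha)$ is $n$-torsion does not place it in that image. The paper instead constructs the class directly in $H^3(G^c,\mu_N^{\ots2})$ by lifting $a\in H^1(K,R')$ to $H^1(K,{_N}R')$ and pushing $[Y]$ to $H^2(G^c,{_N}R)$ via Kummer, then cupping.
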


As some sort of complement to the present paper, we will establish global duality between $\Sha^0(C)$ and $\Sha^2(C')$, which enables one to construct a $12$-term (resp. $15$-term) Poitou--Tate \ttes of topological abelian groups associated to the complex $C=[T_1\to T_2]$ (resp. $C\dtp\Z/n$) in an upcoming paper.

\vspace{1em}
\textbf{Acknowledgements}.
This work was done under the supervision of David Harari.
I thank him for many helpful discussions and suggestions, and also for his support and patience.
I thank Jean-Louis \JLCT for valuable comments.
I appreciate the EDMH doctoral program for support and Universit\'e Paris-Sud for excellent conditions for research.

\section*{Notation and conventions}
Unless otherwise stated, all (hyper)cohomology groups will be taken with respect to the \'etale topology.
\inpart (hyper)cohomology groups over fields are identified with Galois (hyper)cohomology groups.
We fix once and for all an algebraic closure $\ol{L}$ of a field $L$ of characteristic zero.
In the sequel, \emph{varieties} over $L$ will always mean separated schemes of finite type over $L$.

\textbf{Abelian groups}.
Let $A$ be an abelian group.
We shall denote by $_nA$ (resp. $A\{\ell\}$) for the $n$-torsion subgroup (resp. $\ell$-primary subgroup with $\ell$ prime) of $A$.
Moreover, let $A_{\tors}$ be the torsion subgroup of $A$, so $A_{\tors}=\drl {_n}A$ is the direct limit of $n$-torsion subgroups of $A$.
We write $A^{\wedge}$ for the profinite completion of $A$ (that is, the inverse limit of its finite quotients),
$A_{\wedge}\ce \prl A/nA$ and
$A^{(\ell)}\ce\prl A/\ell^n$ for the $\ell$-adic completion with $\ell$ a prime number.
A torsion abelian group $A$ is of \textit{cofinite type} if $_nA$ is finite for each $n\ge 1$.
If $A$ is $\ell$-primary torsion of cofinite type, then $A/\Div A\simeq A^{(\ell)}$ where the former group is the quotient of $A$ by its maximal divisible subgroup.
For a topological abelian group $A$, we write $A^D\ce \hom_{\cts}(A,\QZ)$ for the group of continuous  homomorphisms.

\textbf{Tori}.
We write $\wh{T}$ (resp. $\wc{T}$) for the character module (resp. cocharacter module) of a $L$-torus $T$.
These are finitely generated free abelian groups endowed with a $\gal(\ol{L}|L)$-action, and moreover $\wc{T}$ is the $\Z$-linear dual of $\wh{T}$.
The \textit{dual torus} $T'$ of $T$ is the torus with character group $\wc{T}$,
that is, $\wh{T'}=\wc{T}$ as Galois modules.
We say a torus $T$ is \textit{flasque} if $H^1(L',\wc{T})=0$ for each finite extension $L'|L$ contained in $\ol{L}$.
A torus $T$ is \textit{quasi-trivial} if $\wh{T}$ admits a $\galalg{L}$-invariant $\Z$-basis.
Equivalently, $T$ is \textit{quasi-trivial} if $T\simeq\prod R_{L_i|L}\gm$
for some finite extensions $L_i|L$ contained in $\ol{L}$, where the $R_{L_i|L}$ are various Weil restrictions.

\textbf{Linear algebraic groups and homogeneous spaces}.
Let $H$ be an \alggr defined over $L$ and let $Y$ be a smooth $L$-scheme endowed with an $H$-action.
Then $Y$ is called a \emph{\tths} under $H$ if the $H(\ol{L})$-action on $Y(\ol{L})$ is transitive.
A \tths $Y$ under $H$ is a \emph{$K$-torsor} if the action is simply transitive.
By definition, \textit{reductive} algebraic groups will mean connected reductive groups.
If $H$ is reductive, then we denote $\Hss\ce \s{D}H$ for its derived subgroup (it is semi-simple).
Let $\Hsc\to \Hss$ be the universal covering of $H^{\ss}$ (it is a finite covering) with $\Hsc$ being simply connected.
Finally, we say $H$ is \emph{quasi-split} if it contains a Borel subgroup defined over the base field $L$.
Equivalently, $H$ is quasi-split if and only if some parabolic subgroup is solvable (see \cite{SGA3III}*{Section 3.9}, or \cite{Milne17}*{Chapter 17, Section I}).

\textbf{Fundamental diagram associated to a flasque resolution}.
Let $G$ be a connected reductive group over $L$.
By \cite{CT08-resolution-flasque}, there is an exact sequence of connected reductive groups:
$$1\to R\to H\to G\to 1$$
called a \emph{flasque resolution} of $G$,
where $H$ is a quasi-trivial connected reductive group which is an extension of a quasi-trivial torus by $\Gsc$,
and $R$ is a flasque $L$-torus which is central in $H$.
By \cite{CT08-resolution-flasque}*{0.3}, $H$ is an extension of the quasi-trivial torus $\Htor$ by $\Gsc$, so we obtain an identification $\Hss\simeq\Gsc$.

Recall \cite{CT08-resolution-flasque}*{pp. 94} that there is a \cmdm associated with such a flasque resolution
\beac\label{diagram: fundamental diagram associated to a flasque resolution: groups}
  \xymatrix@R=15pt{
  & 1\ar[d] & 1\ar[d] & 1\ar[d] \\
  1\ar[r] & F\ar[r]\ar[d] & \Gsc\ar[r]\ar[d] & G^{\ss}\ar[r]\ar[d] & 1\\
  1\ar[r] & R\ar[r]\ar[d] & H\ar[r]\ar[d] & G\ar[r]\ar[d] & 1\\
  1\ar[r] & M\ar[r]\ar[d] & H^{\torus}\ar[r]\ar[d] & G^{\torus}\ar[r]\ar[d] & 1\\
  & 1 & 1 & 1
  }
\eeac
with exact rows and columns.
In the bottom row of diagram (\ref{diagram: fundamental diagram associated to a flasque resolution: groups}),
$\Gtor$ and $\Htor$ are respective maximal toric quotient of $G$ and $H$,
and kernel $M$ of $H^{\torus}\to G^{\torus}$ is a group of multiplicative type.
Finally, $F$ is the kernel of $R\to H^{\torus}$.
As the kernel of $\Gsc\to G^{\ss}$, $F$ is finite and central in $\Gsc$.
It follows that $M$ is a torus, as a quotient of the torus $R$ by the finite group $F$.

Let $\rho:\Gsc\to G^{\ss}\to G$ be the composition and let $T\subset G$ be a maximal $L$-torus.
Then $\Tsc \ce \rho\ui(T)$ is a \maxtorus of $\Gsc$.
Applying \cite{CT08-resolution-flasque}*{Appendice A} to the \maxtorus $T$, we obtain a \cmdm
\beac\label{diagram: fundamental diagram associated to a flasque resolution: maixmal tori}
  \xymatrix@R=15pt{
  & 1\ar[d] & 1\ar[d] & 1\ar[d] \\
  1\ar[r] & F\ar[r]\ar[d] & \Tsc \ar[r]\ar[d] & T\cap G^{\ss}\ar[r]\ar[d] & 1\\
  1\ar[r] & R\ar[r]\ar[d] & T_H\ar[r]\ar[d] & T\ar[r]\ar[d] & 1\\
  1\ar[r] & M\ar[r]\ar[d] & H^{\torus}\ar[r]\ar[d] & G^{\torus}\ar[r]\ar[d] & 1\\
  & 1 & 1 & 1
  }
\eeac
with exact rows and columns, where $T_H\subset H$ is a \maxtorus of $H$.
Recall that $H^{\torus}$ is a quasi-trivial torus.


\textbf{Special coverings}.
An isogeny $G_0\to G$ of connected reductive $L$-groups is called a \emph{special covering}
(see \cite{San81}) if $G_0$ is the product of a \ttsssc group and a quasi-trivial torus.
For each reductive $L$-group $G$, there exist an integer $m\ge 1$ and a quasi-trivial $L$-torus $Q$ such that $G^m\times_L Q$ admits a special covering by \cite{San81}*{Lemme 1.10}.

\textbf{Motivic complexes}.
Let $Y$ be a variety over $L$.
Bloch introduced a so-called cycle complex $z^i(Y,\bullet)$ in \cite{Bloch86}.
When $Y$ is smooth, we denote the \'etale motivic complex over $Y$ by the complex of sheaves $\Z(i)\ce z^i(-,\bullet)[-2i]$ on the small \'etale site of $Y$.
For example, we have quasi-isomorphisms $\Z(0)\simeq\Z$ and $\Z(1)\simeq\G_m[-1]$ by \cite{Bloch86}*{Corollary 6.4}.
We write $A(i)\ce A\dtp \Z(i)$ for any abelian group $A$.
Finally, \cite{GL01:Bloch-Kato}*{Theorem 1.5} gives a quasi-isomorphism $\Z/n\Z(i)\simeq \mu_n^{\ots i}$ where $\mu_n$ is concentrated in degree $0$.
We shall write $\QZ(i)\ce \drl \mu_n^{\ots i}$ for the direct limit of the sheaves $\mu_n^{\ots i}$ for all $n\ge 1$.

\textbf{Function fields}. Throughout this article, $K$ will be the function field of a smooth proper and geometrically integral curve $X$ over a $p$-adic field.
For $v\in X$, we write $\CO_{X,v}$ for the local ring at $v$ and $\kappa(v)$ for its residue field.
Since $\CO_{X,v}$ is a discrete valuation ring for each $v\in X^{(1)}$,
closed points will also be referred to places in the sequel.
Moreover, $K_v$ (resp. $K_v^h$) will be the completion (resp. Henselization) of $K$ with respect to $v$ and $\CO_v$ (resp. $\CO_{X,v}^h$) will be the ring of integers in $K_v$ (resp. $K_v^h$).
Note that the fields $K$ and $K_v$ have cohomological dimension $3$.

\textbf{\TateSha groups}.
Let $C=[T_1\stra{\rho} T_2]$ be a short complex of $K$-tori concentrated in degree $-1$ and $0$ and
let $C'=[T_2'\to T_1']$ be its dual (again it is concentrated in degree $-1$ and $0$).
We put
\[
\Sha^1(C)\ce\ker\big(\BBH^1(K,C)\to \textstyle\prod\limits_{v\in X^{(1)}}\BBH^1(K_v,C)\big).
\]
For example, if $C=[0\to T]$,
then $C$ is quasi-isomorphic to $T$ and we have $\Sha^1(T)\simeq \Sha^1(C)$.
If $C=[T\to 0]$, then $C\simeq T[1]$ and hence $\Sha^2(T)\simeq \Sha^1(C)$.
For a finite set $S$ of places, we put
\[
\Sha^1_S(C)\ce\ker\big(\BBH^1(K,C)\to \tst\prod\limits_{v\notin S}\BBH^1(K_v,C)\big).
\]
We write $\Sha^1_{\og}(C)$ for locally trivial elements for all but finitely many $v$,
so $\Sha^1_{\og}(C)=\bigcup_S\Sha^1_S(C)$ with $S$ running through all finite set of places.

\section{Dualities for short complexes of tori}
Let $C=[T_1\stra{\rho} T_2]$ and let $C'=[T_2'\to T_1']$.
We fix some sufficiently small non-empty open subset $X_0$ of $X$ such that
$T_1$ and $T_2$ extends to $X_0$-tori (in the sense of \cite{SGA3II}) $\CT_1$ and $\CT_2$, respectively.
The complexes $\CC=[\CT_1\to \CT_2]$ and $\CC'=[\CT_2'\to \CT_1']$ over $X_0$ are defined analogously
(these short complexes are concentrated in degree $-1$ and $0$).
By \cite{Diego-these}*{Lemme 1.4.3} there are respective natural pairings of complexes over $K$ and $X_0$:
\[
C\ots^{\BL}C'\to \Z(2)[3]
\qand
\CC\ots^{\BL}\CC'\to \Z(2)[3].
\]
If $C=[0\to T]$ consists a single torus, then $C\dtp C'\simeq T\dtp T'[1]\to \Z(2)[3]$ is constructed in \cite{HSz16}*{pp.~4}.
Finally, we write $\BBH^i_c(X_0,\CC)\ce \BBH^i(X_0,j_{0!}\CC)$ for the compact support cohomology where $j_0:X_0\to X$ denotes the open immersion.

We begin with a list of properties of the groups under consideration in the sequel.

\begin{lem}\label{lemma: property of torsion groups I}
Let $T$ be a $K$-torus. Let $C$ and $\CC$ as above. Let $U\subset X_0$ be a non-empty open subset.
\benuma
\item The groups $\BBH^i(U,\CC\deots\Z/n)$ and $\BBH^i_c(U,\CC\deots\Z/n)$ are finite for $i\in\Z$.
\item The torsion groups $\BBH^1(U,\CC)_{\tors}$ and $\BBH^1_c(U,\CC)_{\tors}$ are of cofinite type.
\item The groups $\BBH^i(U,\CC)$ and $\BBH^i_c(U,\CC)$ are torsion of cofinite type for $i\ge 2$.
\item The group $H^1(K_v,T)$ is finite and the groups $\Sha^i(T)$ are finite for $i=1,2$.
\eenum
\end{lem}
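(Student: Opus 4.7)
The plan is to reduce all four statements to two ingredients: the Kummer sequence applied componentwise to $\CC$, and standard finiteness plus splitting-field arguments over $X$ and the $p$-adic base. I would exploit that $X_0$ is a $k$-scheme with $\tz k=0$, so every $n\ge 1$ is invertible on $U$ and the sequence $0\to{}_n\CT_j\to\CT_j\xra{n}\CT_j\to 0$ is exact on $U_{\et}$. This yields $\CT_j\dtp\Z/n\simeq{}_n\CT_j[1]$, so $\CC\dtp\Z/n$ is quasi-isomorphic to a bounded complex whose cohomology sheaves ${}_n\CT_1$ and ${}_n\CT_2$ are finite locally constant on $U$. Part (a) then follows from a hypercohomology spectral sequence together with the classical finiteness of $\BBH^i(U,-)$ and $\BBH^i_c(U,-)$ on constructible sheaves; this finiteness is itself reduced via the Leray spectral sequence for $U\to\Spec k$ to $\cd(k)=2$ and the finite cohomological dimension of $U_{\ol k}$ for finite coefficients.

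For (b) and the cofinite-type half of (c) I would use the \distri $\CC\xra{n}\CC\to\CC\dtp\Z/n\to\CC[1]$ to extract, for each $i\in\Z$ and each $n\ge 1$, a surjection
\[
\BBH^{i-1}(U,\CC\dtp\Z/n)\twoheadrightarrow{}_n\BBH^i(U,\CC).
\]
Part (a) makes the source finite, so ${}_n\BBH^i(U,\CC)$ is finite for every $n$; this is exactly (b) when $i=1$ and the cofinite-type half of (c) when $i\ge 2$, and the same argument handles compact support. For the torsion part of (c) I would use the triangle $\CT_2\to\CC\to\CT_1[1]$ to reduce to showing that $H^i(U,\CT)$ is torsion for $i\ge 2$ and any $U$-torus $\CT$. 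Shrinking $U$ if necessary, pick a connected finite Galois \'etale cover $\pi:U'\to U$ splitting $\CT$, so $\CT_{U'}\simeq\G_m^r$; by \rescores along $\pi$ it suffices to check that $H^i(U',\G_m)$ is torsion for $i\ge 2$. This I would deduce from the Leray spectral sequence for $U'\to\Spec k$: the $E_2$-page $H^p(k,H^q(U'_{\ol k},\G_m))$ vanishes for $q\ge 2$ (since $U'_{\ol k}$ is an open curve over an algebraically closed field of characteristic zero) and is torsion in the rows $q=0,1$ for $p\ge 1$, using $\cd(k)=2$ and a Kummer-sequence analysis of $\Pic(U'_{\ol k})$ and the Jacobian of its smooth compactification.

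Finally, for (d), take a finite Galois splitting extension $L_v/K_v$ of $T$; Hilbert 90 gives $H^1(L_v,T)=0$, so $H^1(K_v,T)$ is $[L_v:K_v]$-torsion by \rescores, and its $n$-torsion is a quotient of the finite group $H^1(K_v,{}_nT)$ (finite because ${}_nT$ is a finite Galois module over the field $K_v$ of cohomological dimension $3$), forcing $H^1(K_v,T)$ to be finite. For $\Sha^i(T)$ with $i=1,2$, pick a finite Galois splitting field $L/K$; since $\Sha^1(\G_m)=0$ (Hilbert 90) and $\Sha^2(\G_m)=0$ over the $p$-adic function field $K$ (injectivity of $\br(K)\hookrightarrow\prod_v\br(K_v)$, after Kato), \rescores yields $\Sha^i(T)={}_{[L:K]}\Sha^i(T)$. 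Cofinite type then comes by spreading $T$ out to a torus $\CT$ on a suitable $X_0$ and invoking (a)--(c) (embedding $\Sha^i(T)$ into a group whose $n$-torsion is controlled by the previous parts), which together with the bounded exponent yields finiteness. The most delicate step is the torsion statement of (c), which ultimately rests on controlling $\Pic(U'_{\ol k})$ via its Jacobian and the vanishing of $H^p(k,-)$ in high degrees; everything else is either a routine Kummer-sequence manipulation or a standard splitting-field reduction.
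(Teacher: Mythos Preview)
Your argument is correct and matches the paper's approach: the paper cites \cite{Diego-these}*{Proposition~1.4.4} for (a) and \cite{HSz16}*{Corollary~3.3, Proposition~3.4} for the torsion statements in (c) and for (d), and uses the identical Kummer \distri for (b) and the cofinite-type half of (c), so you have essentially unpacked those references. Two small points: in (c) the phrase ``shrinking $U$ if necessary'' is out of place since the claim is for every $U\subset X_0$, but no shrinking is actually needed because $\CT$ is already an $X_0$-torus and hence splits over a finite \'etale cover of $X_0$ which one simply restricts to $U$; and in (d) your cofinite-type step for $\Sha^i(T)$ is left vague---the missing observation is that every $\alpha\in\Sha^i(T)$, being locally trivial at all $v\notin U$, lifts via the localization sequence to $H^i_c(U,\CT)$ for a \emph{fixed} $U\subset X_0$, so that $\Sha^i(T)$ lands in a quotient of $H^i_c(U,\CT)$ and finiteness then follows from (a) and (c) together with the bounded exponent you already established.
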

\proof
\benuma
\item
This is proved in \cite{Diego-these}*{Proposition 1.4.4}.
\item
The first statement is a consequence of the sequence
\[0\to \BBH^i(U,\CC)/n\to \BBH^i(U,\CC\dtp \Z/n)\to {_n}\BBH^{i+1}(U,\CC)\to 0\]
induced by the \distri $\CC\to \CC\to\CC\dtp \Z/n\to \CC[1]$.
The same argument works for $\BBH^1_c(U,\CC)_{\tors}$.
\item
By \cite{HSz16}*{Corollary 3.3 and Proposition 3.4(1)}, the groups $H^i(U,\CT_2)$ and $H^{i+1}(U,\CT_1)$ are torsion of cofinite type for $i\ge 2$.
Now we deduce that $\BBH^i(U,\CC)$ is torsion by the exactness of $H^i(U,\CT_2)\to \BBH^i(U,\CC)\to H^{i+1}(U,\CT_1)$.
The group $\BBH^i(U,\CC)$ is of cofinite type thanks to the \ses $0\to \BBH^{i-1}(U,\CC)/n\to \BBH^{i-1}(U,\CC\dtp \Z/n)\to {_n}\BBH^{i}(U,\CC)\to 0$.
The same argument works for $\BBH^i_c(U,\CC)$.
\item
The group $H^1(K_v,T)$ is finite because it has \finiexp and is of cofinite type (see \cite{HSz16}*{Proposition 2.2} for more details).
The second statement is part of \cite{HSz16}*{Proposition 3.4(2)}.\qed
\eenum

\subsection{An \ArtVer style duality}
The following result is some sort of variation of the classical \ArtVer duality theorem,
which provides a more precise statement concerning the $\ell$-primary part.

\begin{prop}\label{duality: AV complex of tori degree 1 1}
Let $U\subset X_0$ be any non-empty open subset.
There is a pairing with divisible left kernel for each prime number $\ell$,
\begin{equation*}
  \BBH^i(U,\CC)\{\ell\}\times\BBH^{2-i}_c(U,\CC')^{(\ell)}\to \QZ.
\end{equation*}
\end{prop}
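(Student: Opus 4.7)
My plan is to construct the pairing from the fundamental cup product $\CC\deots\CC'\to \Z(2)[3]$ by restricting it to $U\subset X_0$, taking cup product with compact supports, and composing with a trace $\BBH^5_c(U,\Z(2))\to \QZ$; this yields a global pairing
\[
  \BBH^i(U,\CC)\times \BBH^{2-i}_c(U,\CC')\to \QZ
\]
from which the pairing of the statement arises by restriction to the $\ell$-primary part on the left and extension by continuity to the $\ell$-adic completion on the right. The analysis of its kernel will proceed via finite coefficients and a limit procedure.

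The core step will be to establish a perfect pairing of finite abelian groups at each finite coefficient level,
\[
  \BBH^{i-1}(U,\CC\deots\Z/\ell^n)\times \BBH^{2-i}_c(U,\CC'\deots\Z/\ell^n)\to \QZ,
\]
finiteness being provided by \cref{lemma: property of torsion groups I}(a). I would establish perfectness by devissage along the triangles $\CT_1\to \CC\to \CT_2[-1]\to \CT_1[1]$ and the analogue for $\CC'$, reducing to the case of a single $X_0$-torus and then, via the Kummer sequence, to Artin--Verdier-style duality for the finite sheaves $\mu_{\ell^n}^{\ots k}$ on the arithmetic surface $U$, following the strategy developed in \cite{Diego-these}. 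The degree shift by one relative to the global pairing reflects the $\tor_1(\Z/\ell^n,\Z/\ell^n)$-contribution that appears when both factors carry $\Z/\ell^n$-coefficients.

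I would then pass to the limit. Taking $\drllim_n$ of the Bockstein sequence of \cref{lemma: property of torsion groups I}(b) at degree $i-1$ produces
\[
  0\to \BBH^{i-1}(U,\CC)\ots\Q_\ell/\Z_\ell\to \drllim_n \BBH^{i-1}(U,\CC\deots\Z/\ell^n)\to \BBH^{i}(U,\CC)\{\ell\}\to 0,
\]
while $\prllim_n$ of the compact-support Bockstein sequence at degree $2-i$ exhibits $\BBH^{2-i}_c(U,\CC')^{(\ell)}$ as a subgroup of the limit, with quotient the Tate module $T_\ell\BBH^{3-i}_c(U,\CC')$. The finite-level perfect pairings assemble into a perfect topological pairing between these direct and inverse limits; a direct verification will show that the divisible subgroup $\BBH^{i-1}(U,\CC)\ots\Q_\ell/\Z_\ell$ annihilates $\BBH^{2-i}_c(U,\CC')^{(\ell)}$, so the pairing descends to the one in the statement on $\BBH^{i}(U,\CC)\{\ell\}\times \BBH^{2-i}_c(U,\CC')^{(\ell)}$. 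Its left kernel is the image in $\BBH^{i}(U,\CC)\{\ell\}$ of the annihilator of $\BBH^{2-i}_c(U,\CC')^{(\ell)}$, which is Pontryagin-dual to the torsion-free Tate module $T_\ell\BBH^{3-i}_c(U,\CC')$ and is therefore divisible.

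The principal obstacle is the perfect finite-level duality: since $\CC$ and $\CC'$ are genuine two-term complexes rather than reductions of a single multiplicative sheaf, the devissage must be carried out carefully so that the resulting finite-level pairing is truly perfect rather than non-degenerate only up to some subquotient. Matching the degree shift through the limit process, and identifying the divisible subgroup as the natural source of the left kernel, is the second delicate point.
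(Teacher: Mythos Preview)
Your proposal is correct and follows essentially the same route as the paper. The paper cites the finite-level perfect pairing
\[
\BBH^{i-1}(U,\CC\deots\Z/\ell^n)\times \BBH^{2-i}_c(U,\CC'\deots\Z/\ell^n)\to \QZ
\]
directly from \cite{Diego-these}*{Proposition 1.4.4} rather than reproving it by d\'evissage, and then organises the limit step slightly differently: instead of first assembling the limits into a single perfect pairing and reading off orthogonality of the Bockstein subquotients, it applies the snake lemma at each finite level to the comparison diagram between the two Bockstein sequences, identifying the kernel $\BBK_n(U)\ce\ker\big({_{\ell^n}}\BBH^i(U,\CC)\to(\BBH^{2-i}_c(U,\CC')/\ell^n)^D\big)$ with a cokernel whose direct limit is a quotient of $\big(T_\ell\BBH^{3-i}_c(U,\CC')\big)^D$. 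This is exactly your argument unwound, and it has the mild advantage of sidestepping any $\prl^1$ question when forming the inverse limit of the compact-support Bockstein (you only need that the quotient of $\prl_n\BBH^{2-i}_c(U,\CC'\deots\Z/\ell^n)$ by $\BBH^{2-i}_c(U,\CC')^{(\ell)}$ injects into the Tate module, not that it is the whole Tate module, so this is harmless for your version as well).
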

\begin{proof}
First, recall \cite{Diego-these}*{Proposition 1.4.4} that there is a \ppair of finite groups
\be\label{duality: AV cpx finite level}
  \BBH^i(U,\CC\ots^{\BL}\Z/n)\times \BBH^{1-i}_c(U,\CC'\ots^{\BL}\Z/n)\to \QZ
\ee
for $i\in \Z$.
The pairing $\CC\deots\CC'\to \Z(2)[3]$ induces a pairing $\BBH^i(U,\CC)\times\BBH^{2-i}_c(U,\CC')\to \QZ$ by \cite{HSz16}*{Lemma 1.1}.
In particular, we obtain pairings
\[
{_{\ell^n}}\BBH^i(U,\CC)\times\BBH^{2-i}_c(U,\CC')/\ell^n\to \QZ
\qand
\BBH^i(U,\CC)/\ell^n\times{_{\ell^n}}\BBH^{2-i}(U,\CC')\to \QZ
\]
which fit into the following \cmdm with exact rows
(it commutes by functoriality of the cup-product analogous to \cite{CTH16}*{Lemme 3.1}):
\begin{equation*}
  \xm{
  0\ar[r] & \BBH^{i-1}(U,\CC)/\ell^n\ar[r]\ar[d] & \BBH^{i-1}(U,\CC\deots\Z/\ell^n)\ar[r]\ar[d] & {_{\ell^n}}\BBH^{i}(U,\CC)\ar[r]\ar[d] & 0\\
  0\ar[r] & \big({_{\ell^n}}\BBH^{3-i}_c(U,\CC')\big)^D\ar[r] & \BBH^{2-i}_c(U,\CC'\deots\Z/\ell^n)^D\ar[r] & \big(\BBH^{2-i}_c(U,\CC')/\ell^n\big)^D\ar[r] & 0.
  }
\end{equation*}
Since the middle vertical arrow is an isomorphism by the pairing (\ref{duality: AV cpx finite level}),
we obtain an isomorphism by snake lemma
\[
\BBK_n(U)
\simeq
\cok\big(\BBH^{i-1}(U,\CC)/\ell^n\to \big({_{\ell^n}}\BBH^{3-i}_c(U,\CC')\big)^D\big),
\]
where $\BBK_n(U)=\ker\big({_{\ell^n}}\BBH^{i}(U,\CC)\to (\BBH^{2-i}_c(U,\CC')/\ell^n)^D\big)$.
Taking direct limit over all $n$ yields an isomorphism
\[
\drl_n\BBK_n(U)
\simeq
\drl_n\cok\big(\BBH^i(U,\CC)/\ell^n\to \big({_{\ell^n}}\BBH^{2-i}_c(U,\CC')\big)^D\big).
\]
The latter limit is a quotient of the divisible group
$\drl \big({_{\ell^n}}\BBH^{2-i}_c(U,\CC')\big)^D
\simeq \big(\prl{_{\ell^n}}\BBH^{2-i}_c(U,\CC')\big)^D$,
so it is also divisible.
Indeed, since $\BBH^{2-i}_c(U,\CC')\{\ell\}$ is a torsion group of cofinite type,
so it is of the form $\big(\QZl\big)^{\ots r}\ops F_{\ell}$ where $F_{\ell}$ is a finite $\ell$-group.
Thus the dual of its Tate module $\prl{_{\ell^n}}\BBH^{2-i}_c(U,\CC')$ is a direct sum of copies of $\Q_{\ell}/\Z_{\ell}$,
i.e. $\big(\prl{_{\ell^n}}\BBH^{2-i}_c(U,\CC')\big)^D$ is divisible.
Being isomorphic to a quotient of the divisible group $\big(\prl{_{\ell^n}}\BBH^{2-i}_c(U,\CC')\big)^D$,
we see that $\drl_n\BBK_n(U)$ is divisible as well.
Passing to the direct limit over all $n$ yield an exact sequence (by definition of $\BBK_n(U)$ and exactness of direct limit) of abelian groups
\[
0\to \drl_n\BBK_n(U)\to \BBH^i(U,\CC)\{\ell\}\to \big(\BBH^{2-i}_c(U,\CC')^{(\ell)}\big)^D
\]
which guarantees the required pairing having divisible left kernel.
\end{proof}

\begin{rmk}
We shall see later in \cref{result: global duality degree 1 and 1} that
there exists a non-empty open subset $U_0$ of $X_0$ such that the induced map $\BBH^1(U,\CC)\{\ell\}\to\big(\BBH^1_c(U,\CC')^{(\ell)}\big)^D$ is an isomorphism for each $U\subset U_0$,
because the direct limit $\drl_n\BBK_n(U)$ is contained in a finite group (so it vanishes as it a finite divisible group).
\end{rmk}

\subsection{Local dualities for short complex of tori}
In this subsection, we prove local dualities for the completion $K_v$ and the Henselization $K_v^h$ with respect to $v$.

\begin{prop}[Local dualities]\label{duality: local cpx of tori degree 0 1}
Let $\ell$ be a prime number.
\benuma
\item There is a \ppair functorial in $C$ between discrete and profinite groups:
\[\BBH^{1}(K_v,C)\times \BBH^{0}(K_v,C')^{\wedge}\to \QZ.\]

\item There is a \ppair functorial in $C$ between finite groups:
\begin{equation*}
{_{\ell^n}}\BBH^1(K_v,C)\times \BBH^0(K_v,C')/\ell^n\to \QZ.
\end{equation*}
\eenum
\end{prop}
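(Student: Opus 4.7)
The strategy is to derive both parts from a finite-coefficient duality and then pass to the limit, in close parallel with the proof of \cref{duality: AV complex of tori degree 1 1}. The key input is that $K_v$ has cohomological dimension $3$, so the trace map induces an isomorphism $H^3(K_v,\mu_{\ell^n}^{\ots 2})\simeq \Z/\ell^n$. Tensoring the pairing $C\dtp C'\to \Z(2)[3]$ with $\Z/\ell^n$ and composing the cup product with the trace yields, for every $i\in\Z$, a candidate pairing
\[
\BBH^{i}(K_v,C\dtp \Z/\ell^n)\times \BBH^{-i}(K_v,C'\dtp \Z/\ell^n)\to \QZ,
\]
and the first step is to show that this is perfect between finite groups. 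This is the local analogue of the finite-level \ArtVer pairing \cite{Diego-these}*{Proposition 1.4.4} recalled in the proof of \cref{duality: AV complex of tori degree 1 1}, and I expect it either to be directly available in \cite{Diego-these} for the $2$-dimensional local field $K_v$, or to be provable by devissage: the \distri $T_1[1]\to C\to T_2\to T_1[2]$ reduces the claim for a complex to the case of a single torus $T$, and the Kummer identification $T\dtp \Z/\ell^n\simeq {_{\ell^n}}T[1]$ then reduces the torus case to classical local Tate--Kato duality between the finite Galois modules ${_{\ell^n}}T$ and ${_{\ell^n}}T'$ over $K_v$.

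Granted this, statement (b) follows by a snake-lemma chase. Applying $R\Gamma(K_v,-)$ to the \distri $C\xra{\ell^n} C\to C\dtp \Z/\ell^n\to C[1]$ gives the \ses
\[
0\to \BBH^{0}(K_v,C)/\ell^n\to \BBH^{0}(K_v,C\dtp \Z/\ell^n)\to {_{\ell^n}}\BBH^{1}(K_v,C)\to 0,
\]
and the analogous sequence for $C'$. By functoriality of the cup product, these fit into a \cmdm whose central vertical arrow is the finite-coefficient duality from Step~1. Exactly as in the proof of \cref{duality: AV complex of tori degree 1 1}, the snake lemma then yields the \ppair of finite groups ${_{\ell^n}}\BBH^{1}(K_v,C)\times \BBH^{0}(K_v,C')/\ell^n\to \QZ$ asserted in (b).

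For (a), I use that $\cd(K_v)=3$ together with the fact that $T_i\otimes\Q$ is a uniquely divisible Galois module to conclude that $\BBH^{1}(K_v,C)\otimes\Q=0$, so $\BBH^{1}(K_v,C)$ is torsion. Passing to the direct limit of (b) over $n$ and summing over primes $\ell$ then gives
\[
\BBH^{1}(K_v,C)=\bigoplus_{\ell}\drl_n {_{\ell^n}}\BBH^{1}(K_v,C)\simeq \bigoplus_{\ell}\bigl(\prl_n \BBH^{0}(K_v,C')/\ell^n\bigr)^{D}\simeq \bigl(\BBH^{0}(K_v,C')^{\wedge}\bigr)^{D},
\]
where the middle isomorphism is the Pontryagin dual of an inverse limit of finite groups. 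Functoriality in $C$ comes for free from the cup-product construction, which completes the plan for (a).

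The main technical obstacle is Step~1, namely the finite-coefficient local duality itself, and, secondarily, the verification that the cup-product pairings intertwine properly with the Bockstein connecting maps needed to set up the diagram in Step~2 (a routine but non-trivial compatibility). A further small point is to check that $\BBH^{0}(K_v,C')^{\wedge}$ coincides with $\prl_n \BBH^{0}(K_v,C')/n$, i.e., that the natural topology on $\BBH^{0}(K_v,C')$ inherited from the $v$-adic topology on $T_i'(K_v)$ has finite-index subgroups cofinal with those of the form $n\cdot\BBH^{0}(K_v,C')$.
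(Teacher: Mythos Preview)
Your argument for (b) has a genuine gap. The snake lemma applied to the Bockstein diagram (with the middle vertical an isomorphism) only tells you that ${_{\ell^n}}\BBH^1(K_v,C)\to (\BBH^0(K_v,C')/\ell^n)^D$ is \emph{surjective} and that its kernel is isomorphic to the cokernel of the left vertical map; it does \emph{not} give injectivity. Your appeal to the proof of \cref{duality: AV complex of tori degree 1 1} is misleading: that proposition concludes only that the kernel is \emph{divisible}, not zero, and in the local setting you have given no reason why $\BBH^1(K_v,C)\{\ell\}$ should have trivial divisible part. Running the diagram with $C$ and $C'$ exchanged, or counting cardinalities, does not break the circularity: you get $|{_{\ell^n}}\BBH^1(K_v,C)|\ge |\BBH^0(K_v,C')/\ell^n|$ and the symmetric inequality, but these do not force equality.

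The paper proceeds in the opposite order and this is precisely how the gap is closed. Part (a) is proved first, by d\'evissage along the triangle $T_2'\to T_1'\to C'\to T_2'[1]$: one uses the finiteness of $H^1(K_v,T_2')$ together with \cite{HSz05}*{Appendix} to control the effect of profinite completion on the resulting exact sequence, and then reduces to the known local duality for a single torus over $K_v$ (as in \cite{Diego-these}*{Proposition 1.4.9(ii)}). Once (a) is available, part (b) follows: the snake lemma gives surjectivity of ${_{\ell^n}}\BBH^1(K_v,C)\to (\BBH^0(K_v,C')/\ell^n)^D$, and injectivity comes from the commutative square
\[
\xymatrix{
{_{\ell^n}}\BBH^1(K_v,C)\ar[r]\ar@{^{(}->}[d] & (\BBH^0(K_v,C')/\ell^n)^D\ar[d]\\
\BBH^1(K_v,C)\ar@{^{(}->}[r] & (\BBH^0(K_v,C')^{\wedge})^D,
}
\]
whose bottom arrow is injective by (a). Note also that your limit argument would naturally produce $\BBH^0(K_v,C')_{\wedge}=\prl\BBH^0(K_v,C')/n$ rather than the profinite completion $\BBH^0(K_v,C')^{\wedge}$; in the paper the identification of these two is a \emph{consequence} of the proposition (see the remark following it), not an input.
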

\proof\hfill
\benuma
\item The \distri $T_2'\to T_1'\to C'\to T_2'[1]$ induces an exact sequence
$$H^0(K_v,T_2')\to H^0(K_v,T_1')\to \BBH^0(K_v,C')\to H^1(K_v,T_2')\to H^1(K_v,T_1').$$
Since $H^1(K_v,T_2')$ is finite by \cref{lemma: property of torsion groups I}(4),
by \cite{HSz05}*{Appendix, Proposition} there is an exact sequence
\[
H^0(K_v,T_1')^{\wedge}\to \BBH^0(K_v,C')^{\wedge}\to H^1(K_v,T_2')\to H^1(K_v,T_1')
\]
and a complex $$H^0(K_v,T_2')^{\wedge}\to H^0(K_v,T_1')^{\wedge}\to \BBH^0(K_v,C')^{\wedge}.$$
Now the statement follows by exactly the same argument as \cite{Diego-these}*{Proposition 1.4.9(ii)}.

\item Consider the following exact \cmdm
\begin{equation*}
  \xymatrix{
  0\ar[r] & \BBH^0(K_v,C)/\ell^n\ar[r]\ar[d] & \BBH^0(K_v,C\deots\Z/\ell^n)\ar[r]\ar[d] & {_{\ell^n}}\BBH^1(K_v,C)\ar[r]\ar[d] & 0\\
  0\ar[r] & \big({_{\ell^n}}\BBH^1(K_v,C')\big)^D\ar[r] & \BBH^0(K_v,C'\deots\Z/\ell^n)^D\ar[r] & \big(\BBH^0(K_v,C')/\ell^n\big)^D\ar[r] & 0
  }
\end{equation*}
with the middle vertical arrow being an isomorphism of finite groups (see the proof of \cite{Diego-these}*{Proposition 1.4.9(i)}).
It follows that the right vertical arrow is surjective.
Moreover, we have a \cmdm
\begin{equation*}
  \xymatrix{
  {_{\ell^n}}\BBH^1(K_v,C)\ar[r]\ar[d] & \big(\BBH^0(K_v,C')/\ell^n\big)^D\ar[d]\\
  \BBH^1(K_v,C)\ar[r] & \big(\BBH^0(K_v,C')^{\wedge}\big)^D
  }
\end{equation*}
where the lower horizontal arrow is injective by (1).
Therefore the upper horizontal arrow is also injective and hence it is an isomorphism.
\qed
\eenum

\begin{rmk}
By a similar argument as \cref{duality: local cpx of tori degree 0 1}(2),
we obtain a \ppair between profinite and discrete groups
\[
\BBH^{0}(K_v,C)_{\wedge}\times \BBH^{1}(K_v,C')\to \QZ.
\]
So 
we can identify $\BBH^0(K_v,C)_{\wedge}$ with the profinite completion $\BBH^0(K_v,C)^{\wedge}$ by \cref{duality: local cpx of tori degree 0 1}(1).
\end{rmk}

\begin{cor}\label{duality: local cpx of tori limits 0 1}
Let $\ell$ be a prime number.
\benuma
\item
There is a \ppair between discrete and profinite groups:
\[\BBH^1(K_v,C)\{\ell\}\times\BBH^0(K_v,C')^{(\ell)}\to \QZ.\]
\item
There is a \ppair between locally compact groups:
\[
\big(\tst\prod\limits_{v\in X^{(1)}}\BBH^1(K_v,C)\big)\{\ell\}
\times
\big(\tst\bigoplus\limits_{v\in X^{(1)}}\BBH^0(K_v,C')\big)^{(\ell)}
\to \QZ.
\]
More precisely, the former group is a direct limit of profinite groups and the latter is a projective limit of discrete torsion groups.
\eenum
\end{cor}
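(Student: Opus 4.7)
The plan is to derive both pairings as appropriate limits of the finite-level perfect pairings of finite groups
\[
{}_{\ell^n}\BBH^1(K_v,C)\times \BBH^0(K_v,C')/\ell^n\to \QZ
\]
furnished by \cref{duality: local cpx of tori degree 0 1}(2); no new cohomological input is needed beyond that.

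For (a), I would write $\BBH^1(K_v,C)\{\ell\}=\drl_n {}_{\ell^n}\BBH^1(K_v,C)$ as a direct limit of finite abelian groups, which is discrete torsion, and $\BBH^0(K_v,C')^{(\ell)}=\prl_n \BBH^0(K_v,C')/\ell^n$ as an inverse limit of finite abelian groups, which is profinite. Since Pontryagin duality between discrete torsion and profinite abelian groups exchanges $\drl$ and $\prl$, passage to the limit in $n$ of the perfect pairings above yields the desired perfect pairing of (a).

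For (b), I would first take the product over $v\in X^{(1)}$ of the finite-level pairings. This gives, for each $n$, a pairing
\[
\tst\prod\limits_{v\in X^{(1)}} {}_{\ell^n}\BBH^1(K_v,C)\times \tst\bigoplus\limits_{v\in X^{(1)}}\BBH^0(K_v,C')/\ell^n\to \QZ
\]
which is perfect between the profinite group $\prod_v {}_{\ell^n}\BBH^1(K_v,C)$ and the discrete torsion group $\bigoplus_v \BBH^0(K_v,C')/\ell^n$, using that the Pontryagin dual of an arbitrary direct product of finite abelian groups is the direct sum of the duals, and vice versa. Now I pass to the limit in $n$. On the left, since the functor ${}_{\ell^n}(-)$ commutes with products, $\drl_n \prod_v {}_{\ell^n}\BBH^1(K_v,C)=\big(\prod_v \BBH^1(K_v,C)\big)\{\ell\}$, exhibited as a direct limit of profinite groups. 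On the right, since $(-)/\ell^n$ is a cokernel and therefore commutes with direct sums, $\prl_n \bigoplus_v \BBH^0(K_v,C')/\ell^n=\big(\bigoplus_v \BBH^0(K_v,C')\big)^{(\ell)}$, exhibited as a projective limit of discrete torsion groups. Pontryagin duality then gives the perfect pairing of locally compact abelian groups asserted in (b).

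The main conceptual point, rather than a computational obstacle, is to keep track of the topological categories at play: everything happens within the dual pair consisting of direct limits of profinite groups on the one hand and projective limits of discrete torsion groups on the other. Because the level-$n$ pairings are already perfect between finite groups and because the relevant ${}_{\ell^n}(-)$, $(-)/\ell^n$ functors commute with products and direct sums, these formal manipulations lift the perfect duality at finite level to the desired perfect dualities in the limit.
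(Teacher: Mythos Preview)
Your proposal is correct and follows essentially the same route as the paper: both start from the finite-level perfect pairing ${_{\ell^n}}\BBH^1(K_v,C)\simeq(\BBH^0(K_v,C')/\ell^n)^D$ of \cref{duality: local cpx of tori degree 0 1}(2), pass to the direct limit over $n$ for (a), and for (b) first take the product over all $v$ (using that ${_{\ell^n}}(-)$ commutes with products and that the dual of a direct sum is a product) before passing to the limit in $n$. Your write-up is in fact slightly more explicit than the paper's about why these limit and product/sum operations interact correctly.
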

\proof
We apply the local duality \cref{duality: local cpx of tori degree 0 1}(2), i.e. the isomorphism ${_{\ell^n}}\BBH^1(K_v,C)\simeq \big(\BBH^0(K_v,C')/\ell^n\big)^D$.\hfill
\benuma
\item
Passing to the direct limit over all $n$ yields the isomorphism $\BBH^1(K_v,C)\{\ell\}\simeq \big(\BBH^0(K_v,C')^{(\ell)}\big)^D$.

\item
Taking product over all places gives isomorphisms
\[{_{\ell^n}}\big(\textstyle\prod\limits_v\BBH^1(K_v,C)\big)
\simeq\textstyle\prod\limits_v\big(\BBH^0(K_v,C')/\ell^n\big)^D
\simeq \big(\big(\textstyle\bigoplus\limits_v\BBH^0(K_v,C')\big)/\ell^n\big)^D.\]
Thus the desired \ppair follows by passing to the direct limit over all $n\ge 1$.\qed
\eenum

\begin{rmk}
Analogously, there is a \ppair between locally compact groups
\[
\big(\tstprodlim_{v\in X^{(1)}}\BBH^0(K_v,C)\big)\{\ell\}
\times
\big(\tstopslim_{v\in X^{(1)}}\BBH^1(K_v,C')\big)^{(\ell)}
\to \QZ.
\]
More precisely, the former group is a direct limit of profinite groups and the latter is a projective limit of discrete torsion groups.
\end{rmk}

The next lemma is probably well-known:

\begin{lem}\label{lemma: on exactness of l-adic completion}
Let $A_1\to A_2\to A_3 \to 0$ be an \ttes of abelian groups.
If ${_{\ell}}A_3$ is finite, then $A_1^{(\ell)}\to A_2^{(\ell)}\to A_3^{(\ell)}\to 0$ is exact for each prime number $\ell$.
\end{lem}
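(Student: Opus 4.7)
The plan is to reduce the problem to a clean short exact sequence and then exploit the fact that $\varprojlim^1$ vanishes on inverse systems of finite groups and on inverse systems with surjective transition maps.

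First I would factor $A_1 \to A_2 \to A_3 \to 0$ through its image: let $K\ce \Image(A_1\to A_2)=\ker(A_2\to A_3)$, which gives a short exact sequence $0\to K\to A_2\to A_3\to 0$. The snake lemma applied to multiplication by $\ell^n$ produces, for every $n\ge 1$, a short exact sequence
\[
0\to Q_n\to A_2/\ell^n\to A_3/\ell^n\to 0,
\]
where $Q_n\ce K/\ell^n\,/\,D_n$ and $D_n$ is the image of ${_{\ell^n}}A_3$ in $K/\ell^n$ under the connecting homomorphism. A key preliminary observation is that the hypothesis ${_\ell}A_3$ finite forces ${_{\ell^n}}A_3$ to be finite for every $n$ (by induction on $n$, using the exact sequence $0\to {_{\ell^{n-1}}}A_3\to {_{\ell^n}}A_3\xra{\ell^{n-1}\cdot}{_\ell}A_3$). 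Consequently each $D_n$ is finite.

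Next I would pass to the inverse limit. Because $\{A_2/\ell^n\}$ has surjective transition maps, it satisfies Mittag--Leffler and $\varprojlim^1 A_2/\ell^n=0$. Similarly $\{K/\ell^n\}$ has surjective transition maps, giving $\varprojlim^1 K/\ell^n=0$, and $\{D_n\}$ is a system of finite groups, giving $\varprojlim^1 D_n=0$. Applying the $\varprojlim$-$\varprojlim^1$ exact sequence to $0\to D_n\to K/\ell^n\to Q_n\to 0$ forces $\varprojlim^1 Q_n=0$. Then the $\varprojlim$-$\varprojlim^1$ sequence of $0\to Q_n\to A_2/\ell^n\to A_3/\ell^n\to 0$ yields a short exact sequence
\[
0\to \varprojlim Q_n\to A_2^{(\ell)}\to A_3^{(\ell)}\to 0,
\]
which gives the required surjectivity of $A_2^{(\ell)}\to A_3^{(\ell)}$.

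It remains to identify $\varprojlim Q_n$ with the image of $A_1^{(\ell)}\to A_2^{(\ell)}$. On the one hand, the short exact sequence $0\to D_n\to K/\ell^n\to Q_n\to 0$ together with $\varprojlim^1 D_n=0$ gives a surjection $K^{(\ell)}\twoheadrightarrow \varprojlim Q_n$. On the other hand, writing $L\ce \ker(A_1\to K)$ and using that $A_1/\ell^n\to K/\ell^n$ is surjective with kernel a quotient of $L/\ell^n$ (hence a system with surjective transition maps, so with vanishing $\varprojlim^1$), I obtain a surjection $A_1^{(\ell)}\twoheadrightarrow K^{(\ell)}$. Composing, the image of $A_1^{(\ell)}$ in $A_2^{(\ell)}$ is exactly $\varprojlim Q_n=\ker(A_2^{(\ell)}\to A_3^{(\ell)})$, which proves the middle exactness.

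The only potentially delicate point is the bookkeeping of the various $\varprojlim^1$'s and the finiteness of $D_n$; once the finiteness of ${_{\ell^n}}A_3$ is established, everything else is formal. I do not expect a serious obstacle beyond this.
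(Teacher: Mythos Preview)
Your proof is correct and follows essentially the same route as the paper: both factor through $K=\ker(A_2\to A_3)$, use the snake lemma to identify $\ker(A_2/\ell^n\to A_3/\ell^n)$ as a quotient of $K/\ell^n$ by the (finite) image of ${_{\ell^n}}A_3$, and then invoke Mittag--Leffler-type vanishing to pass to the limit. The only cosmetic difference is that the paper phrases everything in terms of surjective inverse systems (noting directly that $\ker g_n$ is a quotient of $A_1/\ell^n$, hence the system is surjective), whereas you organize the same vanishing via the six-term $\varprojlim$--$\varprojlim^1$ sequence; the content is identical.
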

\begin{proof}
Let's say $f:A_1\to A_2$, $g:A_2\to A_3$ and $g_n:A_2/\ell^n\to A_3/\ell^n$.
Thus there is a \ses $0\to \ker g_n\to A_2/\ell^n\to A_3/\ell^n\to 0$.
Since $\ker g_n$ is a quotient of $A_1/\ell^n$, $\{\ker g_n\}$ forms a surjective system in the sense of \cite{AM69}*{Proposition 10.2} and
it follows that $0\to \prl \ker g_n\to A_2^{(\ell)}\to A_3^{(\ell)}\to 0$ is exact.
By the snake lemma, there is an \ttes
$0\to {_{\ell^n}}\ker g\to {_{\ell^n}}A_2\to {_{\ell^n}}A_3\to (\ker g)/\ell^n\to \ker g_n\to 0$.
But ${_{\ell^n}}A_3$ is finite by assumption,
we conclude that $(\ker g)^{(\ell)}\to \prl \ker g_n$ is surjective by Mittag--Leffler condition.
Finally, let $\ker f_n\ce \ker\big(A_1/\ell^n\to (\ker g)/\ell^n\big)$.
Then $\{\ker f_n\}$ is a surjective system (because $\ker f/\ell^n\to \ker f_n$ is surjective),
and hence $A_1^{(\ell)}\to (\ker g)^{(\ell)}$ is surjective.
Summing up, the sequence $A_1^{(\ell)}\to A_2^{(\ell)}\to A_3^{(\ell)}\to 0$ is exact.
\end{proof}

\begin{lem}\label{lemma: Henselization vs completion}
Let $\ell$ be a prime number. Let $T$ be a $K$-torus and let $C=[T_1\to T_2]$ be as above. \hfill
\benuma
\item The natural map $H^0(K_v^h,T)\to H^0(K_v,T)$ induces
an isomorphism 
$H^0(K_v^h,T)^{(\ell)}\simeq H^0(K_v,T)^{(\ell)}$.
Moreover, there is an isomorphism 
$\BBH^0(K_v^h,C)^{(\ell)}\simeq\BBH^0(K_v,C)^{(\ell)}$.

\item For $i\ge 1$, there is an isomorphism $\BBH^i(K_v^h,C)\to \BBH^i(K_v,C)$.
\eenum
\end{lem}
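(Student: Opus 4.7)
My overall strategy exploits the classical fact that $K_v$ is the completion of $K_v^h$: by Krasner's lemma together with Hensel's lemma one obtains an identification of absolute Galois groups $\gal(\ol{K_v^h}|K_v^h) \simeq \gal(\ol{K_v}|K_v)$, hence $H^i(K_v^h,M) \isoto H^i(K_v,M)$ for every Galois module $M$ when $i\ge 1$, and also for $i=0$ when $M$ is torsion (more generally, discrete). I would first dispose of part (b), then leverage it for part (a).

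For part (b), I apply the functorial \distri $T_1\to T_2\to C\to T_1[1]$ to produce, over both $K_v^h$ and $K_v$, a long exact sequence relating $\BBH^\bullet(C)$ to the cohomologies $H^\bullet(T_j)$. For a fixed $i\ge 1$, the four flanking terms
\[
H^i(T_1),\qquad H^i(T_2),\qquad H^{i+1}(T_1),\qquad H^{i+1}(T_2)
\]
all carry indices $\ge 1$ and therefore identify between $K_v^h$ and $K_v$ by the remark above. The five lemma applied to the map of long exact sequences forces $\BBH^i(K_v^h,C) \isoto \BBH^i(K_v,C)$.

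For part (a), I would proceed by a Kummer-style argument. The \distri $C \xrightarrow{\ell^n} C \to C\dtp \Z/\ell^n \to C[1]$ yields, over each field, a short exact sequence
\[
0 \to \BBH^0(C)/\ell^n \to \BBH^0(C\dtp \Z/\ell^n) \to \BBH^1(C)[\ell^n] \to 0.
\]
Since in characteristic $0$ every torus $T_j$ sits in a Kummer sequence $0\to T_j[\ell^n]\to T_j \xrightarrow{\ell^n} T_j\to 0$ of \'etale sheaves, one has a \qis $T_j\dtp \Z/\ell^n \simeq T_j[\ell^n][1]$, so $C\dtp \Z/\ell^n$ is represented by a bounded complex of finite \'etale $\ell^n$-torsion sheaves. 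Hence its hypercohomology is ordinary Galois cohomology of finite modules and agrees over $K_v^h$ and $K_v$; the rightmost term $\BBH^1(C)[\ell^n]$ agrees by part (b). A diagram chase forces $\BBH^0(K_v^h,C)/\ell^n \isoto \BBH^0(K_v,C)/\ell^n$ for every $n$; as the transition maps in these inverse systems are surjective, Mittag--Leffler is automatic, and passing to $\prl_n$ gives the desired $\ell$-adic isomorphism $\BBH^0(K_v^h,C)^{(\ell)} \isoto \BBH^0(K_v,C)^{(\ell)}$. Specializing to $C=[0\to T]$ (or running the same Kummer sequence directly) recovers the first assertion.

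No serious obstacle arises: the main technical point is the \qis $T_j\dtp \Z/\ell^n \simeq T_j[\ell^n][1]$, which is immediate from the \'etale surjectivity of the $\ell^n$-th power map on a torus over a field of characteristic zero; everything else is formal diagram chasing together with the Galois-cohomological comparison recalled at the outset.
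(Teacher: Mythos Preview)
Your argument for part (b) is essentially the same as the paper's: dévissage along the triangle $T_1\to T_2\to C\to T_1[1]$ combined with the known isomorphisms $H^i(K_v^h,T_j)\simeq H^i(K_v,T_j)$ for $i\ge 1$. One caution: your justification ``Galois groups are identified, hence $H^i(K_v^h,M)\simeq H^i(K_v,M)$ for every Galois module $M$ when $i\ge 1$'' is too quick when $M=T(\ol{K_v^h})$ versus $T(\ol{K_v})$, since these are \emph{different} Galois modules (the latter is not the points over an algebraic closure of $K_v^h$). The result is nevertheless true and standard; the paper simply cites \cite{HSz16}*{Corollary 3.2}. You should cite the same, or reduce to $\G_m$ and invoke the comparison of Brauer groups for Henselian versus complete discretely valued fields.

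For part (a) your route genuinely differs from the paper's. The paper first treats a single torus by quoting \cite{Dem11}*{Lemme 3.7} for $H^0(K_v^h,T)/\ell^n\simeq H^0(K_v,T)/\ell^n$, and then handles the complex $C$ by $\ell$-adically completing the long exact sequence attached to $T_1\to T_2\to C\to T_1[1]$; this requires the finiteness of $H^1(K_v,T_i)$ in order to apply \cref{lemma: on exactness of l-adic completion}. You instead go through the Kummer triangle $C\to C\to C\dtp\Z/\ell^n\to C[1]$: since $C\dtp\Z/\ell^n$ is a bounded complex of finite étale group schemes, its hypercohomology over $K_v^h$ and $K_v$ literally coincides (here the Galois-group identification \emph{is} enough, as the coefficient modules are finite), and together with part (b) for $\BBH^1$ this forces $\BBH^0(K_v^h,C)/\ell^n\simeq\BBH^0(K_v,C)/\ell^n$ for all $n$. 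Your approach is arguably cleaner: it treats $C$ directly rather than reducing to tori, and it avoids any appeal to the finiteness of $H^1(K_v,T_i)$ or to \cref{lemma: on exactness of l-adic completion}. The paper's approach, on the other hand, makes the dependence on the torus case explicit and reuses machinery already set up for other purposes.
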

\proof\hfill
\benuma
\item The same argument as \cite{Dem11}*{Lemme 3.7} yields an isomorphism
$H^0(K_v^h,T)/\ell^n\simeq H^0(K_v,T)/\ell^n$.
Therefore the first assertion follows by passing to the inverse limit over all $n$.
For the second statement, since $H^1(K_v^h,T_i)\simeq H^1(K_v,T_i)$ is finite for $i=1,2$,
there is a \cmdm of complexes with rows exact at the last four terms by \cref{lemma: on exactness of l-adic completion}:
\begin{displaymath}
  \xymatrix{
  H^0(K_v^h,T_1)^{(\ell)}\ar[r]\ar[d] & H^0(K_v^h,T_2)^{(\ell)}\ar[r]\ar[d] & \BBH^0(K_v^h,C)^{(\ell)}\ar[r]\ar[d] & H^1(K_v^h,T_1)\ar[r]\ar[d] & H^1(K_v^h,T_2)\ar[d]\\
  H^0(K_v,T_1)^{(\ell)}\ar[r] & H^0(K_v,T_2)^{(\ell)}\ar[r] & \BBH^0(K_v,C)^{(\ell)}\ar[r] & H^1(K_v,T_1)\ar[r] & H^1(K_v,T_2).
  }
\end{displaymath}
Now all the vertical arrows except the middle one are isomorphisms, and hence the middle one is also an isomorphism by the $5$-lemma.

\item
By \cite{HSz16}*{Corollary 3.2}, we know that $H^i(K_v^h,T)\simeq H^i(K_v,T)$ for each $i\ge 1$ and for each $K$-torus $T$.
Thus the isomorphism $\BBH^i(K_v^h,C)\simeq \BBH^i(K_v,C)$ for each $i\ge 1$ follows after applying d\'evissage to the \distri $T_1\to T_2\to C\to T_1[1]$.
\qed
\eenum

\begin{cor}\label{duality: local Hensel degree 0 1}
There is a \ppair between direct limit of profinite groups and projective limit of discrete torsion groups:
\[\big(\tstprodlim_{v\in X^{(1)}}\BBH^1(K_v,C)\big)\{\ell\}\times \big(\tstopslim_{v\in X^{(1)}}\BBH^0(K_v^h,C')\big)^{(\ell)}\to \QZ.\]
\end{cor}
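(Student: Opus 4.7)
The plan is to reduce the statement to \cref{duality: local cpx of tori limits 0 1}(2), which already gives the analogous pairing
\[
\big(\tstprodlim_{v}\BBH^1(K_v,C)\big)\{\ell\}\times \big(\tstopslim_{v}\BBH^0(K_v,C')\big)^{(\ell)}\to \QZ,
\]
and replace $\BBH^0(K_v,C')$ by $\BBH^0(K_v^h,C')$ using the Henselization-vs-completion comparison of \cref{lemma: Henselization vs completion}. Concretely, I would proceed in three steps.

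\emph{Step 1.} The core ingredient from the proof of \cref{duality: local cpx of tori limits 0 1}(2) is the finite-level identification
\[
{_{\ell^n}}\big(\tstprodlim_v \BBH^1(K_v,C)\big) \simeq \big((\tstopslim_v \BBH^0(K_v,C'))/\ell^n\big)^D
\]
for every $n\ge 1$, compatibly in $n$. I would take this as the starting point and aim to rewrite the right-hand side in terms of Henselizations.

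\emph{Step 2.} The key technical point is the \emph{mod-$\ell^n$} comparison
\[
\BBH^0(K_v^h,C')/\ell^n \stackrel{\sim}{\longrightarrow} \BBH^0(K_v,C')/\ell^n,
\]
i.e. a finite-level refinement of \cref{lemma: Henselization vs completion}(1) (which only states the $(\ell)$-adic version for a complex). I would adapt the argument of \cref{lemma: Henselization vs completion}(1): the proof already establishes $H^0(K_v^h,T)/\ell^n\simeq H^0(K_v,T)/\ell^n$ for a torus $T$, while \cref{lemma: Henselization vs completion}(2) gives $H^i(K_v^h,T)\simeq H^i(K_v,T)$ for $i\ge 1$. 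Applying the hypercohomology long exact sequence attached to the triangle $T_2'\to T_1'\to C' \to T_2'[1]$, reducing modulo $\ell^n$ (using that $H^1(K_v,T_i')$ is finite by \cref{lemma: property of torsion groups I}(4), so the mod-$\ell^n$ sequence stays exact), and applying the five lemma to the induced commutative diagram between the $K_v^h$- and $K_v$-sequences yields the desired isomorphism.

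\emph{Step 3.} Taking the direct sum of the isomorphisms in Step 2 over all $v\in X^{(1)}$ and substituting into Step 1 gives
\[
{_{\ell^n}}\big(\tstprodlim_v \BBH^1(K_v,C)\big) \simeq \big((\tstopslim_v \BBH^0(K_v^h,C'))/\ell^n\big)^D.
\]
Passing to the direct limit over all $n\ge 1$ on both sides, and invoking Pontryagin duality between projective limits of finite discrete groups and direct limits of their finite duals (so that the right-hand side becomes the Pontryagin dual of $(\bigoplus_v \BBH^0(K_v^h,C'))^{(\ell)}$), yields the required perfect pairing between the direct limit of profinite groups on the left and the projective limit of discrete torsion groups on the right.

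The main obstacle is Step~2: we need the comparison at the level of mod-$\ell^n$ quotients of $\BBH^0$ of the complex $C'$, not merely on $(\ell)$-adic completions, because direct sums and $(\ell)$-adic completion do not commute in general, whereas direct sums commute with mod-$\ell^n$ quotients. Once this refinement of \cref{lemma: Henselization vs completion}(1) is in hand, the rest of the argument is formal.
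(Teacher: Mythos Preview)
Your overall plan is correct and coincides with the paper's: both reduce the corollary to the finite-level comparison $\BBH^0(K_v^h,C')/\ell^n \simeq \BBH^0(K_v,C')/\ell^n$, then take products over $v$ and direct limits over $n$ exactly as in \cref{duality: local cpx of tori limits 0 1}(2). The divergence is in how this comparison (your Step~2) is obtained.

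Your Step~2 as written has a gap. Tensoring the five-term exact sequence coming from the triangle $T_2'\to T_1'\to C'\to T_2'[1]$ with $\Z/\ell^n$ does \emph{not} preserve exactness, and finiteness of $H^1(K_v,T_i')$ alone does not repair this: $\Tor$ terms appear, so the ``mod-$\ell^n$ sequence'' you describe is only a complex, and the five lemma cannot be applied to it directly.

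The paper establishes the same comparison by a shorter, duality-based route: one reruns the proof of \cref{duality: local cpx of tori degree 0 1}(2) over $K_v^h$ (all the inputs, in particular the finite-level duality for $C\dtp\Z/\ell^n$ and the finiteness of $H^1$, hold there) to obtain a perfect pairing ${_{\ell^n}}\BBH^1(K_v^h,C)\times \BBH^0(K_v^h,C')/\ell^n\to\QZ$ of finite groups. Since ${_{\ell^n}}\BBH^1(K_v^h,C)\simeq {_{\ell^n}}\BBH^1(K_v,C)$ by \cref{lemma: Henselization vs completion}(2), comparing with \cref{duality: local cpx of tori degree 0 1}(2) over $K_v$ immediately gives $\BBH^0(K_v^h,C')/\ell^n\simeq \BBH^0(K_v,C')/\ell^n$. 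If you prefer a direct argument in the spirit of your Step~2, use instead the short exact sequence
\[
0\to \BBH^0(-,C')/\ell^n\to \BBH^0(-,C'\dtp\Z/\ell^n)\to {_{\ell^n}}\BBH^1(-,C')\to 0:
\]
the middle term agrees for $K_v^h$ and $K_v$ (same absolute Galois group, finite coefficients), the right term agrees by \cref{lemma: Henselization vs completion}(2), hence so does the left.
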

\begin{proof}
The same argument as \cref{duality: local cpx of tori degree 0 1} yields a \ppair ${_{\ell^n}}\BBH^1(K_v^h,C)\times\BBH^0(K_v^h,C')/\ell^n\to\QZ$ of finite groups.
Therefore \cref{lemma: Henselization vs completion}(2) implies that $\BBH^0(K_v^h,C')/\ell^n\simeq \BBH^0(K_v,C')/\ell^n$.
The desired \ppair is an immediate consequence by the same argument as \cref{duality: local cpx of tori limits 0 1}(2).
\end{proof}

\subsection{Global dualities for short complex of tori}
The goal of this subsection is to establish a \ppair $\Sha^1(C)\times \Sha^1(C')\to \QZ$ between finite groups.
We first prove the finiteness of $\Sha^1(C)$ and $\Sha^1(C')$.

\begin{lem}
Let $C=[T_1\to T_2]$ be a short complex of tori.
The group $\Sha^1_{\og}(C)$ is of finite exponent.
\end{lem}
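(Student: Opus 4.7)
The plan is to use restriction--corestriction with respect to a finite Galois extension splitting both tori, and then bound the image inside a Brauer group by elementary linear algebra.

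Fix a finite Galois extension $L/K$ splitting both $T_1$ and $T_2$ and set $n = [L:K]$. Since $\cores_{L/K}\circ\res_{L/K}=n\cdot\id$ on $\BBH^1(K,C)$, and since restriction sends $\Sha^1_{\og}(C)$ into $\Sha^1_{\og}(C_L)$ (a class trivial at $v$ stays trivial at every place $w$ of $L$ above $v$), it suffices to bound the exponent of $\Sha^1_{\og}(C_L)$; if the latter has exponent $m$, then $\Sha^1_{\og}(C)$ has exponent dividing $nm$. Over $L$ the tori $T_{1,L}$ and $T_{2,L}$ are split, so Hilbert~90 gives $H^1(L,T_{i,L})=0$ and $H^1(L_w,T_{i,L})=0$ at each place $w$. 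The hypercohomology long exact sequence for $T_{1,L}\to T_{2,L}\to C_L\to T_{1,L}[1]$ then collapses into compatible injections $\BBH^1(L,C_L)\hookrightarrow H^2(L,T_{1,L})\simeq\br(L)^r$ and $\BBH^1(L_w,C_L)\hookrightarrow\br(L_w)^r$, under which $\Sha^1_{\og}(C_L)$ embeds into $\ker(M)\cap\Sha^2_{\og}(\gm_L)^r$, where $M\colon\Z^r\to\Z^s$ is the cocharacter matrix of $T_{1,L}\to T_{2,L}$.

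Using the Smith normal form of $M$, the group $\ker(M)\otimes_{\Z}\br(L)$ decomposes as a direct sum of pieces ${_{d_i}}\br(L)$, each killed by the corresponding elementary divisor $d_i$, together with, when $M$ is not injective on $\Z$, a ``kernel direction'' isomorphic to $\br(L)^{r-\rank M}$. Intersected with $\Sha^2_{\og}(\gm_L)^r$, the $d_i$-pieces contribute exponent dividing $\prod d_i$, while the kernel direction contributes a subgroup of $\Sha^2_{\og}(\gm_L)^{r-\rank M}$. For the latter, one appeals to the finite exponent of $\Sha^2_{\og}(\gm_L)$ over the $p$-adic function field $L$: using the Leray spectral sequence for $X_L\to\Spec k_L$ (with $X_L$ the proper regular model), the vanishing $H^3(k_L,\gm)=0$ coming from $\cd(k_L)=2$, and the finiteness of $H^1(k_L,\pic(X_{L,\ol{k_L}}))$, one shows that $\br(X_L)/\br(k_L)$ is finite; a Lichtenbaum-style evaluation argument at closed points of $X_L$ (using that residue degrees $[\kappa(v):k_L]$ take coprime values as $v$ varies) then eliminates the $\br(k_L)$-contribution.

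The main obstacle is this last step: bounding the exponent of $\Sha^2_{\og}(\gm_L)$ for the $p$-adic function field $L$, which rests on the structural properties of the Brauer group of a smooth proper curve over a $p$-adic field. The linear-algebra analysis of the Smith-normal-form pieces is routine by contrast. Once the bound on $\Sha^2_{\og}(\gm_L)$ is established, assembling it with the elementary-divisor bound and the restriction--corestriction factor $n$ yields an explicit finite bound on the exponent of $\Sha^1_{\og}(C)$.
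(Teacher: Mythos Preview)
Your strategy—restrict to a splitting field $L$, embed $\Sha^1_{\og}(C_L)$ into $\Sha^2_{\og}(T_{1,L})$ via the connecting map, then invoke restriction--corestriction—is exactly the paper's. Two remarks, one minor and one substantive.

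The Smith normal form detour is unnecessary. You already have $\Sha^1_{\og}(C_L)\hookrightarrow \Sha^2_{\og}(T_{1,L})=\Sha^2_{\og}(\gm_L)^r$, so a bound on the exponent of $\Sha^2_{\og}(\gm_L)$ immediately bounds that of $\Sha^1_{\og}(C_L)$; intersecting further with $\ker(M\otimes\br L)$ and splitting into elementary-divisor pieces contributes nothing.

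More importantly, your sketch of why $\Sha^2_{\og}(\gm_L)$ has finite exponent is flawed: the group $H^1(k_L,\pic(X_{L,\ol{k_L}}))$ is \emph{not} finite in general. From $0\to\pic^0\to\pic\to\Z\to 0$ one finds that $H^1(k_L,\pic)$ is a quotient of $H^1(k_L,\pic^0)$, and Tate local duality for abelian varieties together with Mattuck's theorem identify the latter with $(\Qp/\Zp)^g\oplus F$ for $F$ finite and $g$ the genus of $X_L$; hence $\br(X_L)/\br_0(X_L)$ typically carries an infinite divisible part when $g>0$, and your conclusion that $\br(X_L)/\br(k_L)$ is finite fails. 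The paper sidesteps this entirely by citing \cite{HSSz15}*{Lemma 3.2(2)} for the fact that $\Sha^2_{\og}(T)$ has finite exponent for any torus $T$ over a $p$-adic function field. Once you invoke that lemma (applied either to $T_1$ over $K$ directly, or to $\gm$ over $L$), your argument collapses to the paper's two-line proof.
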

\proof
Let $L|K$ be a finite Galois extension that splits both $T_1$ and $T_2$.
Then for $i=1,2$, the $L$-tori $T_{i,L}=T_i\times_KL$ are products of $\gm$,
and $H^1(L,T_{2,L})=0$ by Hilbert's theorem $90$.
The \distri $T_1\to T_2\to C\to T_1[1]$ induces a \cmdm
\begin{displaymath}
  \xymatrix{
  & \BBH^1(K,C)\ar[r]^-{\dif}\ar[d]_{\res} & H^2(K,T_1)\ar[d]^{\res}\\
  0 \ar[r] & \BBH^1(L,C_L)\ar[r] & H^2(L,T_{1,L})
  }
\end{displaymath}
where $C_L=[T_{1,L}\to T_{2,L}]$.
Recall \cite{HSSz15}*{Lemma 3.2(2)} that $\Sha^2_{\og}(T_1)$ is of finite exponent,
and hence a \rescores argument shows that $\Sha^1_{\og}(C)$ is of finite exponent.
\qed

The following statement on compact support \hchlg is the analogue of \cite{HSz16}*{Proposition 3.1}.

\begin{prop}\label{result: l-e-s cpt-supp and 3 arrows lemma}
Let $U\subset X_0$ be a non-empty open subset.
\benuma
\item Let $V\subset U$ be a further non-empty open subset. There is an exact sequence
$$\cdots\to \BBH^i_c(V,\CC)\to \BBH^i_c(U,\CC)\to \tstopslim_{v\in U\setminus V}\BBH^i(\kappa(v),i_v^*\CC)\to \BBH^{i+1}_c(V,\CC)\to\cdots$$
where $i_v:\e\kappa(v)\to U$ is the closed immersion.

\item There is an exact sequence of \hchlg groups
$$\cdots\to \BBH^i_c(U,\CC)\to \BBH^i(U,\CC)\to \tstopslim_{v\notin U}\BBH^i(K_v^h,C)\to \BBH^{i+1}_c(U,\CC)\to \cdots$$
where $K_v^h$ is the Henselization of $K$ with respect to the place $v$ and by abuse of notation we write $C$ for the pull-back of $\CC$ by the natural morphism $\e K_v^h\to U$.

\item There is an exact sequence for $i\ge 1$:
$$\cdots\to \BBH^i_c(U,\CC)\to \BBH^i(U,\CC)\to \tstopslim_{v\notin U}\BBH^i(K_v,C)\to \BBH^{i+1}_c(U,\CC)\to \cdots$$

\item \textup{(Three Arrows Lemma).} There is a \cmdm
\[\xm{
\BBH^i_c(V,\CC)\ar[r]\ar[d] & \BBH^i_c(U,\CC)\ar[d]\\
\BBH^i(V,\CC) & \BBH^i(U,\CC).\ar[l]
}\]
\eenum
\end{prop}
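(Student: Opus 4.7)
The plan is to deduce all four statements from standard localization triangles in \'etale hypercohomology on $X$, transported through the extension-by-zero and derived pushforward functors. For (1), I would begin with the triangle
\[
j_!(\CC|_V) \longrightarrow \CC|_U \longrightarrow \tstopslim_{v \in U\setminus V}(i_v)_* i_v^*\CC \longrightarrow j_!(\CC|_V)[1]
\]
on the small \'etale site of $U$, where $j\colon V \hra U$ is the open immersion. Applying the exact functor $k_{0,!}$ along $k_0\colon U \hra X$ and using $(k_0 \circ j)_! = k_{0,!} \circ j_!$ converts this into a \distri on $X$, whose \les of hypercohomology is exactly the sequence in (1); the $v$-th summand becomes $\BBH^i(\kappa(v), i_v^*\CC)$ since the extension by zero of a skyscraper is again a skyscraper.

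For (2), I would apply the triangle $j_!\Lambda \to \Lambda \to i_*i^*\Lambda$ on $X$ to $\Lambda = Rj_*\CC$, where $j\colon U \hra X$ and $i\colon Z = X \setminus U \hra X$. Since $j^*Rj_*\CC \simeq \CC$, the hypercohomology of the first two terms over $X$ yields $\BBH^i_c(U,\CC)$ and $\BBH^i(U,\CC)$ respectively. At each $v \in Z$ the complex $i_v^* Rj_*\CC$ on $\e\kappa(v)$ computes $R\Gamma(\e K_v^h, C)$ by the standard description of the stalk of $Rj_*$ at a Henselian local ring, so the third term reads $\tstopslim_{v \notin U}\BBH^i(K_v^h,C)$. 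Part (3) then follows at once from (2) together with \cref{lemma: Henselization vs completion}(2), which supplies $\BBH^i(K_v^h,C) \simeq \BBH^i(K_v,C)$ for $i \ge 1$.

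For (4), let $j_V\colon V \hra X$ and $j_U\colon U \hra X$ be the open immersions. There is a canonical chain of morphisms in the derived category of sheaves on $X$:
\[
j_{V,!}(\CC|_V) \longrightarrow j_{U,!}\CC \longrightarrow Rj_{U,*}\CC \longrightarrow Rj_{V,*}(\CC|_V).
\]
A direct verification using the units and counits of the $(j_!,j^*)$ and $(j^*,Rj_*)$ adjunctions shows that this composite agrees with the canonical morphism $j_{V,!}(\CC|_V) \to Rj_{V,*}(\CC|_V)$. Taking hypercohomology on $X$ then yields the commutativity claimed in (4).

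The main technical obstacle, as I see it, is justifying the identification $i_v^* Rj_*\CC \simeq R\Gamma(\e K_v^h, C)$ at the level of complexes used in (2). For a single constructible sheaf this is classical; for the short complex $\CC = [\CT_1 \to \CT_2]$ one reduces to that case by d\'evissage along the \distri $\CT_1 \to \CT_2 \to \CC \to \CT_1[1]$ and the five-lemma, invoking the analogous statement for a single torus essentially contained in \cite{HSz16}*{Corollary 3.2}. Once this identification is secured, the remaining content of the proposition consists of routine diagram chases.
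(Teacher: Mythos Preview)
Your approach is correct and essentially parallels the paper's own proof. The paper phrases (2) via the support sequence for $j_!\CC$ on $X$ (then citing \cite{MilneADT}*{II, Lemma~2.4} for the identification of the local terms) rather than via the localization triangle for $Rj_*\CC$, and for (4) it invokes the description $\BBH^i_c(U,\CC)\simeq\Ext^i_X(\Z,j_!\CC)$ and the argument of \cite{HSz16}*{Proposition 3.1(3)} rather than your chain of adjunction maps; but these are cosmetic rephrasings of the same underlying facts, and (1), (3) are identical.

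One small imprecision worth fixing: the displayed identification ``$i_v^* Rj_*\CC \simeq R\Gamma(\e K_v^h, C)$'' is not well-typed, since the left-hand side is a complex of sheaves on $\e\kappa(v)$ while the right-hand side is a complex of abelian groups. What you actually need (and what the limit argument over \'etale neighbourhoods of $v$ gives) is $R\Gamma\big(\kappa(v),\,i_v^*Rj_*\CC\big)\simeq R\Gamma(K_v^h,C)$. Also, the reference \cite{HSz16}*{Corollary 3.2} concerns the comparison $\BBH^i(K_v^h,-)\simeq\BBH^i(K_v,-)$ used in (3), not the stalk computation in (2); the latter is the standard base-change to the Henselian local ring, valid already for a single torus and then extended to $\CC$ by d\'evissage as you indicate.
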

\proof Actually the proofs follow from \cite{HSz16}*{Proposition 3.1} after replacing \chlg by \hchlgbk.
\benuma
\item
Applying \cite{MilneEC}*{III.~Remark 1.30} to the open immersion $V\to U$ and the closed immersion $U\setminus V\to U$ yields the required \les for \hchlgbk.
\item
The \les for \hchlg associated to the open immersion $j:U\to X$ reads as
\[\cdots\to \BBH^i_{X\setminus U}(X,j_!\CC)\to \BBH^i(X,j_!\CC)\to \BBH^i(U,\CC)\to \BBH^{i+1}_{X\setminus U}(X,j_!\CC)\to \cdots\]
Now the same argument as \cite{MilneADT}*{II. Lemma 2.4} implies the required long exact sequence.
\item
Applying \cref{lemma: Henselization vs completion}(2) to the previous \les yields the desired long exact sequence.
\item
There is an isomorphism of \hchlgs
$\BBH^i_c(U,\CC)=\BBH^i(X,j_{!}\CC)\simeq \Ext^i_X(\Z,j_{!}\CC)$ by \cite{MilneEC}*{III. Remark 1.6(e)} (where $j:U\to X$ denotes the open immersion),
thus the same argument as \cite{HSz16}*{Proposition 3.1(3)} completes the proof.
\qed
\eenum

The following result provides both the finiteness of $\Sha^1(C)$ and a crucial point in the proof of global duality.

\begin{lem}\label{result: D-2(Gm) finite exponent}
Let $U\subset X_0$ be a non-empty open subset,
and put $\rmD^2_K(U,\CT)=\Im\big(H^2_c(U,\CT)\to H^2(K,T)\big)$ for any $U$-torus $\CT$.
Then there exists a non-empty open subset $U_2\subset X_0$ such that $\rmD^2_K(U,\CT)$ is of finite exponent
for any non-empty open subset $U\subset U_2$.
\end{lem}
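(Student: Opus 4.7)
The plan is to reduce the statement, via a flasque resolution of $\CT$, to analogous finite-exponent claims for a quasi-trivial torus and a flasque torus, which are in turn handled by Shapiro's lemma and by the duality machinery of this section.

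After shrinking $X_0$ if necessary, one spreads out a flasque resolution $1\to \CR\to \CQ\to \CT\to 1$ (in the sense of \cite{CT08-resolution-flasque}) to a non-empty open $U_2\subset X_0$, with $\CQ$ quasi-trivial of the form $\prod_i R_{V_i/U_2}\gm$ and $\CR$ flasque. For any non-empty open $U\subset U_2$, this yields a commutative diagram with exact rows
\[
\xymatrix@C=10pt@R=14pt{
H^2_c(U,\CR)\ar[r]\ar[d] & H^2_c(U,\CQ)\ar[r]\ar[d] & H^2_c(U,\CT)\ar[r]\ar[d] & H^3_c(U,\CR)\ar[d]\\
H^2(K,R)\ar[r] & H^2(K,Q)\ar[r] & H^2(K,T)\ar[r] & H^3(K,R).
}
\]
Writing $\rmD^i_K(U,\cdot)$ for the image of the corresponding vertical arrow, a careful diagram chase (combined with control on kernels of the vertical maps via the long exact sequences of \cref{result: l-e-s cpt-supp and 3 arrows lemma}) shows that if both $\rmD^2_K(U,\CQ)$ and $\rmD^3_K(U,\CR)$ have finite exponent, then so does $\rmD^2_K(U,\CT)$.

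For the quasi-trivial $\CQ=\prod_i R_{V_i/U}\gm$, Shapiro's lemma gives $H^i_c(U,\CQ)\simeq\bigoplus_i H^i_c(V_i,\gm)$ and $H^i(K,Q)\simeq\bigoplus_i \br(K(V_i))$, so it suffices to bound the exponent of $\Im(H^2_c(V,\gm)\to \br(K(V)))$ for each smooth open curve $V$ over the $p$-adic base field $k$. The long exact sequence of the pair $(\ol V,\ol V\setminus V)$ embeds this image into $\br(\ol V)$, which via the Hochschild--Serre spectral sequence for $\ol V/k$ is an extension of $H^1(k,\pic(\ol V_{\ol k}))$ (finite by local Tate duality for the abelian variety $\pic^0(\ol V_{\ol k})$) by $\br(k)=\QZ$; the triviality at the boundary places $\ol V\setminus V$ then cuts out a subgroup of finite exponent.

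For $\rmD^3_K(U,\CR)$, the ``swapped'' variant of \cref{duality: AV complex of tori degree 1 1} produces, for each prime $\ell$, a pairing $H^3_c(U,\CR)\{\ell\}\times H^0(U,\CR')^{(\ell)}\to\QZ$ with divisible left kernel. Since $H^0(U,\CR')=\CR'(U)$ is finitely generated, its $\ell$-adic completion is a finitely generated $\Zl$-module, so the non-divisible quotient of $H^3_c(U,\CR)\{\ell\}$ is finite; the divisible part dies under the map to $H^3(K,R)$ once $U$ is sufficiently small, by an argument parallel to the divisible-kernel analysis carried out in the proof of \cref{duality: AV complex of tori degree 1 1}.

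The main difficulty lies in the quasi-trivial case: since both $H^2_c(V,\gm)$ and $\br(K(V))$ contain copies of $\QZ$ and have infinite exponent individually, the finite-exponent bound for the image is far from automatic and relies essentially on combining the boundary-triviality condition with purity for the Brauer group and the $p$-adic nature of the base.
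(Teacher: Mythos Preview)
Your flasque-resolution d\'evissage is more elaborate than necessary and contains a genuine gap in the diagram chase. Knowing that $\rmD^2_K(U,\CQ)$ and $\rmD^3_K(U,\CR)$ have finite exponent does not bound $\rmD^2_K(U,\CT)$: if $x\in\rmD^2_K(U,\CT)$ has image in $\rmD^3_K(U,\CR)$ killed by $N_1$, then $N_1x$ lifts to some $y\in H^2(K,Q)$, but nothing forces $y$ to lie in $\rmD^2_K(U,\CQ)$. Equivalently, a lift $N_1\tilde x\in H^2_c(U,\CT)$ need not come from $H^2_c(U,\CQ)$, since the kernel of $H^3_c(U,\CR)\to H^3(K,R)$ receives $\bigoplus_{v\notin U}H^2(K_v,R)$, which is not of finite exponent (already $\br K_v$ contains a copy of $\QZ$). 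Your sketch for $\rmD^3_K(U,\CR)$ is likewise unjustified: the assertion that ``the divisible part dies under the map to $H^3(K,R)$ for $U$ small'' has no evident source. The paper sidesteps all of this: a one-line restriction--corestriction along a splitting extension of $T$ reduces directly to $\CT=\gm$, with no auxiliary flasque torus and no $\rmD^3$ term.

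More seriously, your treatment of the $\gm$-case misidentifies the crux. You claim $H^1(k,\pic(\ol V_{\ol k}))$ is finite by local Tate duality, but Tate duality gives $H^1(k,\ulpic^0)\simeq \ulpic^0(k)^D$, and by Mattuck's theorem $\ulpic^0(k)\cong\Zp^g\times(\text{finite})$, so $H^1(k,\ulpic^0)\cong(\QZp)^g\oplus F_0$ is \emph{not} finite. Thus $\rmD^2_K(U,\gm)$ embeds only into $F_0\oplus(\QZp)^r$, which a priori has infinite exponent --- exactly the difficulty you flag in your last paragraph but do not resolve. The paper's argument is that the decreasing family $\{\rmD^2_K(U,\gm)\}_U$ of cofinite-type groups must stabilize (first for the finitely many $\ell\neq p$ dividing $|F_0|$, then separately at $p$), and the stable value equals $\Sha^2(\gm)$, which vanishes by \cite{HSSz15}*{Lemma 3.2}. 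This vanishing, combined with a further restriction--corestriction over a finite extension $k'|k$ where $X$ acquires a rational point, is the essential input your sketch is missing.
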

\begin{proof}
By a \rescores argument, it will be sufficient to show that $\rmD^2_K(U,\BBG_{m})$ is of finite exponent.
By \cite{Gro68BrauerIII}*{pp.~96, (2.9)}, there is an \ttes
\[H^0(k,\ulpic_{X/k})\to \br k\to \br X\to H^1(k,\ulpic_{X/k})\to H^3(k,\gm).\]
Since $k$ is a $p$-adic local field, we conclude $\br X/\br_0X\simeq H^1(k,\ulpic_{X/k})$ for cohomological dimension reasons.
Recall that there is a canonical \ses
$0\to \ulpic^{\circ}_{X/k}\to \ulpic_{X/k}\to \ulZ\to 0$,
thus $H^1(k,\ulpic_{X/k})$ is a quotient of $H^1(k,\ulpic^{\circ}_{X/k})$ where $\ulpic^{\circ}_{X/k}$ is an abelian variety.
But $H^1(k,\ulpic^{\circ}_{X/k})$ is dual to $\ulpic^{\circ}_{X/k}(k)$\footnote{Note that $\ulpic^{\circ}_{X/k}$ is the Jacobian of a curve, so it is isomorphic to its dual.} by Tate duality over local fields
\cite{MilneADT}*{Chapter I, Corollary 3.4},
we deduce that $H^1(k,\ulpic^{\circ}_{X/k})\simeq F_0\bigoplus (\QZp)^{\oplus r}$ with $F_0$ a finite abelian group by Mattuck's theorem (see \cite{Mattuck55} and \cite{MilneADT}*{pp.~41}).

Suppose first there is a rational point $e\in X(k)$ on $X$.
Let $e^*:\br X\to \br k\simeq\QZ$ be the induced map and put $\br_e X=\{\al\in\br X\ |\ e^*(\al)=0\}$.
Note that in this case the map $\br k\to \br X$ induced by the structural morphism $X\to \e k$ is injective and
there is an isomorphism $H^1(k,\ulpic^{\circ}_{X/k})\simeq H^1(k,\ulpic_{X/k})$.
Thus there is a split \ses $0\to \br_eX\to \br X\to \br k\to 0$ and consequently $\br_eX\simeq\br X/\br k$.
Moreover, if $e\notin U$, then $\rmD^2_K(U,\gm)\subset\ker\big(\br U\to \bigoplus_{v\notin U}\br K_v\big)\subset\br_e X$.
It follows that there is an injective map $\rmD^2_K(U,\gm)\to F_0\bigoplus (\QZp)^{\oplus r}$.
Next, we show that there is a non-empty open subset $U_2\subset X_0$ such that the decreasing sequence $\{\rmD^2_K(U,\gm)\{\ell\}\}$ is stable for $U\subset U_2$.
By \cite{HSz16}*{Proposition 3.4}, the group $H^2_c(U,\gm)$ is of cofinite type and hence so is $\rmD^2_K(U,\gm)$.
Since $F_0$ is finite, there exists \textit{only} finitely many $\ell\ne p$ such that $\ell$ divides the order of $F_0$.
As a consequence, there exists a non-empty open subset $U_1\subset X_0$ (which is independent of $\ell$) such that $\rmD^2_K(U,\gm)\{\ell\}=\rmD^2_K(U_1,\gm)\{\ell\}$ holds for any non-empty open subset $U\subset U_1$ and for each $\ell\ne p$ by \cite{HSz16}*{Lemma 3.7}.
Again the decreasing sequence $\{\rmD^2_K(U,\gm)\{p\}\}_{U\subset U_1}$ stabilizes,
so there exists some $U_2\subset U_1$ such that $\rmD^2_K(U,\gm)=\rmD^2_K(U_2,\gm)$ for all $U\subset U_2$.
Note that $\Sha^2(\gm)$ is the direct limit of $\rmD^2(U,\gm)$.
Letting $U$ run through all non-empty open subsets of $U_2$ yields $\rmD^2_K(U_2,\gm)=\Sha^2(\gm)=0$, where the vanishing of $\Sha^2(\gm)$ is a consequence of \cite{HSSz15}*{Lemma 3.2}.

\ingen there exists a finite Galois extension $k'|k$ such that $X(k')\ne\es$.
Put $X_{k'}\ce X\times_kk'$ and $U_{k'}\ce U\times_kk'$.
Let $K'$ be the function field of $X_{k'}$.
For any sufficiently small non-empty open subset $U$ of $X$,
we know that $\rmD^2_{K'}(U_{k'},\gm)\subset \br K'$ vanishes by the previous paragraph.
Therefore a \rescores argument implies that $\rmD^2_K(U,\gm)\subset \br K$ has finite exponent.
\end{proof}

\begin{prop}\label{result: D-1 and Sha-1 finite}
We put $\BBD_K^1(U,\CC)\ce\Im\big(\BBH^1_c(U,\CC)\to \BBH^1(K,C)\big)$.
Then there exists a non-empty open subset $U_0$ of $X_0$ such that
\be\label{equation: relation between D-one and Sha}
  \BBD_K^1(U,\CC)=\BBD_K^1(U_0,\CC)=\Sha^1(C).
\ee
for each non-empty open subset $U\subset U_0$.
Moreover, the group $\Sha^1(C)$ is finite.
\end{prop}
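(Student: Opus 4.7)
The plan is to reduce to the torus case by means of the distinguished triangle $T_1 \xrightarrow{\rho} T_2 \to C \to T_1[1]$, which induces compatible long exact sequences of hypercohomology over $K$, over each $K_v$, and on compact support over $U$. Consider the commutative diagram of exact rows
\[
\xymatrix@C=12pt{
H^1_c(U, \CT_2) \ar[r] \ar[d] & \BBH^1_c(U, \CC) \ar[r] \ar[d] & H^2_c(U, \CT_1) \ar[r] \ar[d] & H^2_c(U, \CT_2) \ar[d] \\
H^1(K, T_2) \ar[r] & \BBH^1(K, C) \ar[r] & H^2(K, T_1) \ar[r] & H^2(K, T_2).
}
\]
Setting $\rmD_K^i(U, \CT) \ce \Im\bigl(H^i_c(U, \CT) \to H^i(K, T)\bigr)$, the torus analogue of the present lemma gives $\rmD_K^1(U, \CT_2) = \Sha^1(T_2)$ for $U$ sufficiently small (cf.~\cite{HSz16}), while \cref{result: D-2(Gm) finite exponent} combined with \cite{HSz16}*{Lemma 3.7} applied $\ell$-adically (and a Galois descent as in the proof of \cref{result: D-2(Gm) finite exponent}) should sharpen to $\rmD_K^2(U, \CT_i) = \Sha^2(T_i)$ for $U$ small enough. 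A diagram chase then forces $\BBD_K^1(U, \CC)$ to stabilize for $U \subset U_0$ at a value squeezed between the finite groups $\Sha^1(T_2)$ and $\Sha^2(T_1)$, so $\BBD_K^1(U_0,\CC)$ is finite; comparing with the analogous image sequence at the generic point identifies this stable value with $\Sha^1(C)$. In particular, $\Sha^1(C)$ is finite, being a subquotient of $\Sha^1(T_2) \oplus \Sha^2(T_1)$ (both finite by \cref{lemma: property of torsion groups I}(4)).

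To pin down the stable value via an independent lifting argument, I would also verify directly that $\Sha^1(C) \subset \BBD_K^1(U_0, \CC)$: pick representatives $\be_1, \ldots, \be_n \in \BBH^1(K, C) = \varinjlim_U \BBH^1(U, \CC)$ of the finite group $\Sha^1(C)$, lift each to $\al_j \in \BBH^1(U_j, \CC)$ for some non-empty open $U_j \subset X_0$, and shrink $U_0$ to $U_0 \cap \bigcap_j U_j$. Each $\be_j$ is trivial at $K_v$ for every $v \notin U_0$ (as $\be_j \in \Sha^1(C)$), so by the long exact sequence of \cref{result: l-e-s cpt-supp and 3 arrows lemma}(3), the image of $\al_j$ in $\BBH^1(U_0, \CC)$ lifts further to $\BBH^1_c(U_0, \CC)$. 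Hence $\Sha^1(C) \subset \BBD_K^1(U_0, \CC)$, which combined with the sandwich inclusion completes the proof of (\ref{equation: relation between D-one and Sha}).

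The main obstacle is to upgrade \cref{result: D-2(Gm) finite exponent} from the bounded-exponent statement to the strict stabilization $\rmD_K^2(U, \CT_i) = \Sha^2(T_i)$ for arbitrary $K$-tori $\CT_i$, since it is this strengthening (not mere bounded exponent) that collapses the sandwich to $\Sha^1(C)$. The required refinement splits the $\ell$-primary components, descends to a finite Galois extension where $X$ acquires a rational point (as in \cref{result: D-2(Gm) finite exponent}), uses the cofinite-type structure of $H^2(K, T_i)$ together with Mattuck's theorem to isolate finitely many contributing primes, and then applies \cite{HSz16}*{Lemma 3.7} to the resulting decreasing sequence of subgroups of a cofinite-type torsion group of bounded exponent.
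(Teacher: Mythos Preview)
Your proposal has two genuine gaps, and both are avoided by a much simpler argument the paper uses.

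\textbf{The subquotient claim is false.} You assert that $\Sha^1(C)$ is a subquotient of $\Sha^1(T_2)\oplus\Sha^2(T_1)$, but the d\'evissage does not give this. The map $\Sha^1(C)\to\Sha^2(T_1)$ is well-defined, yet its kernel consists of classes $\alpha$ that lift to some $\beta\in H^1(K,T_2)$ whose localizations $\beta_v$ lie in the image of $H^1(K_v,T_1)$; there is no reason such $\beta$ should lie in $\Sha^1(T_2)$. So you have not proved $\Sha^1(C)$ finite, and your subsequent lifting argument (which enumerates representatives $\beta_1,\dots,\beta_n$) becomes circular.

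\textbf{The strengthening $\rmD_K^2(U,\CT_i)=\Sha^2(T_i)$ is unnecessary.} You correctly flag this as the main obstacle, but the paper never needs it. Here is the missing idea: once you know (by d\'evissage, exactly as you set up) that $\BBD_K^1(U,\CC)$ has finite exponent $N$, the surjection $\BBH^1_c(U,\CC)\twoheadrightarrow\BBD_K^1(U,\CC)$ factors through $\BBH^1_c(U,\CC)/N$, and this last group embeds into the finite group $\BBH^1_c(U,\CC\dtp\Z/N)$ via the Kummer-type triangle. Hence $\BBD_K^1(U,\CC)$ is finite \emph{directly}, for every small $U$. The decreasing family of finite groups then stabilizes, and since $\BBD_K^1(U,\CC)\subset\ker\big(\BBH^1(K,C)\to\prod_{v\notin U}\BBH^1(K_v,C)\big)$ by \cref{result: l-e-s cpt-supp and 3 arrows lemma}(3), letting $U$ shrink identifies the stable value with $\Sha^1(C)$. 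No sandwich, no upgrade of \cref{result: D-2(Gm) finite exponent}, and no separate lifting step are required.
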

\begin{proof}
By \cref{result: D-2(Gm) finite exponent}, the group $\rmD^2_K(U,\CT_1)$ is of finite exponent for $U$ sufficiently small.
Since $H^1(K,T_2)$ is of finite exponent, it follows that $\BBD^1_K(U,\CC)$ is of \finiexp (say $N$) by d\'evissage.
\inpart the epimorphism $\BBH^1_c(U,\CC)\to \BBD^1_K(U,\CC)$ factors through $\BBH^1_c(U,\CC)\to \BBH^1_c(U,\CC)/N$.
Recall that $\BBH^1_c(U,\CC)/N$ is a subgroup of the finite group $\BBH^1_c(U,\CC\deots\Z/N)$, hence its quotient $\BBD^1_K(U,\CC)$ is finite.

For non-empty open subsets $V\subset U\subset X_0$ of $X_0$,
we have $\BBD_K^1(V,\CC)\subset \BBD_K^1(U,\CC)$ by covariant functoriality of $\BBH^1_c(-,\CC)$.
The decreasing sequence $\{\BBD_K^1(U,\CC)\}_{U\subset X_0}$ of finite abelian groups must be stable, hence there exists a non-empty open subset $U_0$ of $X_0$ such that $\BBD_K^1(U,\CC)=\BBD_K^1(U_0,\CC)$ for each non-empty open subset $U\subset U_0$.
Note that $\BBD^1_K(U,\CC)\subset \ker\big(\BBH^1(K,C)\to \prod_{v\notin U}\BBH^1(K_v,C)\big)$ by \cref{result: l-e-s cpt-supp and 3 arrows lemma}(3).
Letting $U$ run through all non-empty open subset of $U_0$ implies that $\BBD_K^1(U,\CC)=\BBD_K^1(U_0,\CC)=\Sha^1(C)$.
Since the former two groups are finite, so is $\Sha^1(C)$.
\end{proof}

Applying \cref{result: D-1 and Sha-1 finite} to both $C$ and $C'$, we obtain a non-empty open subset $U_0$ of $X$ such that (\ref{equation: relation between D-one and Sha}) holds for both $C$ and $C'$.
In the sequel, we fix such a non-empty open subset $U_0$ of $X$.

We will need an auxiliary Grothendieck--Serre conjecture style result (for example, see \cite{CSan87a}*{Theorem 4.1}).
Let $\CO$ be a Henselian regular local integral domain with fraction field $\Omega$ and residue field $\kappa$.
Let $\CJ=[\CT_1\stra{\rho} \CT_2]$ be a complex of $\CO$-tori concentrated in degree $-1$ and $0$.
Let $T_i=\CT_i\times_{\CO}\Omega$ be the generic fibre of $\CT_i$ for $i=1,2$ and let $J=[T_1\stra{\rho} T_2]$ be the associated complex in degree $-1$ and $0$.

\begin{prop}\label{result: GS conj for complex of tori}
The natural \ttai $\BBH^1(\CO,\CJ)\to \BBH^1(\Omega,J)$ induced by $\CO\subset\Omega$ is injective.
\end{prop}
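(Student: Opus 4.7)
My plan is to apply the hypercohomology long exact sequence attached to the distinguished triangle $\CT_1\to\CT_2\to\CJ\to\CT_1[1]$, reduce the problem to the Grothendieck--Serre type injectivity for tori over Henselian regular local rings, and then dispose of the residual piece---coming from the fact that $\CT_2(\CO)\to T_2(\Omega)$ is not surjective in general---by an auxiliary torsor argument.

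First, the triangle yields a commutative diagram with exact rows
\[
\xymatrix@R=12pt{
\CT_1(\CO)\ar[r]\ar[d] & \CT_2(\CO)\ar[r]^-{\alpha}\ar[d] & \BBH^1(\CO,\CJ)\ar[r]^-{\beta}\ar[d] & H^1(\CO,\CT_1)\ar[d]\\
T_1(\Omega)\ar[r] & T_2(\Omega)\ar[r]^-{\alpha'} & \BBH^1(\Omega,J)\ar[r]^-{\beta'} & H^1(\Omega,T_1).
}
\]
Since each $\CT_i$ is separated over $\CO$, the first two vertical maps are injective, and the fourth is injective by the Grothendieck--Serre theorem for $\CO$-tori (Colliot-Th\'el\`ene--Sansuc). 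An easy chase then shows that any $x$ in the kernel of the third vertical must equal $\alpha(t)$ for some $t\in\CT_2(\CO)$ whose restriction $t|_\Omega$ lies in the image of $\rho_\Omega$; write $t|_\Omega=\rho_\Omega(s)$ with $s\in T_1(\Omega)$.

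The remaining task is to lift $s$ to an $\CO$-point of $\CT_1$ mapping to $t$. I would factor $\rho=\iota\circ p$, where $\CT_1'\ce\Im\rho$ is a subtorus of $\CT_2$, the map $p\colon\CT_1\to\CT_1'$ is a faithfully flat epimorphism with kernel $K\ce\ker\rho$ (a smooth group of multiplicative type, since $\tz\CO=0$), and $\iota\colon\CT_1'\hra\CT_2$ is a closed immersion. Because $\CO$ is a domain and $t|_\Omega$ factors through $\CT_1'$, the pullback ideal $t^*\CI$ defining $\iota\subset\CT_2$ is a torsion submodule of $\CO$ and hence vanishes, so $t$ itself lifts to an $\CO$-point of $\CT_1'$. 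The fibre $P\ce\CT_1\times_{\CT_1',t}\e\CO$ is then a $K$-torsor over $\CO$ which is trivialised by $s$ over $\Omega$, and its class in $H^1(\CO,K)$ lies in the kernel of $H^1(\CO,K)\to H^1(\Omega,K_\Omega)$. Invoking a Grothendieck--Serre type injectivity for the group $K$ of multiplicative type, one concludes that $P$ is trivial, producing an $\CO$-point $\wt s\in\CT_1(\CO)$ with $\rho(\wt s)=t$, hence $x=\alpha(t)=0$.

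The main obstacle I foresee is justifying injectivity of $H^1(\CO,K)\to H^1(\Omega,K_\Omega)$ for the possibly non-connected group $K$. One reduces via the connected-\'etale sequence $1\to K^\circ\to K\to K/K^\circ\to 1$ to the torus case for $K^\circ$ (classical Grothendieck--Serre) together with a finite-\'etale statement (a consequence of Zariski--Nagata purity on $\e\CO$); making this combination genuinely rigorous for a regular Henselian local base of arbitrary dimension is the principal technical subtlety of the argument.
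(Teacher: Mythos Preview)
Your proof has a fundamental indexing error: the exact row you wrote down is the one surrounding $\BBH^0(\CO,\CJ)$, not $\BBH^1(\CO,\CJ)$. From the distinguished triangle $\CT_1\to\CT_2\to\CJ\to\CT_1[1]$ (with $\CJ$ concentrated in degrees $-1,0$), the long exact sequence reads
\[
\cdots\to H^i(\CO,\CT_1)\to H^i(\CO,\CT_2)\to\BBH^i(\CO,\CJ)\to H^{i+1}(\CO,\CT_1)\to\cdots,
\]
so the portion around $\BBH^1$ is
\[
H^1(\CO,\CT_1)\to H^1(\CO,\CT_2)\to\BBH^1(\CO,\CJ)\to H^2(\CO,\CT_1)\to H^2(\CO,\CT_2),
\]
not $\CT_1(\CO)\to\CT_2(\CO)\to\BBH^1(\CO,\CJ)\to H^1(\CO,\CT_1)$. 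Everything that follows in your argument---lifting the $\Omega$-point $s$ of $T_1$, the torsor $P$ under $K=\ker\rho$, the appeal to purity for finite \'etale groups---is therefore a proof of injectivity for $\BBH^0$, which is not what is being asked.

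With the correct sequence, the ingredients you would need are injectivity of $H^2(\CO,\CT_1)\to H^2(\Omega,T_1)$ (not just $H^1$), together with a diagram chase that must cope with the failure of $H^1(\CO,\CT_2)\to H^1(\Omega,T_2)$ to be surjective. The paper sidesteps both issues by first replacing $\CJ$ with a quasi-isomorphic complex $[\CM\to\CQ]$ in which $\CQ$ is quasi-trivial; then $H^1(\CO,\CQ)=H^1(\Omega,Q)=0$ by Shapiro and Hilbert~90, so $\BBH^1$ injects into $H^2(\CO,\CM)$, and the problem reduces cleanly to $H^2$-injectivity for a single torus $\CM$. That last step is handled via a further resolution $1\to\CM\to\CQ_1\to\CQ_2\to1$ with $\CQ_1$ quasi-trivial, using Grothendieck--Serre for $\CQ_2$ in degree~1 and Brauer-group injectivity (via Shapiro) for $\CQ_1$ in degree~2.
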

\begin{proof}
Let $q:\CQ\to \CT_2$ be an epimorphism of $\CO$-tori with $\CQ$ being quasi-trivial (for example, we may take a flasque resolution of $\CT_2$, see \cite{CSan87a}*{(1.3.3)}).
Let $\CM\ce \CQ\times_{\CT_2}\CT_1$ and
let $\CQ\times_{\CT_2}\CT_1\to \CT_2$ be the map $(r,t_1)\mt q(r)\rho(t_1)\ui$.
Let $\pr_1:\CM\to \CQ$ and $\pr_2:\CM\to \CT_1$ be the respective canonical projections.
By construction of $\CM$, we have $q\circ\pr_1=\rho\circ\pr_2$.
A direct verification yields isomorphisms $\ker\pr_1\simeq\ker\rho$ and $\cok\pr_1\simeq\cok\rho$, that is,
$\CJ_0=[\CM\to \CQ]$ is quasi-isomorphic to $\CJ=[\CT_1\to \CT_2]$.
Note that $Q=\CQ\times_{\CO}\Omega$ is a quasi-trivial $\Omega$-torus and being faithfully flat is stable under base change, the same argument as above yields that $J_0=[M\to Q]$ with $M=\CM\ots_{\CO}\Omega$ is quasi-isomorphic to the complex $J=[T_1\to T_2]$.
Thus it suffices to show $\BBH^1(\CO,\CJ_0)\to \BBH^1(\Omega,J_0)$ is injective.

By construction $\CQ$ is a quasi-trivial $\CO$-torus,
thus $H^1(\CO,\CQ)=H^1(\kappa,\CQ)=0$ and $H^1(\Omega,Q)=0$ by Shapiro's lemma and Hilbert's theorem $90$.
Consequently it will be sufficient to show $H^2(\CO,\CM)\to H^2(\Omega,M)$ is injective by applying d\'evissage to the \distri $\CM\to \CQ\to \CJ_0\to \CM[1]$.
Take an exact sequence $1\to \CM\to \CQ_1\to \CQ_2\to 1$ over $\CO$ with $\CQ_1$ a quasi-trivial $\CO$-torus and $\CQ_2$ an $\CO$-torus (for example, see \cite{CSan87a}*{pp.~158, (1.3.1)}).
It induces the \cmdm below with exact rows
\begin{displaymath}
  \xm@R15pt{
  0\ar[r] & H^1(\CO,\CQ_2)\ar[r]\ar[d] & H^2(\CO,\CM)\ar[r]\ar[d] & H^2(\CO,\CQ_1)\ar[d]\\
  0\ar[r] & H^1(\Omega,Q_2)\ar[r] & H^2(\Omega,M)\ar[r] & H^2(\Omega,Q_1)
  }
\end{displaymath}
where $Q_i=\CQ_i\times_{\CO}\Omega$ for $i=1,2$.
The left vertical arrow is injective by \cite{CSan87a}*{Theorem 4.1} and
the right one is injective by Shapiro's lemma and the injectivity for Brauer groups by \cite{MilneEC}*{IV, Corollary 2.6},
therefore the middle one is also injective.
\end{proof}

\begin{cor}\label{cor: Grothendieck--Serre for complexes over Hensel of DVR}
The \ttai $\Phi_v:\BBH^1(\CO_{U,v}^h,\CC)\to \BBH^1(K_v^h,C)$ induced by the inclusion $\CO^h_{U,v}\to K_v^h$ is injective for $v\in U^{(1)}$.
\end{cor}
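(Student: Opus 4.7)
The plan is to deduce this corollary directly from the preceding \cref{result: GS conj for complex of tori} by a routine specialization. I would set $\CO \ce \CO_{U,v}^h$ and $\Omega \ce K_v^h$, and then apply the proposition to the restriction of the complex $\CC$ from $U$ to $\Spec \CO_{U,v}^h$.

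First I would verify the hypotheses of \cref{result: GS conj for complex of tori} in this setting. Since $v \in U^{(1)}$ is a closed point on the smooth curve $U$, the local ring $\CO_{U,v}$ is a discrete valuation ring, so its Henselization $\CO_{U,v}^h$ is a Henselian regular local integral domain (in fact a Henselian DVR). Its fraction field is precisely $K_v^h$ by definition of the Henselization of $K$ at $v$. Pulling back $\CC = [\CT_1 \to \CT_2]$ along the canonical morphism $\Spec \CO_{U,v}^h \to U$ yields a complex $\CJ = [\CT_{1,\CO_{U,v}^h} \to \CT_{2,\CO_{U,v}^h}]$ of $\CO_{U,v}^h$-tori concentrated in degree $-1$ and $0$, and its generic fibre is $C = [T_1 \to T_2]$ since the tori $T_i$ are obtained from $\CT_i$ by base change to $K$ and then to $K_v^h$.

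Then \cref{result: GS conj for complex of tori} applied to this $(\CO, \Omega, \CJ, J)$ says exactly that
\[
\BBH^1(\CO_{U,v}^h, \CC) \longrightarrow \BBH^1(K_v^h, C)
\]
is injective, which is the claim. There is essentially no obstacle here: the only thing to check is that the proposition's hypotheses are satisfied, and this is immediate from the smoothness of $U$ at the closed point $v$ together with the fact that Henselization of a DVR remains a Henselian regular local domain with fraction field the Henselization of the original fraction field.
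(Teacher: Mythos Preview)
Your proposal is correct and takes exactly the same approach as the paper, which simply says: taking $\CO=\CO_{U,v}^h$, $\Omega=K_v^h$ and $\CJ=\CC$ in \cref{result: GS conj for complex of tori} yields the desired injectivity. You have merely spelled out the hypothesis verification in more detail, which is fine.
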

\begin{proof}
Taking $\CO=\CO_{U,v}^h$, $\Omega=K_v^h$ and $\CJ=\CC$ in \cref{result: GS conj for complex of tori} yields the desired injectivity.
\end{proof}

To state a key step, we first construct a map $\bigoplus_{v\in X^{(1)}}\BBH^0(K_v^h,C)\to \BBH^1_c(U,\CC)$ for some non-empty open subset $U$ of $X_0$.
Take $\al\in \bigoplus_{v\in X^{(1)}}\BBH^0(K_v^h,C)$ supported outside some non-empty open subset $V$ of $U$, i.e. $\al_v=0$ for $v\in V$.
Applying \cref{result: l-e-s cpt-supp and 3 arrows lemma}(2) to $V$ sends $\al$ to $\BBH^1_c(V,\CC)$, and so $\al$ is sent to $\BBH^1_c(U,\CC)$ by the covariant functoriality of $\BBH^1_c(-,\CC)$.
The construction is independent of the choice of $V$ by the same argument as \cite{HSz16}*{pp.~11, (12)}.

\begin{prop}\label{result: key e-s degree 0}
There is an exact sequence
\[\tstopslim_{v\in X^{(1)}}\BBH^0(K_v^h,C)\to \BBH^1_c(U,\CC)\to \BBD_K^1(U,\CC)\to 0.\]
\end{prop}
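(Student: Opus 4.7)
The surjectivity of $\BBH^1_c(U,\CC)\to \BBD^1_K(U,\CC)$ is immediate from the definition of $\BBD^1_K$. To see that the composition with the left-hand map vanishes, take $\alpha$ in $\bigoplus_{v\in X^{(1)}}\BBH^0(K_v^h,C)$ supported outside some non-empty open $V\subset U$. By the very construction of the arrow, $\alpha$ is the pushforward to $\BBH^1_c(U,\CC)$ of an element $\alpha'\in\BBH^1_c(V,\CC)$ arising as a boundary in \cref{result: l-e-s cpt-supp and 3 arrows lemma}(2); exactness of that sequence forces $\alpha'$ to have trivial image in $\BBH^1(V,\CC)$. The Three Arrows Lemma \cref{result: l-e-s cpt-supp and 3 arrows lemma}(4) then identifies the image of $\alpha$ in $\BBH^1(K,C)$ with the image of $\alpha'$ through $\BBH^1(V,\CC)\to \BBH^1(K,C)$, hence it is zero.

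For the reverse inclusion, let $\alpha\in \BBH^1_c(U,\CC)$ have trivial image in $\BBH^1(K,C)$ and write $\beta\in\BBH^1(U,\CC)$ for its image. Using $\BBH^1(K,C)=\drl_{V\subset U}\BBH^1(V,\CC)$, choose a non-empty open $V\subset U$ such that $\beta|_V=0$ in $\BBH^1(V,\CC)$. The key step is to show $\alpha$ lies in the image of the pushforward $\BBH^1_c(V,\CC)\to\BBH^1_c(U,\CC)$; by \cref{result: l-e-s cpt-supp and 3 arrows lemma}(1) this reduces to verifying that the residue $\alpha_v\in \BBH^1(\kappa(v),i_v^*\CC)$ vanishes for each of the finitely many $v\in U\setminus V$.

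I would verify this via the commutative diagram
\[\xymatrix@R=15pt{
\BBH^1_c(U,\CC)\ar[r]\ar[d] & \BBH^1(U,\CC)\ar[r]\ar[d] & \BBH^1(V,\CC)\ar[d] \\
\BBH^1(\kappa(v),i_v^*\CC)\ar[r]^-{\simeq} & \BBH^1(\CO_{U,v}^h,\CC)\ar@{^{(}->}[r] & \BBH^1(K_v^h,C)
}\]
where the left vertical is the boundary in \cref{result: l-e-s cpt-supp and 3 arrows lemma}(1), the middle and right verticals are base change, the bottom-left isomorphism is the standard comparison between \etale cohomology of a henselian DVR and of its residue field (valid for tori, which are smooth), and the bottom-right injection is \cref{cor: Grothendieck--Serre for complexes over Hensel of DVR}. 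Both paths from $\BBH^1_c(U,\CC)$ to $\BBH^1(K_v^h,C)$ coincide with base change along $\Spec K_v^h\to U$. Since $\beta|_V=0$, the image of $\alpha$ along the top row and then down is zero, so commutativity and the injectivity of the bottom-right arrow force $\alpha_v=0$.

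With every residue vanishing, \cref{result: l-e-s cpt-supp and 3 arrows lemma}(1) produces $\alpha'\in\BBH^1_c(V,\CC)$ whose pushforward is $\alpha$; the Three Arrows Lemma identifies the image of $\alpha'$ in $\BBH^1(V,\CC)$ with $\beta|_V=0$, so \cref{result: l-e-s cpt-supp and 3 arrows lemma}(2) applied to $V$ exhibits $\alpha'$, and hence $\alpha$, as coming from $\bigoplus_{v\notin V}\BBH^0(K_v^h,C)$, completing the proof. I expect the main difficulty to be establishing commutativity of the square above — in particular, identifying the excision boundary of \cref{result: l-e-s cpt-supp and 3 arrows lemma}(1) with the natural composition of pullback to the henselisation followed by specialisation to the closed fibre.
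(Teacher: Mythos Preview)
Your proof is correct and follows essentially the same route as the paper: surjectivity by definition, the complex property via the construction of the map and the Three Arrows Lemma, and exactness in the middle by combining the excision sequence \cref{result: l-e-s cpt-supp and 3 arrows lemma}(1), the injectivity of \cref{cor: Grothendieck--Serre for complexes over Hensel of DVR}, and the Three Arrows Lemma. The only cosmetic difference is that the paper's diagram routes through $\BBH^1(K,C)\to\BBH^1(K_v,C)$ rather than through $\BBH^1(U,\CC)\to\BBH^1(V,\CC)\to\BBH^1(K_v^h,C)$, and the paper defers the commutativity you flag as the ``main difficulty'' to the analogous verification in \cite{HSz16}*{Proposition 4.2}.
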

\begin{proof}
The sequence is a complex by exactly the same argument of \cite{HSz16}*{Proposition 4.2}.
The surjectivity of the last arrow is just the definition of $\BBD_K^1(U,\CC)$.
Take $\al\in\ker\big(\BBH^1_c(U,\CC)\to \BBD_K^1(U,\CC)\big)$ and a non-empty open subset $V\subset U$.
Consider the diagram
\begin{equation*}
  \xm@R15pt{
  \BBH^1_c(V,\CC)\ar[r] & \BBH^1_c(U,\CC)\ar[r]\ar[d] & \bigoplus\limits_{v\in U\setminus V}\BBH^1(\kappa(v),i_v^*\CC)\ar[d]\\
  & \BBH^1(K,C)\ar[r] & \bigoplus\limits_{v\in U\setminus V}\BBH^1(K_v,C)
  }
\end{equation*}
where the right vertical arrow is constructed as the composite
\[
\BBH^1(\kappa(v),i_v^*\CC)\simeq \BBH^1(\CO_{U,v}^h,\CC)\to \BBH^1(K_v^h,C)\simeq\BBH^1(K_v,C).
\]
The first isomorphism is a consequence of
$H^1(\kappa(v),i_v^*\CT_i)\simeq H^1(\CO_{U,v}^h,\CT_i)$ for $i=1,2$
(see \cite{MilneADT}*{Chapter II, Proposition 1.1(b)})
and a d\'evissage argument.
The diagram commutes for the same reason as in the proof of \cite{HSz16}*{Proposition 4.2}.
Since the right vertical arrow is injective by \cref{cor: Grothendieck--Serre for complexes over Hensel of DVR},
a diagram chasing shows that $\al$ comes from $\BBH^1_c(V,\CC)$.
But $\al$ goes to zero in $H^1(K,C)$, we may take $V$ sufficiently small such that $\al$ already goes to zero in $\BBH^1(V,\CC)$ by Three Arrows Lemma,
i.e. $\al$ comes from $\bigoplus_{v\notin V}\BBH^0(K_v^h,C)$ by \cref{result: l-e-s cpt-supp and 3 arrows lemma}(2) and hence the desired sequence is indeed exact.
\end{proof}

Recall that $\BBD^1_K(U,\CC)$ is finite by \cref{result: D-1 and Sha-1 finite},
by \cref{lemma: on exactness of l-adic completion} there is an \ttes
\be\label{sequence: l-adic completion of etale cohomology}
\big(\tstopslim_{v\in X^{(1)}}\BBH^0(K_v^h,C)\big)^{(\ell)}\to \BBH^1_c(U,\CC)^{(\ell)}\to \BBD_K^1(U,\CC)^{(\ell)}\to 0.
\ee
Now we arrive at the global duality of the short complex $C$.

\begin{thm}\label{result: global duality degree 1 and 1}
There is a perfect, functorial in $C$, pairing of finite groups:
$$\Sha^1(C)\times \Sha^1(C')\to \QZ.$$
\end{thm}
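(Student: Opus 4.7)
Since $\Sha^1(C)$ and $\Sha^1(C')$ are finite by~\cref{result: D-1 and Sha-1 finite}, it suffices to show the pairing $\Sha^1(C)\{\ell\}\times\Sha^1(C')\{\ell\}\to\QZ$ is perfect for each prime $\ell$. Fix such an $\ell$ and shrink $U\subset U_0$ so that $\BBD^1_K(U,\CC)=\Sha^1(C)$, $\BBD^1_K(U,\CC')=\Sha^1(C')$, and the Artin--Verdier $\ell$-primary pairings from~\cref{duality: AV complex of tori degree 1 1} (via the Remark following it) are already perfect for both $\CC$ and $\CC'$.

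The exact sequence of~\cref{result: key e-s degree 0} for $C$, combined with~\cref{lemma: on exactness of l-adic completion} (applicable because $\Sha^1(C)$ is finite), gives upon $\ell$-adic completion
\[
\big(\tstopslim_{v\in X^{(1)}}\BBH^0(K_v^h,C)\big)^{(\ell)}\to \BBH^1_c(U,\CC)^{(\ell)}\to \Sha^1(C)\{\ell\}\to 0.
\]
Applying Pontryagin duality $(-)^D$ and substituting~\cref{duality: local Hensel degree 0 1} (identifying the $D$-dual of the left-hand term with $\big(\tstprodlim_v \BBH^1(K_v,C')\big)\{\ell\}$) together with the Artin--Verdier isomorphism $\big(\BBH^1_c(U,\CC)^{(\ell)}\big)^D\simeq \BBH^1(U,\CC')\{\ell\}$ gives
\[
0\to \Sha^1(C)\{\ell\}^D\hookrightarrow \BBH^1(U,\CC')\{\ell\}\to \big(\tstprodlim_v \BBH^1(K_v,C')\big)\{\ell\},
\]
so $\Sha^1(C)\{\ell\}^D$ is identified with the kernel $N(U)$ of the rightmost map.

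The next step is to identify $\drl_{U'\subset U}N(U')$ with $\Sha^1(C')\{\ell\}$. Restriction to the generic fibre $\BBH^1(U',\CC')\to\BBH^1(K,C')$ sends $N(U')$ into $\Sha^1(C')\{\ell\}$, because classes locally trivial at every $v\in X^{(1)}$ restrict to locally trivial classes in $\BBH^1(K,C')$. Conversely, every $\gamma\in\Sha^1(C')\{\ell\}$ lifts to $\tilde\gamma\in\BBH^1_c(U',\CC')$ via $\BBD^1_K(U',\CC')=\Sha^1(C')$; after shrinking $U'$ further (using $\drl_{U'}\BBH^1(U',\CC')=\BBH^1(K,C')$ to absorb the discrepancy between $\ell$-torsion in $\BBH^1(U',\CC')$ and in $\BBH^1(K,C')$) we may arrange the image of $\tilde\gamma$ to be $\ell$-torsion, producing a preimage in $N(U')$. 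The injectivity of the resulting map $\drl N(U')\to \Sha^1(C')\{\ell\}$ rests on the Grothendieck--Serre style~\cref{cor: Grothendieck--Serre for complexes over Hensel of DVR} at places $v\in U'$. Passing to the direct limit yields an injection $\Sha^1(C)\{\ell\}^D\hookrightarrow\Sha^1(C')\{\ell\}$.

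Running the same argument with $C$ and $C'$ swapped produces the symmetric injection $\Sha^1(C')\{\ell\}^D\hookrightarrow\Sha^1(C)\{\ell\}$. Since $|A|=|A^D|$ for finite abelian groups, both injections must be isomorphisms, and summing over $\ell$ delivers the claimed perfect pairing of finite groups; functoriality in $C$ is inherited from each building block used in the identification. The main obstacle in this plan is the matching of $\drl N(U')$ with $\Sha^1(C')\{\ell\}$: one must simultaneously manage the failure of $\BBH^1(U',\CC')\to \BBH^1(K,C')$ to preserve $\ell$-torsion on the nose, and invoke the Grothendieck--Serre injectivity to rule out unramified classes that survive at all completions but are not seen by $\Sha^1$.
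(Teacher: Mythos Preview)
Your overall architecture is the same as the paper's: set up the commutative square relating $\BBH^1(U,\CC')\{\ell\}$ to the dual of $\BBH^1_c(U,\CC)^{(\ell)}$ via \cref{result: key e-s degree 0} and local duality, identify the left kernel with $\Sha^1$ after passing to the limit over $U$, and then swap $C$ and $C'$ to finish by a counting argument. The difference is that you run the diagram ``dualized'' compared with the paper (you start from $C$ and land in $C'$, the paper does the reverse), which is harmless.

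There is, however, a genuine circularity. You assume at the outset that, after shrinking $U$, the Artin--Verdier map $\BBH^1(U,\CC')\{\ell\}\to\big(\BBH^1_c(U,\CC)^{(\ell)}\big)^D$ is an isomorphism, citing the Remark after \cref{duality: AV complex of tori degree 1 1}. But that Remark is explicitly a forward reference to the theorem you are proving: \cref{duality: AV complex of tori degree 1 1} only shows the kernel $\drl_n\BBK_n(U)$ is \emph{divisible}, and its vanishing is established precisely inside the proof of the present theorem, by observing that after the direct limit over $U$ this divisible group injects into the finite group $\Sha^1(C')\{\ell\}$. Without that step you cannot identify $\big(\BBH^1_c(U,\CC)^{(\ell)}\big)^D$ with $\BBH^1(U,\CC')\{\ell\}$, and hence cannot identify $\Sha^1(C)\{\ell\}^D$ with $N(U)$ as you do. The fix is exactly the paper's: keep the divisible kernel in the diagram, pass to the direct limit over $U$, and only then kill it using the finiteness of $\Sha^1$.

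A minor point: the identification $\drl_{U'}N(U')\simeq\Sha^1(C')\{\ell\}$ does not require \cref{cor: Grothendieck--Serre for complexes over Hensel of DVR}. It follows formally from exactness of filtered colimits together with $\drl_{U'}\BBH^1(U',\CC')=\BBH^1(K,C')$; the paper just says ``by definition of $\BBD^1_{\sh}$''. The Grothendieck--Serre input was already consumed in \cref{result: key e-s degree 0} and need not reappear here.
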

\begin{proof}
We proceed by constructing a \ppair of finite groups $\Sha^1(C)\{\ell\}\times\Sha^1(C')\{\ell\}\to \QZ$ for a fixed prime $\ell$.
Define $\BBD^1_{\sh}(U,\CC)$ by the exact sequence
$0\to \BBD^1_{\sh}(U,\CC)\to \BBH^1(U,\CC)\to \prod_{v\in X^{(1)}}\BBH^1(K_v,C)$ for each $U\subset U_0$.
Dualizing the \ttes (\ref{sequence: l-adic completion of etale cohomology}) yields the following \cmdm (by a similar argument as \cite{CTH15}*{Proposition 4.3(f)}) with exact rows
\begin{equation*}
  \xm@R15pt{
  0\ar[r] & \BBD^1_{\sh}(U,\CC)\{\ell\}\ar[r]\ar@{-->}[d] & \BBH^1(U,\CC)\{\ell\}\ar[r]\ar[d] &
  \big(\prod\limits_{v\in X^{(1)}}\BBH^1(K_v,C)\big)\{\ell\}\ar[d]\\
  0\ar[r] & \big(\BBD_K^1(U,\CC')^{(\ell)}\big)^D\ar[r] & \big(\BBH^1_c(U,\CC')^{(\ell)}\big)^D\ar[r] & \big(\big(\bigoplus\limits_{v\in X^{(1)}}\BBH^0(K_v^h,C')\big)^{(\ell)}\big)^D.
  }
\end{equation*}
The first vertical arrow is induced by $\BBH^1(U,\CC)\{\ell\}\to\big(\BBH^1_c(U,\CC')^{(\ell)}\big)^D$ in view of
the commutativity of the right square.
By local duality \cref{duality: local Hensel degree 0 1}, the right vertical arrow is an isomorphism,
and it follows that the kernels of the first two vertical arrows are identified.
Passing to the direct limit of the dashed arrow induces an exact sequence of abelian groups
\[
0\to \drl_U\drl_n\BBK_n(U)\to \drl_U\BBD^1_{\sh}(U,\CC)\{\ell\}\to \drl_U\big(\BBD_K^1(U,\CC')^{(\ell)}\big)^D.
\]
Recall that $\drl\BBK_n(U)$ is the divisible kernel of the middle vertical arrow introduced in \cref{duality: AV complex of tori degree 1 1}.
Note that the second limit is just $\Sha^1(C)\{\ell\}$ by definition of $\BBD^1_{\sh}(U,\CC)$.
In particular, the first limit is trivial being a divisible subgroup of a finite abelian group.
Now we can conclude the following isomorphisms of finite abelian groups
\[
 \BBD_K^1(U,\CC')^{(\ell)}=\BBD_K^1(U,\CC')\{\ell\}^{(\ell)}
=\Sha^1(C')\{\ell\}^{(\ell)}\simeq\Sha^1(C')\{\ell\}
\]
where the central equality holds by \cref{result: D-1 and Sha-1 finite}.
Summing up, there is an injection $\Sha^1(C)\{\ell\}\to \Sha^1(C')\{\ell\}^D$ and therefore the desired isomorphism $\Sha^1(C)\{\ell\}\simeq \Sha^1(C')\{\ell\}^D$ follows by exchanging the role of $C$ and $C'$.
\end{proof}

\section{Obstruction to weak approximation via special covering}
Let $G$ be a connected reductive group.
Let $G^{\ss}$ be the derived subgroup of $G$ and let $\Gsc\to G^{\ss}$ be the universal cover of $G^{\ss}$.
We consider the composition $\rho:\Gsc\to G^{\ss}\to G$.
Let $T\subset G$ be a \maxtorus over $K$ and let $\Tsc =\rho\ui(T)$.
Recall that $\Tsc $ is a \maxtorus of $\Gsc$.
We apply the above dualities to the morphism $\rho:\Tsc \to T$, i.e. to the complexes $C=[\Tsc \to T]$ and $C'=[T'\to (\Tsc)']$ concentrated in degree $-1$ and $0$.
Recall $\Gsc$ satisfies $(*)$ if it has \wa and contains a quasi-trivial maximal torus.

\begin{prop}\label{result: quasi-split ss sc has WA over p-adic fct-field}
Let $H$ be a quasi-split semi-simple simply connected group over $K$.
Then $H$ satisfies \wa \wrt any finite set $S\subset X^{(1)}$ of places.
\end{prop}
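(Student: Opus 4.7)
The plan I would pursue is a direct appeal to Th\u{a}\'{n}g's theorem \cite{Tha96}*{Theorem 1.4}, which asserts weak approximation for every quasi-split semi-simple simply connected group over a function field of the type considered here. Since $K$ is the function field of a smooth projective geometrically integral curve over a $p$-adic field, it fits the hypotheses of that result; applied to $H$ it yields full weak approximation, and weak approximation with respect to any finite set $S\subset X^{(1)}$ then follows by projecting onto the corresponding partial product of $K_v$-points.

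For a more self-contained route, the plan is to exploit the Bruhat big cell. Since $H$ is quasi-split, fix a Borel subgroup $B=T\cdot U$ of $H$ defined over $K$ together with its opposite $B^{-}=T\cdot U^{-}$; then $\Omega\ce U^{-}\cdot T\cdot U$ is Zariski open and dense in $H$, and multiplication gives a $K$-isomorphism $U^{-}\times T\times U\isoto \Omega$. Each factor then satisfies weak approximation: in characteristic zero a connected unipotent group is $K$-isomorphic to affine space, while the maximal torus $T$ of a Borel in a simply connected quasi-split semi-simple group is quasi-trivial, hence of the form $\prod R_{L_i|K}\gm$, for which weak approximation follows from Shapiro's lemma and Hilbert's theorem $90$ applied over each $L_i$. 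A direct product inherits weak approximation from its factors, so $\Omega$ does.

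It then remains to promote weak approximation from $\Omega$ to all of $H$. For each $v\in S$ the set $\Omega(K_v)$ is $v$-adically open in $H(K_v)$, and smoothness of $H$ together with the implicit function theorem makes it $v$-adically dense in $H(K_v)$ as well. Given a target $(h_v)_{v\in S}\in\prod_{v\in S}H(K_v)$ and prescribed $v$-adic neighborhoods $N_v\ni h_v$, one picks $h_v'\in \Omega(K_v)\cap N_v$ and approximates the family $(h_v')_{v\in S}$ by some $h\in \Omega(K)\subset H(K)$ close enough to lie in every $N_v$. The main structural obstacle in this second plan is the quasi-triviality of $T$, which is the specifically simply connected piece of the input (and is precisely the part of condition $(*)$ that is not automatic from weak approximation alone); in the first plan the entire content is packaged into the cited theorem of Th\u{a}\'{n}g, so no further difficulty appears.
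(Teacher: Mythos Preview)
Your proposal is correct on both fronts. Your first plan---citing \cite{Tha96}*{Theorem 1.4} as a black box---is what the paper's introduction advertises, but the paper's formal proof takes a slightly more explicit route: it invokes \cite{Tha96}*{Corollary 1.5}, which gives a bijection between the defect of weak approximation for $H$ and that for a maximal torus $P$ sitting as a Levi subgroup of a Borel $B$. The paper then checks, via \cite{HSz16}*{Lemma 6.7}, that $\wh{P}\simeq\pic(\ol{H/B})$ is a permutation module, so $P$ is quasi-trivial and hence has weak approximation as an open subscheme of affine space. The content is the same---reduce to the torus---but the paper routes through Th\u{a}\'{n}g's defect-comparison rather than a single citation.

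Your second, big-cell argument is a genuinely different and more elementary route that bypasses Th\u{a}\'{n}g entirely, at the cost of needing (i) quasi-triviality of $T$ (which you correctly flag as the specifically simply-connected input; the paper supplies it via \cite{HSz16}*{Lemma 6.7}) and (ii) the $v$-adic density of $\Omega(K_v)$ in $H(K_v)$. The paper in fact records exactly this argument---as a remark attributed to Colliot-Th\'el\`ene at the end of the section on the reciprocity obstruction---phrased as the stable birationality of a quasi-split reductive group with its maximal torus. So your second plan rediscovers what the paper presents as an alternative viewpoint rather than its official proof.
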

\begin{proof}
Let $B$ be a Borel subgroup of $H$ defined over $K$ and
let $P$ be a maximal $K$-torus contained in $B$.
Applying \cite{HSz16}*{Lemma 6.7 and its proof} implies that $\wh{P}\simeq\wh{B}\simeq\pic(\ol{H/B})$ is a permutation module,
i.e. $P$ is a quasi-trivial torus.
Moreover, $P$ is a Levi subgroup of $B$ by \cite{BT65}*{Corollaire 3.14}.
Now \cite{Tha96}*{Corollary 1.5} yields a bijection
from the defect of \wa $\prod_{v\in S}H(K_v)/\ol{H(K)}_S$ for $H$
to that $\prod_{v\in S}P(K_v)/\ol{P(K)}_S$ for $P$ \wrt any finite set $S$ of places.
Here $\ol{H(K)}_S$ (resp. $\ol{P(K)}_S$) denotes the closure of $H(K)$ (resp. $P(K)$) in $\prod_{v\in S}H(K_v)$ (resp. $\prod_{v\in S}P(K_v)$) \wrt the product of $v$-adic topologies.
Since $P$ is a quasi-trivial torus,
by construction of Weil restriction it is an open subscheme of some affine space.
Hence $P$ satisfies \wa and so is $H$.
\end{proof}

\begin{prop}\label{result: quasi-split on red-ss-sc}
Let $H$ be a connected reductive group over $K$.
\TFAE $(1)$ $H$ is quasi-split; $(2)$ $\Hss$ is quasi-split; $(3)$ $\Hsc$ is quasi-split.
\end{prop}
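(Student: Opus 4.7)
The plan is to establish the two equivalences by constructing Galois-equivariant bijections between the sets of Borel subgroups of $H$, $\Hss$, and $\Hsc$, thereby reducing quasi-splitness of one group to that of the next.

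First I would handle $(1)\Leftrightarrow(2)$. Recall that the radical of a connected reductive group $H$ is the identity component $Z(H)^{\circ}$ of its centre, a $K$-torus, and that $H = \Hss \cdot Z(H)^{\circ}$ with finite central intersection. Since every Borel subgroup of $H$ contains the radical, the assignment $B\mapsto B\cap \Hss$ sends Borel subgroups of $H$ to Borel subgroups of $\Hss$, with inverse $B'\mapsto B'\cdot Z(H)^{\circ}$; this gives a bijection between the variety of Borel subgroups of $H$ and that of $\Hss$. Because $\Hss$ and $Z(H)^{\circ}$ are defined over $K$, the bijection is equivariant for the action of $\gal(\ol{K}|K)$, so one side has a $K$-rational Borel subgroup if and only if the other does.

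Next I would handle $(2)\Leftrightarrow(3)$. The simply connected cover $\pi:\Hsc\to \Hss$ is a central $K$-isogeny with finite central kernel. Under such an isogeny, preimages of parabolic subgroups are parabolic of the same type, and images of parabolic subgroups are parabolic; in particular, $B^{\sconn}\mapsto \pi(B^{\sconn})$ and $B^{\ss}\mapsto \pi^{-1}(B^{\ss})$ are mutually inverse bijections between the Borel subgroups of $\Hsc$ and those of $\Hss$. As $\pi$ is defined over $K$, these bijections are again Galois-equivariant, so one side is quasi-split if and only if the other is.

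The main point to verify carefully is that the two bijections are well-defined: that $B\cap \Hss$ is a Borel of $\Hss$, that $\pi^{-1}(B^{\ss})$ and $\pi(B^{\sconn})$ are Borels, and that the operations are mutually inverse. All of this is standard structure theory of reductive groups, as developed in \cite{SGA3III} and \cite{Milne17}, so once these facts are in hand the preservation of $K$-rationality is automatic from the fact that $\Hss$, $Z(H)^{\circ}$, and $\pi$ are all defined over $K$.
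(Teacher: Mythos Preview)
Your proposal is correct and follows essentially the same approach as the paper: both arguments rest on the standard bijections between Borel subgroups of $H$ and $\Hss$ (via $B\mapsto B\cap\Hss$, equivalently \cite{SGA3III}*{Proposition 6.2.8(ii)}) and between Borel subgroups of $\Hss$ and $\Hsc$ (via $\pi$ and $\pi^{-1}$, as in \cite{Milne17}*{Propositions 17.20, 17.68}). The only cosmetic difference is that you phrase the transfer of $K$-rationality via Galois-equivariance of the bijection on geometric Borels, whereas the paper works directly with $K$-subgroups and checks they are Borel after base change; these are equivalent, since a Galois-stable Borel over $\olK$ descends to $K$ by Galois descent for closed subgroups.
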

\pff
\bitem
\itm
By \cite{SGA3III}*{Proposition 6.2.8(ii)},
there is a one-to-one correspondence between Borel subgroups of $H$ and that of $\Hss$,
so $H$ is quasi-split \ttiff $\Hss$ is quasi-split.

\itm
Suppose $\Hsc$ is quasi-split.
Since the universal covering $q:\Hsc\to \Hss$ is faithfully flat, \cite{Milne17}*{Proposition 17.68} implies that $q$ sends Borel subgroups of $\Hsc$ to Borel subgroups of $\Hss$.
\inpart $\Hss$ is quasi-split.

\itm
Suppose $\Hss$ is quasi-split and let $\Bss$ be a Borel subgroup of $\Hss$.
Then $\big(q\ui(\Bss)\big)\times_K\olK$ is a Borel subgroup of $\Hsc\times_K\olK$ by \cite{Milne17}*{Proposition 17.20},
i.e. $\Hsc$ is quasi-split.
\qed
\eitem

The following corollary says that our technical assumption $(*)$ holds if $G$ is quasi-split.

\begin{cor}\label{result: quasi-split groups have *}
If $H$ is a quasi-split connected reductive group over $K$,
then $\Hsc$ satisfies $(*)$.
\end{cor}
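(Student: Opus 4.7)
The plan is to assemble the corollary directly from the two results proved immediately above, together with a quick observation already implicit in the proof of \cref{result: quasi-split ss sc has WA over p-adic fct-field}. First I would use \cref{result: quasi-split on red-ss-sc}, specifically the implication (1) $\Rightarrow$ (3), to deduce that $\Hsc$ is quasi-split. Since $\Hsc$ is also semi-simple and simply connected by construction, \cref{result: quasi-split ss sc has WA over p-adic fct-field} applies and gives weak approximation for $\Hsc$ with respect to any finite set of places.

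For the second half of condition $(*)$, namely the existence of a quasi-trivial maximal torus in $\Hsc$, I would recycle the permutation-module argument from the proof of \cref{result: quasi-split ss sc has WA over p-adic fct-field}. Concretely, pick a Borel subgroup $B^{\sconn}\subset \Hsc$ defined over $K$ (which exists by the previous paragraph) and let $P^{\sconn}\subset B^{\sconn}$ be a maximal $K$-torus; such a $P^{\sconn}$ exists because $B^{\sconn}$ is a connected solvable $K$-group and its unipotent radical has a Levi complement defined over $K$ by \cite{BT65}*{Corollaire 3.14}. By \cite{HSz16}*{Lemma 6.7 and its proof}, the character module $\widehat{P^{\sconn}}\simeq \widehat{B^{\sconn}}\simeq \pic(\overline{\Hsc/B^{\sconn}})$ is a permutation $\gal(\overline{K}|K)$-module, so $P^{\sconn}$ is a quasi-trivial maximal torus of $\Hsc$.

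There is really no obstacle here, since both ingredients have been dealt with separately; the only minor point worth stating explicitly is that the Borel subgroup produced by \cref{result: quasi-split on red-ss-sc} is the same one whose maximal torus is shown to be quasi-trivial, so the two conditions in $(*)$ are verified by the same choice of data. With this in place, the corollary follows immediately.
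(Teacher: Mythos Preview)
Your proof is correct and follows essentially the same approach as the paper: use \cref{result: quasi-split on red-ss-sc} to get $\Hsc$ quasi-split, then \cref{result: quasi-split ss sc has WA over p-adic fct-field} for weak approximation, and \cite{HSz16}*{Lemma 6.7} for the quasi-trivial maximal torus. The only difference is that the paper cites \cite{HSz16}*{Lemma 6.7} directly without unpacking the permutation-module argument, whereas you spell it out.
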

\begin{proof}
Indeed, $\Hsc$ is quasi-split by \cref{result: quasi-split on red-ss-sc} and
so it satisfies weak approximation by \cref{result: quasi-split ss sc has WA over p-adic fct-field}.
Moreover, $\Hsc$ contains a quasi-trivial maximal torus by \cite{HSz16}*{Lemma 6.7}.
\end{proof}

Suppose $\Gsc$ contains a quasi-trivial \maxtorus $\Tsc \subset \Gsc$.
Because $\Gsc\to G^{\ss}$ is an epimorphism,
the image $T^{\ss}$ of $\Tsc$ in $\Gss$ is again a maximal torus by \cite{Humphreys-alg-group}*{\S 21.3, Corollary C}.
Therefore we may choose a \maxtorus $T\subset G$ of $G$ such that $T\cap G^{\ss}=T^{\ss}$, i.e. $\Tsc =\rho\ui(T)$.
By \cite{Bor98}*{Section 2.4}, different choices of $[T^{\sconn}\to T]$ give rise to the same hypercohomology group and thus we are allowed to chose a quasi-trivial \maxtorus $\Tsc$.

\begin{thm}\label{result: OBS to WA qs-red groups}
Let $G$ be a connected reductive group such that $\Gsc$ satisfies $(*)$.
\benuma
\item Let $S\subset X^{(1)}$ be a finite set of places. There is an exact sequence
\be\label{sequence: OBS to WA red-group finite S}
  1\to \ol{G(K)}_S\to \tstprodlim_{v\in S}G(K_v)\to \Sha^1_S(C')^D\to \Sha^{1}(C)\to 1.
\ee
Here $\ol{G(K)}_S$ denotes the closure of the diagonal image of $G(K)$ in $\prod_{v\in S}G(K_v)$ for the product topology.

\item There is an exact sequence
\be\label{sequence: OBS to WA red-group all places}
  1\to \ol{G(K)}\to \tstprodlim_{v\in X^{(1)}}G(K_v)\to \Sha^1_{\og}(C')^D\to \Sha^{1}(C)\to 1.
\ee
Here $\ol{G(K)}$ denotes the closure of the diagonal image of $G(K)$ in $\prod_{v\in X^{(1)}}G(K_v)$ for the product topology.
\eenum
\end{thm}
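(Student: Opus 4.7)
\emph{Proof proposal.} The strategy is to reduce the statement for $G$ to the analogous (abelian) statement for the complex $C=[\Tsc\to T]$, combining the global duality $\Sha^1(C)\simeq\Sha^1(C')^D$ from \cref{result: global duality degree 1 and 1} with the abelianization map of Borovoi--Deligne and condition $(*)$.

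\emph{Step 1: a Poitou--Tate sequence for $C$.} First I would establish, for each finite $S\subset X^{(1)}$, an exact sequence of the form
\[
\BBH^0(K,C)\to \tstprodlim_{v\in S}\BBH^0(K_v,C)\to \Sha^1_S(C')^D\to \Sha^1(C)\to 1.
\]
The middle arrow is assembled from the local duality pairing $\BBH^0(K_v,C)\times\BBH^1(K_v,C')\to\QZ$ of \cref{duality: local cpx of tori degree 0 1}: since an element of $\Sha^1_S(C')$ is locally trivial off $S$, summing over $v$ gives a well-defined pairing with $\prod_{v\in S}\BBH^0(K_v,C)$. Exactness at $\Sha^1_S(C')^D\to\Sha^1(C)$ comes from dualizing the tautological sequence $0\to\Sha^1(C')\to\Sha^1_S(C')\to\bigoplus_{v\in S}\BBH^1(K_v,C')$ and inserting the global duality. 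Exactness in the middle is the delicate point and is to be obtained via a 9-term Poitou--Tate-style argument for $C$ over the $p$-adic function field $K$, paralleling \cite{Dem11} and \cite{HSSz15}; the finiteness of $\Sha^1(C)$ and $\Sha^1(C')$ from \cref{result: D-1 and Sha-1 finite} is essential for this to collapse to the stated four-term sequence.

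\emph{Step 2: abelianization and a two-row diagram.} The composite $\rho:\Tsc\to T$ makes $C$ the abelianization of $G$ in the sense of Borovoi, yielding a functorial map $\mathrm{ab}^0_L:G(L)\to\BBH^0(L,C)$ with $\Im(\Gsc(L)\to G(L))\subset\ker\mathrm{ab}^0_L$. Condition $(*)$, via the cited \cref{result: ab-0 surj for qs-group}, forces $\mathrm{ab}^0_{K_v}$ to be surjective for every $v$. I would then write down the \cmdm
\[
\xm@R=15pt{
G(K)\ar[r]\ar[d] & \prod_{v\in S}G(K_v)\ar[d]^-{\prod\mathrm{ab}^0_{K_v}}\ar[r] & \Sha^1_S(C')^D\ar@{=}[d]\ar[r] & \Sha^1(C)\ar@{=}[d]\\
\BBH^0(K,C)\ar[r] & \prod_{v\in S}\BBH^0(K_v,C)\ar[r] & \Sha^1_S(C')^D\ar[r] & \Sha^1(C)
}
\]
whose bottom row is the sequence of Step 1 and whose middle vertical arrow is surjective.

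\emph{Step 3: Sansuc-style chase and passage to all places.} The weak approximation part of $(*)$ says that $\Gsc(K)$ is dense in $\prod_{v\in S}\Gsc(K_v)$; this is exactly what is needed to show that any element of $\prod_{v\in S}G(K_v)$ whose abelianization lies in the closure of $\mathrm{ab}^0_K(G(K))$ already lies in $\ol{G(K)}_S$. Feeding this into the diagram together with surjectivity of $\mathrm{ab}^0_{K_v}$ and exactness of the bottom row identifies $\ol{G(K)}_S$ with the kernel of $\prod_{v\in S}G(K_v)\to\Sha^1_S(C')^D$, proving part (a); this chase is the content of the forthcoming \cref{lemma: main diagram chasing}. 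For part (b) I would take the inverse limit over finite $S\subset X^{(1)}$. Since each $\Sha^1_S(C')^D$ is finite, Mittag--Leffler is automatic; $\varprojlim_S\Sha^1_S(C')^D=\Sha^1_{\og}(C')^D$ because $\Sha^1_{\og}(C')=\bigcup_S\Sha^1_S(C')$; and $\ol{G(K)}=\prl_S\ol{G(K)}_S$ by definition of the product topology, so the limit of the sequence in (a) is exactly (b).

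\emph{Main obstacle.} The principal difficulty is Step 1: producing the Poitou--Tate four-term sequence at the level of the complex $C$. The local dualities available in the excerpt mix profinite completions, $\ell$-adic completions, and divisible kernels (see \cref{duality: local cpx of tori degree 0 1}, \cref{duality: local cpx of tori limits 0 1}, \cref{duality: local Hensel degree 0 1}), and the \ArtVer pairing of \cref{duality: AV complex of tori degree 1 1} only has divisible left kernel; reconciling these with the finite group $\Sha^1_S(C')^D$ and the middle surjectivity requires the finiteness of $\BBD^1_K(U,\CC)$ (and of $\Sha^1(C)$) together with a careful Mittag--Leffler argument. Once this sequence is in place, Steps 2 and 3 are mainly diagrammatic.
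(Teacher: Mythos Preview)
Your overall shape is right, but the route differs from the paper's in one essential respect: the paper never proves the four-term Poitou--Tate sequence for $C$ that you posit in Step~1. Instead, it first invokes Ono's lemma (\cref{lemma: to reduce main result to special covering case}) to replace $G$ by a group admitting a \emph{special covering} $1\to F_0\to G_0\to G\to 1$ with $G_0=\Gsc^{,m}\times Q$ quasi-trivial. The diagram chase in \cref{lemma: main diagram chasing} then uses the coboundary $G(K_v)\to H^1(K_v,F_0)$ together with the already-known Poitou--Tate exactness for the \emph{finite} module $F_0$ (the third column of diagram~(\ref{diagram: main diagram chasing}), quoted from \cite{HSSz15}*{Lemma 3.1}) in place of your abelian sequence for $C$. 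The last three terms are obtained, as you say, by dualizing $0\to\Sha^1(C')\to\Sha^1_S(C')\to\bigoplus_{v\in S}\BBH^1(K_v,C')$ and using \cref{duality: local cpx of tori degree 0 1} and \cref{result: global duality degree 1 and 1}; for part~(b) the paper does not rely purely on Mittag--Leffler but adds a compactness argument (the quotient $\prod_v G(K_v)/\ol{G(K)}$ is identified with a quotient of the profinite group $\prod_v H^1(K_v,F_0)$, hence compact, so its image in $\Sha^1_{\og}(C')^D$ is closed).

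Your approach is nonetheless viable, and in fact your ``main obstacle'' is lighter than you fear. Since $(*)$ lets you take $\Tsc$ quasi-trivial, one has $\BBH^0(L,C)=T(L)/\rho(\Tsc(L))$ for every field $L$ and $\Sha^1_S(C')\simeq\Sha^2_S(T')$; so your Step~1 sequence (with the closure $\ol{\BBH^0(K,C)}_S$ in the first slot, which is what you actually need and use in Step~3) is an immediate consequence of the torus sequence of \cite{HSSz15} for $T$ together with weak approximation for the quasi-trivial torus $\Tsc$. No new nine-term machinery is required. What the special-covering route buys the paper is that it avoids checking the topological properties of $\ab^0_{K_v}$ (openness, identification of its kernel with $\rho(\Gsc(K_v))$) needed in your Sansuc lift; conversely, your route is more intrinsic and does not pass through $G^m\times Q$. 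One small correction: in Step~1 the sequence cannot be exact as written with $\BBH^0(K,C)$ itself---you must take its closure in $\prod_{v\in S}\BBH^0(K_v,C)$, as you implicitly do later.
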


\begin{eg}
Let us first look at two special cases of the sequence (\ref{sequence: OBS to WA red-group finite S}).\hfill
\benuma
\item
If $G$ is semi-simple, then there is an exact sequence $1\to F\to \Gsc\to G\to 1$ with $F$ finite and central in $\Gsc$. In particular, there are exact sequences
\[1\to F\to \Tsc \to T\to 1\qand 1\to F'\to T'\to (\Tsc)'\to 1.\]
Here $F'=\hom(F,\QZ(2))$ and the latter sequence is obtained from the dual isogeny of $\Tsc \to T$.
Consequently there are quasi-isomorphisms $C\simeq F[1]$ and $C'\simeq F'[1]$, and hence $\Sha^1_S(C)\simeq\Sha^2_S(F)$ and $\Sha^1(C')\simeq\Sha^2(F')$.
Therefore the exact sequence (\ref{sequence: OBS to WA red-group finite S}) reads as
\[1\to \ol{G(K)}_S\to \tstprodlim_{v\in S}G(K_v)\to \Sha^2_S(F')^D\to \Sha^{2}(F)\to 1.\]
Here the second arrow is given by the composite of the coboundary map $G(K_v)\to H^1(K_v,F)$ and the local duality $H^1(K_v,F)\times H^2(K_v,F')\to \QZ$,
and the last one is given by the global duality $\Sha^2(F)\times \Sha^2(F')\to \QZ$ for finite Galois modules
(see \cite{HSz16}*{(10) and Theorem 4.4} for details).

\item
If $G=T$ is a torus, then $C=[\Tsc \to T]$ is quasi-isomorphic to the complex $[0\to T]\simeq T$ and
its dual $C'=[T'\to (\Tsc)']$ is quasi-isomorphic to the complex $[T'\to 0]\simeq T'[1]$.
So we have $\Sha^1(C)\simeq\Sha^1(T)$ and $\Sha^1_S(C')\simeq\Sha^2_S(T')$.
Now the exact sequence (\ref{sequence: OBS to WA red-group finite S}) is of the following form
\[1\to \ol{T(K)}_S\to \tstprodlim_{v\in S}T(K_v)\to \Sha^2_S(T')^D\to \Sha^{1}(T)\to 1.\]
This is the obstruction to weak approximation for tori given by Harari, Scheiderer and Szamuely in \cite{HSSz15}.
\eenum
\end{eg}

The rest of this section is devoted to the proof of \cref{result: OBS to WA qs-red groups}.
For a field $L$ and $i=0,1$, we shall denote by $\ab^i:H^i(L,G)\to \BBH^i(L,C)$ the abelianization map in the sequel.
The construction of the abelianization map is set forth in \cite{Bor98}*{Section 3}.

\begin{lem}\label{result: ab-0 surj for qs-group}
Let $L$ be a field of characteristic zero and let $G$ be a connected reductive group over $L$ such that $\Gsc$ satisfies $(*)$.
Then the canonical map $\ab^0:H^0(L,G)\to \BBH^0(L,C)$ is surjective.
\end{lem}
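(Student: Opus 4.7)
The plan is to exploit the quasi-trivial maximal torus part of condition $(*)$ and reduce the surjectivity of $\ab^0$ to the surjectivity of the natural connecting map $T(L)\to \BBH^0(L,C)$, using only functoriality of Borovoi's abelianization construction.

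First, I would use the hypothesis that $\Gsc$ contains a quasi-trivial maximal torus. By the discussion immediately preceding \cref{result: OBS to WA qs-red groups} (which says that different choices of $[\Tsc\to T]$ give rise to the same hypercohomology group, so we may freely replace one maximal torus pair by another), I may assume that $\Tsc \subset \Gsc$ is a quasi-trivial maximal torus and that $T\subset G$ is chosen so that $\rho\ui(T)=\Tsc$. Then $\Tsc\simeq \prod R_{L_i|L}\gm$ for some finite field extensions $L_i|L$, and by Shapiro's lemma combined with Hilbert's theorem $90$ one has $H^1(L,\Tsc)=0$.

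Second, the distinguished triangle $\Tsc\stra{\rho} T\to C\to \Tsc[1]$ gives the long exact sequence
\[
H^0(L,\Tsc)\to H^0(L,T)\to \BBH^0(L,C)\to H^1(L,\Tsc)=0,
\]
so the natural connecting map $\pi:T(L)\to \BBH^0(L,C)$ is surjective. It remains to identify this map with the restriction of $\ab^0$ to $T(L)$.

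Third, by Borovoi's construction of the abelianization map (see \cite{Bor98}*{Sections 3.10--3.13}), for any maximal torus $T\subset G$ with preimage $\Tsc\subset\Gsc$, the following diagram is commutative:
\[
\xymatrix@R=15pt{
T(L)\ar[r]^-{\pi}\ar@{^(->}[d] & \BBH^0(L,C)\ar@{=}[d]\\
G(L)\ar[r]^-{\ab^0} & \BBH^0(L,C).
}
\]
Combining the commutativity of this diagram with the surjectivity of $\pi$ established above yields the surjectivity of $\ab^0$.

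The only delicate point is the invocation of Borovoi's functoriality in step three — once this compatibility is recorded, the rest is immediate from the vanishing $H^1(L,\Tsc)=0$. Note that the weak approximation part of condition $(*)$ plays no role in this lemma (indeed, it would not even make sense over an arbitrary field of characteristic zero); only the existence of a quasi-trivial maximal torus in $\Gsc$ is used.
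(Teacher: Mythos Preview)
Your proof is correct and takes a slightly different route from the paper's. Both arguments hinge on the vanishing $H^1(L,\Tsc)=0$ for a quasi-trivial $\Tsc$, but they use it at opposite ends of the problem. The paper works on the cokernel side: it invokes Borovoi's exact sequence of pointed sets
\[
G(L)\longrightarrow \BBH^0_{\rel}(L,[\Gsc\to G])\longrightarrow H^1(L,\Gsc)
\]
and shows, via the commutative square of crossed modules $[\Tsc\to T]\to[\Gsc\to G]$ and $[\Tsc\to 1]\to[\Gsc\to 1]$, that the second map factors through $H^1(L,\Tsc)=0$ and is therefore trivial, forcing $\ab^0$ to be surjective. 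You instead work on the image side: you note directly from the abelian distinguished triangle that $T(L)\to\BBH^0(L,C)$ is surjective, and then argue by functoriality of the abelianization map that this surjection factors through $G(L)\xrightarrow{\ab^0}\BBH^0(L,C)$. Your approach is a touch more elementary in that it never explicitly names the nonabelian obstruction in $H^1(L,\Gsc)$, staying entirely within abelian hypercohomology plus one compatibility; the paper's approach makes that obstruction visible, which is conceptually informative. Your closing remark that only the quasi-trivial maximal torus part of $(*)$ is used here is also correct and worth noting.
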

\begin{proof}
Let $A\to B$ be a crossed module (see \cite{Bor98}*{Section 3}) of (not necessarily abelian) $\gal(\ol{L}|L)$-groups concentrated in degree $-1$ and $0$.
We write $\BBH_{\rel}^i(L,[A\to B])$ for the non-abelian \hchlg
in the sense of \cite{Bor98}*{Section 3} for $-1\le i\le 1$.
Now we view $\rho:\Tsc \to T$ and $\rho:\Gsc\to G$ as crossed modules of $\gal(\ol{L}|L)$-groups concentrated in degree $-1$ and $0$.
Recall $\Tsc $ is chosen to be quasi-trivial.
In particular, we have a \cmdm of crossed modules of $\gal(\ol{L}|L)$-groups
\begin{equation*}
  \xymatrix{
  [\Tsc \to T]\ar[r]\ar[d] & [\Tsc \to 1]\ar[d]\\
  [\Gsc\to G]\ar[r] & [\Gsc\to 1].
  }
\end{equation*}
Applying the functor $\BBH^0_{\rel}(\gal(\ol{L}|L),-)$ with values in the category of pointed sets and taking into account the identification $\BBH^0_{\rel}(\gal(\ol{L}|L),[A\to 1])\simeq H^1(\gal(\ol{L}|L),A)$ (see \cite{Bor98}*{Example 3.1.2(2)}), we obtain the following \cmdm of pointed sets
\begin{equation*}
  \xymatrix{
  \BBH^0_{\rel}(L,[\Tsc \to T])\ar[r]\ar[d] & H^1(L,\Tsc )=0\ar[d]\\
  \BBH^0_{\rel}(L,[\Gsc\to G])\ar[r] & H^1(L,\Gsc).
  }
\end{equation*}
By \cite{Bor98}*{Lemma 3.8.1}, there are isomorphisms of pointed sets
\[
\BBH^0(L,C)\simeq \BBH^0_{\rel}(L,[\Tsc \to T])\simeq \BBH^0_{\rel}(L,[\Gsc\to G])
\]
Since $\Tsc $ is quasi-trivial, we conclude that $\BBH^0(L,C)\to H^1(L,\Gsc)$ is the trivial map, that is, the canonical map $\ab^0:G(L)\to \BBH^0(L,C)$ is surjective (see \cite{Bor98}*{3.10}).
\end{proof}

We now proceed as in \cite{San81}. The first step is to show the following:

\begin{lem}\label{lemma: to reduce main result to special covering case}
Let $m\ge 1$ be an integer and let $Q$ be a quasi-trivial $K$-torus.
If the sequence \upshape{(\ref{sequence: OBS to WA red-group finite S})} is exact for $G^m\times_KQ$, then it is also exact for $G$.
\end{lem}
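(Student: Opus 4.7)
The strategy is a retract argument: under the natural inclusion $G\hookrightarrow G^m\times_K Q$ into the first factor and the first projection $G^m\times_K Q\twoheadrightarrow G$, the group $G$ is a retract of $H\ce G^m\times_K Q$. I would transfer exactness of the four-term sequence from $H$ to $G$ by exhibiting the sequence for $G$ as a termwise direct summand of the (exact) sequence for $H$.

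I would first set up compatible maximal tori. Since $\Gsc$ satisfies $(*)$ it contains a quasi-trivial maximal torus $\Tsc$, and then $H^{\sconn}=(\Gsc)^m$ has the quasi-trivial maximal torus $(\Tsc)^m$; taking $T_H\ce T^m\times Q$ as the associated maximal torus of $H$, the complex attached to $H$ is
\[
C_H=[(\Tsc)^m\to T^m\times Q]\simeq C^m\oplus [0\to Q]
\]
in degrees $-1$ and $0$, with dual $C_H'\simeq (C')^m\oplus Q'[1]$. Next I would check the decomposition at each of the four terms of (\ref{sequence: OBS to WA red-group finite S}): the factorizations $H(K_v)=G(K_v)^m\times Q(K_v)$ and $H(K)=G(K)^m\times Q(K)$ give $\prod_{v\in S}H(K_v)=(\prod_{v\in S}G(K_v))^m\times\prod_{v\in S}Q(K_v)$ and, because closures commute with finite products for the product topology, $\ol{H(K)}_S=\ol{G(K)}_S^m\times \ol{Q(K)}_S$. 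On the cohomological side, the splitting of $C_H$ gives $\Sha^1(C_H)\simeq \Sha^1(C)^m\oplus \Sha^1(Q)$, and $\Sha^1(Q)\subseteq H^1(K,Q)=0$ by Shapiro and Hilbert~90 since $Q$ is quasi-trivial; dually, $\Sha^1_S(C_H')^D\simeq (\Sha^1_S(C')^D)^m\oplus \Sha^2_S(Q')^D$. In each position, the $G$-component is canonically a retract of the $H$-component via the inclusion/projection between $G$ and $H$.

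I would then invoke functoriality. The maps in (\ref{sequence: OBS to WA red-group finite S}) are built from the abelianization map $\ab^0\colon G(K_v)\to\BBH^0(K_v,C)$ of \cite{Bor98} and from the local and global duality pairings of Section~1, all of which are functorial in $C$ (hence in $G$). This functoriality shows that inclusion and projection $G\rightleftarrows H$ induce maps of four-term sequences whose compositions are the identity at every term, so the sequence for $G$ is a termwise retract of the sequence for $H$. A short diagram chase (if $A\rightleftarrows A'$, $B\rightleftarrows B'$, \ldots are compatible retracts with the middle sequence exact, then so is the outer one) then yields exactness of (\ref{sequence: OBS to WA red-group finite S}) for $G$.

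The main technical obstacle is verifying compatibility of the obstruction map $\prod_{v\in S}G(K_v)\to \Sha^1_S(C')^D$ with the decomposition: one must check that the abelianization and the local duality pairing actually respect the splitting $C_H\simeq C^m\oplus [0\to Q]$, and that the contribution of the $Q$-factor is absorbed consistently (this is where weak approximation for the rational variety $Q$ and the vanishing of $H^1(K,Q)$ are used). Once those compatibilities are in place, the retract principle gives the conclusion with no further computation.
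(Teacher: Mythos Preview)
Your retract argument is correct and gives a cleaner, more uniform treatment than the paper's. The paper proceeds in two separate steps: first it notes that exactness for $G^m$ trivially implies exactness for $G$, and then for $G\times_K Q$ it argues that the four terms of the sequence for $G\times_K Q$ are \emph{isomorphic} to those for $G$ (not merely retracts). Concretely, with $C_Q=[\Tsc\to T\times_K Q]$ the paper uses $H^1(K,Q)=0$ and \cite{HSSz15}*{Lemma~3.2(a)} to show $\Sha^1(C_Q)\simeq\Sha^1(C)$ and $\Sha^1_S(C_Q')^D\simeq\Sha^1_S(C')^D$, while weak approximation for the rational variety $Q$ gives $\ol{(G\times Q)(K)}_S=\ol{G(K)}_S\times\prod_{v\in S}Q(K_v)$, so the defect is unchanged.

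Your approach avoids proving these vanishing statements: once $C_H\simeq C^m\oplus[0\to Q]$ is set up and the functoriality of $\ab^0$ and of the duality pairings is checked, the retract principle transfers exactness directly, regardless of whether $\Sha^2_S(Q')$ vanishes. This is more robust and handles $G^m$ and $Q$ simultaneously. The price, as you note, is the compatibility check for $\ab^0$ with respect to products and projections---this is routine from Borovoi's construction \cite{Bor98}*{\S3}, but does need to be said. The paper's approach trades that check for the explicit vanishing lemmas, which it already has available.
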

\begin{proof}
If the sequence (\ref{sequence: OBS to WA red-group finite S}) is exact for some finite direct product $G^m$, then  (\ref{sequence: OBS to WA red-group finite S}) is exact for $G$ as well.
We claim if (\ref{sequence: OBS to WA red-group finite S}) is exact for the product $G\times_KQ$ of $G$ by some quasi-trivial $K$-torus $Q$, then (\ref{sequence: OBS to WA red-group finite S}) is also exact for $G$.
Since $T\subset G$ is a \maxtorus of $G$, $T\times_KQ$ is a \maxtorus of $G\times_KQ$.
Moreover, $\s{D}(G\times_KQ)=G^{\ss}$, so we have a composite $\rho_Q:\Gsc\to G^{\ss}\to G\times_KQ$.
We introduce the complex $C_Q=[\Tsc \to T\times_KQ]$ which is concentrated in degree $-1$ and $0$.
Consider the following \cmdm
\begin{equation*}
  \xymatrix{
  H^1(K,\Tsc )\ar[r]\ar@{=}[d] & H^1(K,T\times_KQ)\ar[r]\ar[d]_{\simeq} & \BBH^1(K,C_Q)\ar[r]\ar[d] & H^2(K,\Tsc )\ar[r]\ar@{=}[d] & H^2(K,T\times_KQ)\ar[d]\\
  H^1(K,\Tsc )\ar[r] & H^1(K,T)\ar[r] & \BBH^1(K,C)\ar[r] & H^2(K,\Tsc )\ar[r] & H^2(K,T)
  }
\end{equation*}
where the second vertical map is an isomorphism since $Q$ is a quasi-trivial $K$-torus.
Applying \cite{HSSz15}*{Lemma 3.2(a)} yields isomorphisms $\Sha^1(C_Q)\simeq\Sha^1(T) \simeq \Sha^1(C)$.
Similarly, $\Sha^1_S(C_Q')^D\simeq \Sha^1_S(C')^D$ holds for any finite subset $S$ of places.
Finally, recall that quasi-trivial $K$-tori are $K$-rational, hence in particular $Q$ satisfies weak approximation.
It follows that the cokernel of the first map in (\ref{sequence: OBS to WA red-group finite S}) is stable under multiplying $G$ by a quasi-trivial torus and hence the exactness of (\ref{sequence: OBS to WA red-group finite S}) for $G\times_KQ$ yields the exactness of (\ref{sequence: OBS to WA red-group finite S}) for $G$.
Summing up, to prove the exactness of (\ref{sequence: OBS to WA red-group finite S}), we are free to replace $G$ by $G^m\times_KQ$ for some integer $m$ and some quasi-trivial $K$-torus $Q$.
\end{proof}

Suppose $\Gsc$ satisfies $(*)$.
By \cite{BT65}*{Proposition 2.2} and Ono's lemma \cite{San81}*{Lemme 1.7}, there exist an integer $m\ge 1$, quasi-trivial $K$-tori $Q$ and $Q_0$ such that $G^{\sconn,m}\times_KQ\to G^m\times_K Q_0$ is a central $K$-isogeny.
By \cref{lemma: to reduce main result to special covering case},
we may therefore assume that $G$ has a special covering $1\to F_0\to G_0\to G\to 1$ where $G_0$
satisfies \wabk and
has derived subgroup $\s{D}G_0=\Gsc$.
Moreover, $G_0$ contains a quasi-trivial \maxtorus $T_0$ over $K$
such that $T_0\cap \Gsc=\Tsc$ and that the sequence $1\to F_0\to T_0\to T\to 1$ is exact by construction.
Therefore we may assume $G$ admits a special covering in the sequel.

\vspace{1em}
The second step is to show the exactness at the first three terms:
\begin{lem}\label{lemma: main diagram chasing}
There is an exact sequence $1\to \ol{G(K)}_S\to \prod_{v\in S}G(K_v)\to \Sha^1_S(C')^D$.
\end{lem}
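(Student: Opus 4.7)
The arrow $\prod_{v\in S} G(K_v) \to \Sha^1_S(C')^D$ is defined by sending a family $(g_v)$ to the homomorphism
\[\xi \longmapsto \sum_{v \in S} \langle \ab^0_v(g_v),\, \res_v(\xi)\rangle_v,\]
where $\ab^0_v \colon G(K_v) \to \BBH^0(K_v, C)$ is the local abelianization (surjective by \cref{result: ab-0 surj for qs-group}) and $\langle -,- \rangle_v$ is the local duality pairing obtained by applying \cref{duality: local cpx of tori degree 0 1} to the dual pair $(C', C)$. By definition of $\Sha^1_S(C')$, $\res_v(\xi) = 0$ for $v \notin S$, so only finitely many summands contribute. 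The assignment is a group homomorphism, continuous for the product of $v$-adic topologies on the left and the discrete topology on the right, since each $\ab^0_v$ is continuous with open kernel containing the image of $\Gsc(K_v) \to G(K_v)$.

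\textbf{Annihilation of $G(K)$.} For $g \in G(K)$ and $\xi \in \BBH^1(K, C')$ the global cup product along $C \dtp C' \to \Z(2)[3]$ yields a class in $H^3(K, \QZ(2))$, whose sum of local invariants vanishes by reciprocity over the $p$-adic function field $K$ (as used repeatedly in \cite{HSz16} and \cite{HSSz15}). When $\xi \in \Sha^1_S(C')$, only $v \in S$ contributes, and we obtain the vanishing of our pairing on the diagonal image of $G(K)$. Continuity then forces $\overline{G(K)}_S$ into the kernel as well, giving the easy inclusion.

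\textbf{Reverse inclusion.} Conversely, suppose $(g_v) \in \prod_{v\in S} G(K_v)$ is annihilated by every $\xi \in \Sha^1_S(C')$, and set $\alpha_v = \ab^0_v(g_v) \in \BBH^0(K_v, C)$. The hypothesis states that $(\alpha_v)$ annihilates $\Sha^1_S(C')$ under local duality. A Poitou--Tate style approximation sequence for $C$, of the shape
\[\overline{\BBH^0(K, C)}_S \longrightarrow \tstprodlim_{v \in S} \BBH^0(K_v, C) \longrightarrow \Sha^1_S(C')^D,\]
which I would extract from \cref{result: global duality degree 1 and 1} together with the local duality \cref{duality: local cpx of tori degree 0 1} and the localization sequences of \cref{result: l-e-s cpt-supp and 3 arrows lemma} (mimicking the tori case in \cite{HSSz15}*{\S4}), identifies this annihilator with the closure of the diagonal image of $\BBH^0(K, C)$. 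Approximating $(\alpha_v)$ by $\alpha \in \BBH^0(K, C)$ and lifting $\alpha$ to $g \in G(K)$ via \cref{result: ab-0 surj for qs-group}, we may replace $g_v$ by $g^{-1} g_v$ and assume each $\alpha_v$ is arbitrarily close to the identity. Having reduced, via \cref{lemma: to reduce main result to special covering case} and the discussion that follows, to a special covering $1 \to F_0 \to G_0 \to G \to 1$ with $G_0$ a product of $\Gsc$ by a quasi-trivial torus (in particular $G_0$ satisfies weak approximation), the smallness of $\alpha_v$ means $g_v$ lies, up to arbitrarily small factors in $G(K_v)$, in the image of $G_0(K_v) \to G(K_v)$. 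Lifting to $(h_v) \in \prod_{v\in S} G_0(K_v)$ and invoking weak approximation for $G_0$ produces $h \in G_0(K)$ whose image in $G(K)$ approximates the modified family; hence $(g_v) \in \overline{G(K)}_S$.

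\textbf{Main obstacle.} The decisive input is the Poitou--Tate approximation statement for the complex $C$ used in the reverse inclusion. Although the perfect pairing of \cref{result: global duality degree 1 and 1} controls $\Sha^1(C)$ by $\Sha^1(C')$, turning this into an $S$-local approximation statement for the diagonal image of $\BBH^0(K, C)$ in $\prod_{v\in S} \BBH^0(K_v, C)$ requires a careful diagram chase through the compact-support long exact sequences of \cref{result: l-e-s cpt-supp and 3 arrows lemma} and the identification $\BBD^1_K(U, \CC) = \Sha^1(C)$ of \cref{result: D-1 and Sha-1 finite}, following the pattern of \cite{HSSz15} for tori. A subsidiary delicate point is the openness of $\ab^0_v$, needed to pass from smallness in $\BBH^0(K_v, C)$ back to smallness in $G(K_v)$ modulo the image of $G_0(K_v)$; this rests on the smoothness of the $G_0$-torsor encoded by the abelianization sequence.
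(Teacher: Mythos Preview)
Your outline is plausible but follows a different route from the paper and leaves its decisive step unproved. The paper does not attempt to establish a Poitou--Tate approximation statement for $\BBH^0(K,C)$; instead it works entirely through the special covering $1\to F_0\to G_0\to G\to 1$ and reduces to the finite module $F_0$. Concretely, the paper sets up the commutative diagram (\ref{diagram: main diagram chasing}) with rows coming from the covering, identifies $\Sha^1_S(C')\simeq\Sha^2_S(F_0')$ (via $1\to F_0\to T_0\to T\to 1$ and the quasi-triviality of $T_0$ and $(\Tsc)'$), observes that the coboundary maps $\partial:G(K)\to H^1(K,F_0)$ and $\partial_v:G(K_v)\to H^1(K_v,F_0)$ are \emph{surjective} (because $F_0\subset T_0$ forces $H^1(K,F_0)\to H^1(K,G_0)$ to factor through $H^1(K,T_0)=0$), and then invokes the ready-made exact column $H^1(K,F_0)\to\prod_{v\in S}H^1(K_v,F_0)\to\Sha^2_S(F_0')^D$ from \cite{HSSz15}*{Lemma 3.1}. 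Weak approximation for $G_0$ plus a direct diagram chase then yields the exactness.

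Your approach, by contrast, needs an exact sequence $\overline{\BBH^0(K,C)}_S\to\prod_{v\in S}\BBH^0(K_v,C)\to\Sha^1_S(C')^D$ at the abelianized level. You correctly flag this as the main obstacle, and indeed it is not proved anywhere in the paper: the compact-support machinery of \cref{result: l-e-s cpt-supp and 3 arrows lemma} and \cref{result: D-1 and Sha-1 finite} yields the degree-$1$ duality $\Sha^1(C)\simeq\Sha^1(C')^D$, but not directly an $S$-approximation statement in degree $0$. When $\Tsc$ is quasi-trivial this statement is essentially the tori case of \cite{HSSz15} for $T$ (since then $\Sha^1_S(C')\simeq\Sha^2_S(T')$ and $\BBH^0(K_v,C)$ is a quotient of $T(K_v)$), so your route is not circular, but it amounts to re-deriving that result and then lifting back to $G$. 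There is also the subsidiary issue you mention: the kernel of $\ab^0_v$ is the image of $\Gsc(K_v)$, not of $G_0(K_v)$, so passing from smallness of $\alpha_v$ to lying in the image of $G_0(K_v)$ requires an extra argument (for instance using that $G_0/\Gsc$ is a quasi-trivial torus). The paper's route through $F_0$ sidesteps both points by making the relevant cohomology finite and the coboundary surjective, so the whole lemma becomes a one-paragraph diagram chase.
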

\proof
After passing to the dual isogeny of the exact sequence $1\to F_0\to T_0\to T\to 1$, we obtain $\Sha^2_S(F_0')\simeq\Sha^2_S(T')$ since $T_0'$ is quasi-trivial and $\Sha^2_{\og}(T_0')=0$ by \cite{HSSz15}*{Lemma 3.2(a)}.
Moreover, the \distri $T'\to (\Tsc)'\to C'\to T'[1]$ induces an isomorphism $\Sha^1_S(C')\simeq \Sha^2_S(T')$ for the same reason.
In particular, we obtain an isomorphism $\Sha^2_S(F_0')\simeq \Sha^1_S(C')$ which fits into the \cmdm with $F_0'=\hom(F_0,\QZ(2))$
\beac\label{diagram: main diagram chasing}
  \xm{
  G_0(K)\ar[r]\ar[d] & G(K)\ar[r]^-{\dif}\ar[d] & H^1(K,F_0)\ar[r]\ar[d] & H^1(K,G_0)\ar[d] \\
  \prod_{v\in S}G_0(K_v)\ar[r] & \prod_{v\in S}G(K_v)\ar[r]^-{\dif_v}\ar[d] & \prod_{v\in S}H^1(K_v,F_0)\ar[r]\ar[d] & \prod_{v\in S}H^1(K_v,G_0) \\
  & \Sha^1_S(C')^D\ar[r] & \Sha^2_S(F_0')^D
  }
\eeac
Recall the third column is exact by \cite{HSSz15}*{Lemma 3.1}.
We claim the coboundary map $\dif$ is surjective.
Since $F_0$ is finite and central in $G_0$, $F_0$ is contained in one, hence every, \maxtorus of $G_0$.
In particular, $F_0$ is contained in the quasi-trivial \maxtorus $T_0$ of $G_0$ and consequently the map $H^1(K,F_0)\to H^1(K,G_0)$ factors through $H^1(K,F_0)\to H^1(K,T_0)$.
By construction $T_0$ is quasi-trivial, so the vanishing $H^1(K,T_0)=0$ implies that $H^1(K,F_0)\to H^1(K,G_0)$ is the trivial map, i.e. $\dif$ is surjective.
Similarly, the coboundary map $\dif_v:G(K_v)\to H^1(K_v,F_0)$ is surjective for each place $v\in X^{(1)}$.
Since $G_0$ satisfies weak approximation by assumption, $G_0(K)$ has dense image in $\prod_{v\in S}G_0(K_v)$.
A diagram chasing now yields the desired exact sequence.\qed

\vspace{1em}
In order to prove \cref{result: OBS to WA qs-red groups}(1),
the last step is to show the exactness of the last three terms.
By definition there is an exact sequence
$$1\to \Sha^1(C')\to \Sha^1_{S}(C')\to \bigoplus_{v\in S}\BBH^1(K_v,C')$$
of discrete abelian groups.
Dualizing the sequence, we obtain an exact sequence of profinite groups:
\[
\tstprodlim_{v\in S}\BBH^{0}(K_v,C)^{\wedge}\to \Sha^{1}_S(C')^D\to \Sha^{1}(C)\to 1
\]
by \cref{duality: local cpx of tori degree 0 1} and \cref{result: global duality degree 1 and 1}.
Since $\Sha^1_S(C')\simeq \Sha^1_S(T')$ is a finite group, the groups $\prod_{v\in S}\BBH^0(K_v,C)$ and $\prod_{v\in S}\BBH^{0}(K_v,C)^{\wedge}$ have the same image in $\Sha^1_S(C')^D$.
By \cref{result: ab-0 surj for qs-group},
the canonical abelianization map $\prod_{v\in S}G(K_v)\to \prod_{v\in S}\BBH^0(K_v,C)$ is surjective
which guarantees the desired exactness.

\begin{proof}[Proof of \upshape{\cref{result: OBS to WA qs-red groups}(2)}]
Passing to the projective limit of (\ref{sequence: OBS to WA red-group finite S}) over all finite subset $S\subset X^{(1)}$ yields an exact sequence of groups
$$1\to \ol{G(K)}\to \tstprodlim_{v\in X^{(1)}}G(K_v)\to \Sha^1_{\og}(C')^D.$$
Dualizing the exact sequence of discrete groups
$$1\to \Sha^1(C')\to \Sha^1_{\og}(C')\to \tstopslim_{v\in X^{(1)}}\BBH^1(K_v,C')$$
yields an exact sequence of profinite groups
$$\tstprodlim_{v\in X^{(1)}}\BBH^0(K_v,C)^{\wedge}\to \Sha^1_{\og}(C')^D\to \Sha^1(C)\to 0.$$
Since $G(K_v)\to \BBH^0(K_v,C)$ is surjective
and $\prod_{v\in X\uun}\BBH^0(K_v,C)$ is dense in $\prod_{v\in X\uun}\BBH^0(K_v,C)^{\wedge}$,
it will be sufficient to show the image of $\prod_{v\in X^{(1)}}G(K_v)$ is closed in $\Sha^1_{\og}(C')^D$.
In view of diagram (\ref{diagram: main diagram chasing}), the quotient of $\prod_{v\in X^{(1)}}G(K_v)$ by $\ol{G(K)}$ is isomorphic to the quotient of the profinite group $\prod_{v\in X^{(1)}}H^1(K_v,F_0)$ by the closure of the image of $H^1(K,F_0)$.
Consequently, the quotient of $\prod_{v\in X^{(1)}}G(K_v)$ by $\ol{G(K)}$ is compact and hence the image of $\prod_{v\in X^{(1)}}G(K_v)$ in $\Sha^1_{\og}(C')^D$ is closed, as required.
\end{proof}

Actually the defect of weak approximation for $G$ can also be given by a simpler group $\Sha^2_{\og}(G^*)$ where $G^*$ is the group of multiplicative type whose character module is $\wh{G^*}=\pi_1^{\alg}(\ol{G})$.
Here $\pi_1^{\alg}$ (see \cite{Bor98}*{\S1} or \cite{CT08-resolution-flasque}*{\S6} for more details) denotes the algebraic fundamental group of a connected reductive group $G$
defined as $\pi_1^{\alg}(\ol{G})\ce\wc{T}/\rho_*\wc{\Tsc}$
where $\rho_*:\wc{\Tsc}\to \wc{T}$ is induced by $\rho:\Gsc\to G$.

\begin{prop}\label{remark: another defect of WA for reductive groups}
Let $G$ be a connected reductive group such that $\Gsc$ satisfies $(*)$.
Let $G^*$ be the group of multiplicative type whose character module is $\pi_1^{\alg}(\ol{G})$.
There is a surjective map of groups $\Sha^1_{\og}(G^*)\to \Sha^1_{\og}(C')$.
\end{prop}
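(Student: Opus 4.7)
My plan is to construct the desired map as a composite of two natural pieces. The underlying identification comes from a quasi-isomorphism $C'\simeq G^*[1]$: since $\rho|_{\Tsc}:\Tsc\to T\cap G^{\ss}$ is an isogeny and $T\cap G^{\ss}\hookrightarrow T$ is a subtorus inclusion, the cocharacter map $\rho_*:\wc{\Tsc}\to\wc{T}$ is injective, so its Cartier dual $T'\to(\Tsc)'$ is surjective with kernel $G^*$. This yields a short exact sequence
\[
0\to G^*\to T'\to(\Tsc)'\to 0
\]
and hence $\BBH^1(K,C')\simeq H^2(K,G^*)$ and $\BBH^1(K_v,C')\simeq H^2(K_v,G^*)$ for every place $v$.

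To produce a map in the same cohomological degree from $\Sha^1_{\og}(G^*)$ (built on $H^1(G^*)$) to $\Sha^1_{\og}(C')\simeq\Sha^2_{\og}(G^*)$, I would exploit the intrinsic decomposition of $G^*$. The character module $\pi_1^{\alg}(\ol{G})=\wh{G^*}$ fits in a short exact sequence
\[
0\to F^D\to\pi_1^{\alg}(\ol{G})\to\wc{G^{\torus}}\to 0,
\]
where $F=\ker(\Gsc\to G^{\ss})$ is finite central in $\Gsc$ and $\wc{G^{\torus}}$ is the cocharacter module of the toric quotient. Cartier-dualizing yields
\[
0\to(G^{\torus})'\to G^*\to F\to 0.
\]
The associated Bockstein $\delta:H^1(K,F)\to H^2(K,(G^{\torus})')$, composed with the natural maps $H^1(K,G^*)\to H^1(K,F)$ and $H^2(K,(G^{\torus})')\to H^2(K,G^*)$, produces a canonical morphism $H^1(K,G^*)\to H^2(K,G^*)\simeq\BBH^1(K,C')$. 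The analogous construction at each place, together with the evident compatibility, yields the desired map of \TateSha groups $\Sha^1_{\og}(G^*)\to\Sha^1_{\og}(C')$.

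Surjectivity will follow from a diagram chase combining the long exact sequences of both short exact sequences $0\to G^*\to T'\to(\Tsc)'\to 0$ and $0\to(G^{\torus})'\to G^*\to F\to 0$, globally and locally. The hypothesis $(*)$ is essential: since $\Tsc$ is quasi-trivial, so is its Cartier dual $(\Tsc)'$, and Shapiro's lemma plus Hilbert's theorem $90$ give $H^1(L,(\Tsc)')=0$ for every field extension $L/K$ involved. This vanishing forces $H^2(L,G^*)\hookrightarrow H^2(L,T')$ and makes the Bockstein controllable. The global duality theorem \cref{result: global duality degree 1 and 1} then identifies $\Sha^1_{\og}(C')$ with a specific finite quotient that can be matched with the image of the composite.

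The main obstacle will be verifying that the composite captures all of $\Sha^1_{\og}(C')$ rather than only a proper subgroup; in particular, classes in $H^2(K,G^*)$ arising from the torus part $(G^{\torus})'$ rather than from the finite part $F$ via the Bockstein must still be shown to lift to $\Sha^1_{\og}(G^*)$. I expect this will require carefully exploiting the quasi-triviality of $\Tsc$ together with the fundamental diagram~(\ref{diagram: fundamental diagram associated to a flasque resolution: maixmal tori}) associated with a flasque resolution of $G$, which provides the quasi-trivial torus $H^{\torus}$ and the central flasque torus $R$ needed to produce the required lifts.
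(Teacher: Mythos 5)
There is a fatal problem with the map you construct. The arrows $H^1(K,G^*)\to H^1(K,F)$ and $\delta\colon H^1(K,F)\to H^2(K,(G^{\torus})')$ are consecutive terms of the long exact cohomology sequence attached to your short exact sequence $0\to (G^{\torus})'\to G^*\to F\to 0$, so their composite is identically zero; a fortiori the full composite $H^1(K,G^*)\to H^2(K,G^*)$ is the zero map. No diagram chase can make the zero map surject onto $\Sha^1_{\og}(C')$, so the second half of your plan cannot be carried out as written.

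The source of the trouble is that you took the displayed statement at face value: the ``$\Sha^1_{\og}(G^*)$'' there should be read as $\Sha^2_{\og}(G^*)$, as one sees both from the paper's own proof (which ends by showing $\gamma\colon\Sha^2_{\og}(G^*)\to\Sha^2_{\og}(T')\simeq\Sha^1_{\og}(C')$ is surjective) and from the use made of the proposition immediately afterwards (dualizing to an injection $\Sha^1_{\og}(C')^D\to\Sha^2_{\og}(G^*)^D$). In fact your opening paragraph already contains what is essentially a complete, and sharper, proof of the intended statement: since $\ker(\Tsc\to T)=F$ is finite, $\rho_*\colon\wc{\Tsc}\to\wc{T}$ is injective, so $T'\to(\Tsc)'$ is surjective with kernel exactly $G^*$ (its character module is $\wc{T}/\rho_*\wc{\Tsc}=\pi_1^{\alg}(\ol{G})$), whence $C'\simeq G^*[1]$ and $\Sha^1_{\og}(C')\simeq\Sha^2_{\og}(G^*)$ canonically --- an isomorphism rather than merely a surjection, and with no apparent use of hypothesis $(*)$. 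The paper argues differently: it first identifies $\Sha^1_{\og}(C')\simeq\Sha^2_{\og}(T')\simeq\Sha^1_{\og}(R')$ using the quasi-triviality of $\Tsc$ and $H^{\torus}$ in a flasque resolution $1\to R\to H\to G\to 1$, then uses the exactness of $\pi_1^{\alg}$ to produce maps $\delta\colon\Sha^1_{\og}(R')\to\Sha^2_{\og}(G^*)$ and $\gamma\colon\Sha^2_{\og}(G^*)\to\Sha^2_{\og}(T')$ with $\gamma\circ\delta$ an isomorphism, so that $\gamma$ is onto. If you discard the Bockstein construction and instead take the degree-$2$ identification from your first paragraph as the map, your argument closes up.
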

\begin{proof}
Thanks to the \distri $T'\to (\Tsc)^{\prime}\to C'\to T'[1]$, there is a \cmdm
\begin{equation*}
  \xymatrix{
  0\ar[r] & \BBH^1(K,C')\ar[r] & H^2(K,T')\ar[r] & H^2(K,(\Tsc)^{\prime})\\
  & \Sha^1_{\og}(C')\ar[r]\ar[u] & \Sha^2_{\og}(T')\ar[r]\ar[u] & \Sha^2_{\og}((\Tsc)^{\prime}).\ar[u]
  }
\end{equation*}
with exact upper row.
Since $(\Tsc)^{\prime}$ is quasi-trivial, $\Sha^2_{\og}((\Tsc)^{\prime})=0$ by \cite{HSSz15}*{Lemma 3.2(a)} and it follows that $\Sha^1_{\og}(C')\simeq \Sha^2_{\og}(T')$ after a diagram chasing.

Let $1\to R\to H\to G\to 1$ be a flasque resolution of $G$ and consider the associated fundamental diagrams
(\ref{diagram: fundamental diagram associated to a flasque resolution: groups}) and
(\ref{diagram: fundamental diagram associated to a flasque resolution: maixmal tori}).
Since $\Tsc $ and $H^{\torus}$ are quasi-trivial tori,
we obtain $H^1(K,(T_H)')=0$ and $\Sha^2_{\og}((T_H)')=0$ by the associated long exact sequence of $1\to (H^{\torus})'\to (T_H)'\to (\Tsc)^{\prime}\to 1$.
Dualizing the middle row of the diagram (\ref{diagram: fundamental diagram associated to a flasque resolution: maixmal tori}) and considering the associated long exact sequence, we obtain
\be\label{sequence: iso between Sha groups}
  \Sha^1_{\og}(R')\simeq \Sha^2_{\og}(T')\simeq \Sha^1_{\og}(C').
\ee

Next, we relate $\Sha^2_{\og}(G^*)$ with $\Sha^2_{\og}(T')$.
Recall \cite{CT08-resolution-flasque}*{Proposition 6.8} that $\pi_1^{\alg}$ is an exact functor
from the category of connected reductive $K$-groups to that of $\gal(\ol{K}|K)$-modules of finite type.
Recall also that $\pi_1^{\alg}(\ol{R})$ of a $K$-torus $R$ is its module of cocharacters $\wc{R}$.
Thus there is a \cmdm
\beac\label{diagram: alg-pi-1}
\xm{
1\ar[r] & \wc{R}\ar[r]\ar@{=}[d] & \wc{T_H}\ar[r]\ar[d] & \wc{T}\ar[r]\ar[d] & 1 \\
1\ar[r] & \wc{R}\ar[r] & \pi_1^{\alg}(\ol{H})\ar[r] & \pi_1^{\alg}(\ol{G})\ar[r] & 1
}
\eeac
of Galois modules of finite type with exact lower row by the exactness of $\pi_1^{\alg}$.
Recall there is an anti-equivalence from the category of groups of multiplicative type to that of Galois modules of finite type which respects exact sequences, thus diagram (\ref{diagram: alg-pi-1}) corresponds to a \cmdm
\[\xm{
1\ar[r] & T'\ar[r] & T_H'\ar[r] & R'\ar[r] & 1 \\
1\ar[r] & G^*\ar[r]\ar[u] & H^*\ar[r]\ar[u] & R'\ar[r]\ar@{=}[u] & 1 \\
}\]
of groups of multiplicative type over $K$.
Taking Galois cohomology gives rise to \cmdms
\[\xm{
H^1(K,R')\ar[r]\ar@{=}[d] & H^2(K,T') & \Sha^1_{\og}(R')\ar[r]^-{\simeq}\ar@{=}[d] & \Sha^2_{\og}(T')\\
H^1(K,R')\ar[r] & H^2(K,G^*)\ar[u] & \Sha^1_{\og}(R')\ar[r]^-{\delta} & \Sha^2_{\og}(G^*).\ar[u]^-{\gamma}
}\]
with $\Sha^1_{\og}(R')\simeq\Sha^2_{\og}(T')$ by (\ref{sequence: iso between Sha groups}).
It follows that $\gamma\circ\delta$ is an isomorphism,
so $\gamma$ is surjective.
Hence $\Sha^2_{\og}(G^*)\to\Sha^2_{\og}(T')\simeq\Sha^1_{\og}(C')$ is surjective.
\end{proof}

By \cite{HSz05}*{Appendix, Proposition (2)},
$\Sha^1_{\og}(C')^D\to \Sha^2_{\og}(G^*)^D$ is injective
($\Sha^1_{\og}(C')$ and $\Sha^2_{\og}(G^*)$ are discrete).
Thus there is an \ttes of groups by \cref{result: OBS to WA qs-red groups}(2)
\[1\to \ol{G(K)}\to \tstprodlim_{v\in X^{(1)}}G(K_v)\to \Sha^2_{\og}(G^*)^D.\]
Consequently, the defect of \wa may also be given by the group $\Sha^2_{\og}(G^*)$.

\section{Reciprocity obstruction to weak approximation}\label{section: reciprocity obstruction}
The next theorem is the promised generalization of \cite{HSSz15}*{Theorem 4.2} to the non-commutative case.
Let us briefly recall the construction of the pairing \cite{HSSz15}*{(17)} concerning unramified cohomology groups.
Let $Y$ be a smooth integral variety over $K$ with function field $K(Y)$.
Let $y_v:\e K_v\to Y$ be a $K_v$-point on $Y$.
Take $\al\in \hnr^3(K(Y),\munots{2})$ and lift it uniquely to $\al_{v}\in H^3(\CO_{Y,y_v},\munots{2})$
(here the uniqueness follows by the injective property for $\munots{j}$ over discrete valuation rings, see \cite{CT95SBB}*{\S3.6}).
Now $\al_v$ goes to $H^3(K_v,\munots{2})$ via $H^3(\CO_{Y,y_v},\munots{2})\to H^3(K_v,\munots{2})$.
Summing up, we obtain an evaluation pairing
\[Y(K_v)\times \hnr^3(K(Y),\munots{2})\to H^3(K_v,\munots{2}).\]
Taking the isomorphism $H^3(K_v,\QZ(2))\simeq\QZ$ for each $v\in X^{(1)}$ into account, we can construct a pairing
\be\label{pairing: unramified H 3}
\tstprodlim_{v\in X^{(1)}}Y(K_v)\times \hnr^3(K(Y),\QZ(2))\to \QZ
\ee

\begin{thm}\label{result: reciprocity OBS to WA for red-group}
Let $G$ be a connected reductive group such that $\Gsc$ satisfies $(*)$.
There exists a \ttai
\[
u:\Sha^1_{\og}(C')\to H^3_{\nr}(K(G),\QZ(2))
\]
such that each family $(g_v)\in\prod_{v\in X^{(1)}}G(K_v)$ satisfying $((g_v),\Im u)=0$ under the pairing
$(\ref{pairing: unramified H 3})$ lies in the closure $\ol{G(K)}$ with respect to the product topology.

More precisely,
the obstruction is given by
$\Im\big(H^3(G^c,\mu_n^{\ots2})\to \hnr^3(K(G),\QZ(2))\big)$
for some sufficiently large $n$.
\end{thm}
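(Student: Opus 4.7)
The plan is to reinterpret the defect $\Sha^1_{\og}(C')^D$ of \cref{result: OBS to WA qs-red groups}(2) as evaluation of classes in $H^3_{\nr}(K(G),\QZ(2))$, thereby extending \cite{HSSz15}*{Theorem 4.2} from tori to $G$.

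First I would construct $u$ as follows. Given $\alpha \in \Sha^1_{\og}(C')$, view it inside $\BBH^1(K(G),C')$ by pullback along $\e K(G) \to \e K$. The generic point $\eta_G$ of $G$ yields $g_{\mathrm{gen}} \in G(K(G))$, and \cref{result: ab-0 surj for qs-group} applied over the field $K(G)$ produces a lift $\ab^0(g_{\mathrm{gen}}) \in \BBH^0(K(G),C)$. The pairing $C \deots C' \to \Z(2)[3]$ then gives
$$u(\alpha) \ce \ab^0(g_{\mathrm{gen}}) \cup \alpha \in H^3(K(G),\QZ(2)).$$
Independence of the chosen lift follows because two choices differ by an image from $H^1(K(G),\Tsc)$ which dies after cupping with $\alpha$ and mapping to $\QZ$. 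Since $\Sha^1_{\og}(C')$ is of finite exponent, some $n$ kills $\alpha$, so $u(\alpha)$ already lives in the image of $H^3(K(G),\mu_n^{\ots 2})$.

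Second, I would show that $u(\alpha)$ is unramified on $G$ and extends to a class in $H^3(G^c,\mu_n^{\ots 2})$ for a smooth projective compactification $G^c$ of $G$ (guaranteed in characteristic zero by Hironaka). At any codimension-one point $w$ of $G$ or of $G^c \setminus G$, the point $g_{\mathrm{gen}}$ extends to $G(\CO^h_{G^c,w})$ (possibly after shrinking $X$ so that $C$ and $\alpha$ spread out to $X_0$), so $\ab^0(g_{\mathrm{gen}})$ lifts to $\BBH^0(\CO^h_{G^c,w},C)$; cup product with the constant class $\alpha$ coming from $\BBH^1(X_0,C')$ lands in $H^3(\CO^h_{G^c,w},\QZ(2))$ and hence has trivial residue. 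Combined with $\Z/n(2) \simeq \mu_n^{\ots 2}$ and the Bloch--Ogus spectral sequence, this delivers the required lift to $H^3(G^c,\mu_n^{\ots 2})$.

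Third, I would establish the core compatibility: for each $v \in X^{(1)}$ and each $g_v \in G(K_v)$,
$$\langle g_v, u(\alpha) \rangle_v \;=\; \langle \ab^0(g_v),\alpha \rangle_v \in H^3(K_v,\QZ(2)) \simeq \QZ,$$
where the left side is the evaluation pairing (\ref{pairing: unramified H 3}) and the right is the local duality of \cref{duality: local cpx of tori degree 0 1}. This is a direct consequence of functoriality of the motivic cup product: the morphism $g_v \colon \e K_v \to G$ pulls $g_{\mathrm{gen}}$ back to $g_v$ and hence $\ab^0(g_{\mathrm{gen}})$ to $\ab^0(g_v)$ by naturality of abelianization. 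Granting this, if $(g_v) \in \prod_v G(K_v)$ satisfies $((g_v),\Im u)=0$, then $\sum_v \langle \ab^0(g_v),\alpha \rangle_v = 0$ for every $\alpha \in \Sha^1_{\og}(C')$, i.e.\ the image of $(\ab^0(g_v))$ in $\Sha^1_{\og}(C')^D$ is zero, and \cref{result: OBS to WA qs-red groups}(2) forces $(g_v) \in \ol{G(K)}$.

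The principal obstacle is the extension step across the boundary of $G^c$: the abelianization of $G$ at a generic point of a boundary divisor may not immediately produce the desired local class in $C$, so purity for $\Z(2)$ over the boundary strata must be used with care. One should handle this by restricting to the open $X_0 \subset X$ where $\alpha$ is unramified, spreading $G$ out and choosing a compactification compatible with $X_0$, and invoking injectivity $H^3(\CO^h_w,\mu_n^{\ots 2}) \to H^3(K(G),\mu_n^{\ots 2})$ at each boundary divisor to reduce the ramification check to the already-controlled generic behaviour.
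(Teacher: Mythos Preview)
Your argument has a genuine gap in the second step. You claim that at a boundary divisor $w\in G^c\setminus G$ the generic point $g_{\mathrm{gen}}\in G(K(G))$ extends to an element of $G(\CO^h_{G^c,w})$. This is false: the morphism $\e K(G)\to G\hookrightarrow G^c$ does extend to $\e\CO^h_{G^c,w}\to G^c$ by properness of $G^c$, but the closed point is sent to $w\notin G$, so the extension does not factor through the affine group $G$. Consequently $\ab^0(g_{\mathrm{gen}})$ has no reason to lift to $\BBH^0(\CO^h_{G^c,w},C)$, and you have not shown that $u(\alpha)$ is unramified along boundary divisors. Your suggested workaround (spreading out over $X_0$ and invoking purity) does not touch this issue, which concerns the geometry of $G^c$ over $K$, not the arithmetic of $X$.

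The paper avoids this difficulty by passing through a flasque resolution $1\to R\to H\to G\to 1$. The point is that for a \emph{flasque} torus $R$ the torsor class $[H]\in H^1(G,R)$ extends to a class $[Y]\in H^1(G^c,R)$ by \cite{CSan87a}*{Theorem 2.2(i)}; no analogous extension exists for the identity section of $G$. One then identifies $\Sha^1_{\og}(C')\simeq\Sha^1_{\og}(R')$ (using that $\Tsc$ and $T_H$ behave well) and defines $u$ via the cup product $a\mapsto a_{G^c}\cup[Y]\in H^4(G^c,\Z(2))$, which lands in $H^3_{\nr}(K(G),\QZ(2))$ automatically because it is already defined on the smooth proper $G^c$. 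The compatibility with the local pairing is then checked through a ladder of cup-product diagrams relating $[Y]$ on $G^c$, $[H]$ on $G$, and $[T_H]$ on $T$, which plays the role of your functoriality argument in the third step. Your first and third steps are reasonable in spirit, but without the flasque-resolution device there is no mechanism to produce the unramified lift.
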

\begin{proof}
We first construct the \ttai $u:\Sha^1_{\og}(C')\to H^3_{\nr}(K(G),\QZ(2))$.
Let $\rho:\Gsc\to G$ be as before.
Let $\Tsc \subset \Gsc$ be a quasi-trivial \maxtorus and let $T\subset G$ be a \maxtorus such that $\Tsc =\rho\ui(T)$.
Recall that there is a fundamental diagram (\ref{diagram: fundamental diagram associated to a flasque resolution: maixmal tori}) associated with the flasque resolution $1\to R\to H\to G\to 1$, and recall also that $H^{\torus}$ is a quasi-trivial torus.
Moreover, there is a \ttai $\Sha^1_{\og}(C')\to H^1(K,R')$ via the inclusion $\Sha^1_{\og}(R')\to H^1(K,R')$ in view of (\ref{sequence: iso between Sha groups}).

Because $R$ is a flasque $K$-torus, applying \cite{CSan87a}*{Theorem 2.2(i)} implies that the class $[H]\in H^1(G,R)$ comes from a class $[Y]\in H^1(G^c,R)$,
where $G^c$ is a smooth compactification of $G$.
The pairing $R\ots^{\BL}R'\to \Z(2)[2]$ now induces a \ttai
$$H^1(K,R')\to H^4(G^c,\Z(2)),\ a\mt a_{G^c}\cup [Y]$$
with $a_{G^c}$ denoting the image of $a$ under $H^1(K,R')\to H^1(G^c,R')$.
The same argument as in \cite{HSSz15}*{Theorem 4.2} shows that there is a natural map
\[
H^4(G^c,\Z(2))\to H^3_{\nr}(K(G),\QZ(2))
\]
fitting into a \cmdm
\benn
  \xymatrix{
  H^4(G^c,\Z(2))\ar[r]\ar[d] & H^3_{\nr}(K(G),\QZ(2))\ar[d]\\
  H^4(K(G),\Z(2)) & H^3(K(G),\QZ(2)).\ar[l]_-{\simeq}
  }
\eenn

Now take an element $(g_v)\in \prod_{v\in X^{(1)}}G(K_v)$.
By \cref{result: OBS to WA qs-red groups}(2), $(g_v)$ lies in the closure $\ol{G(K)}$ if and only if $(g_v)$ is orthogonal to $\Sha^1_{\og}(C')$.
We consider the \cmdm (up to sign) of various cup-products
\begin{equation*}
  \xymatrix{
  \BBH^0(K_v,C) \ar@{}[r]|-{\bigtimes} & \BBH^1(K_v,C')\ar[d]\ar[r] & \QZ\ar@{=}[d]\\
  H^0(K_v,T)\ar[u]\ar[d]_{\delta_v} \ar@{}[r]|-{\bigtimes} & H^2(K_v,T')\ar[r] & \QZ\ar@{=}[d]\\
  H^1(K_v,R) \ar@{}[r]|-{\bigtimes} & H^1(K_v,R')\ar[r]\ar[u] & \QZ.
  }
\end{equation*}
Since $H^1(K_v,\Tsc )=0$ by the quasi-trivialness of $\Tsc $, $H^0(K_v,T)\to \BBH^0(K_v,C)$ is surjective.
In particular, there exists $t_v\in H^0(K_v,T)$ such that its image in $\BBH^0(K_v,C)$ equals $\ab^0_v(g_v)$.
The diagram together with Theorem \ref{result: OBS to WA qs-red groups} imply that $(g_v)\in \ol{G(K)}$ if and only if $(t_v)$ is orthogonal to $\Sha^1_{\og}(R')$.
Recall we have isomorphisms (\ref{sequence: iso between Sha groups}).
More explicitly, it means that
$$0=\tstsumlim_{v\in X^{(1)}}\lip a_v,\ab^0_v(g_v)\rip_v
=\tstsumlim_{v\in X^{(1)}}\ab^0_v(g_v)\cup a_v
=\tstsumlim_{v\in X^{(1)}}\delta_vt_v\cup a_v$$
for each $a\in \Sha^1_{\og}(C')\simeq\Sha^1_{\og}(R')$
(here the first two $a_v$ lie in $\BBH^1(K_v,C')$ while the last lies in $H^1(K_v,R')$).
Note that $\delta_vt_v$ is given by $t_v^*:H^1(T,R)\to H^1(K_v,R)$, $[T_H]\mt [T_H](t_v)=[Y](g_v)$.
Let $a_T$ be the image of $a\in H^1(K,R')$ in $H^2(K,T')$
and
let $a_{G^c}$ be the image of $a\in H^1(K,R')$ in $H^1(G^c,R')$.
It follows that
\be\label{sequence: iso of cup-product II}
 \tstsumlim_{v\in X^{(1)}}\delta_vt_v\cup a_v
=\tstsumlim_{v\in X^{(1)}}([T_H]\cup a_T)(t_v)
=\tstsumlim_{v\in X^{(1)}} ([Y]\cup a_{G^c})(g_v)
\ee
holds thanks to the \cmdm
\begin{equation*}
  \xymatrix{
  H^1(G^c,R)\ar[d] \ar@{}[r]|-{\bigtimes} & H^1(G^c,R')\ar[d]\ar[r] & \QZ\ar@{=}[d]\\
  H^1(G,R)\ar[d] \ar@{}[r]|-{\bigtimes} & H^1(G,R')\ar[d]\ar[r] & \QZ\ar@{=}[d]\\
  H^1(T,R) \ar@{}[r]|-{\bigtimes} & H^1(T,R')\ar[r] & \QZ.
  }
\end{equation*}
Note that the vanishing of the last term in (\ref{sequence: iso of cup-product II}) means that $(g_v)$ is orthogonal to the image of $u$ under the pairing $(-,-)$ which completes the proof.

Recall that $H^1(K,R')$ has finite exponent (say $N$).
Then $[Y]\in H^1(G^c,R)$ goes to $\dif_N([Y])\in H^2(G^c,{_N}R)$ induced a Kummer sequence,
and we can lift $a\in H^1(K,R')$ to $a_N\in H^1(K,{_N}R')$.
Now we obtain a class $(a_N)_{G^c}\cup\dif_N([Y])\in H^3(G^c,\mu_N^{\ots2})$
induced by ${_N}R\dtp {_N}R'\to \mu_N^{\ots2}$
and it restricts to a class in $\hnr^3(K(G),\mu_N^{\ots2})$.
\end{proof}

\begin{rmk}
The following argument was pointed out to the author by \JLCTbk.
Let $G$ be a quasi-split reductive group over $K$.
Let $B$ be a Borel subgroup of $G$ containing a maximal torus $T$ of $G$
and
let $B^-$ be the unique Borel subgroup of $G$ such that $B\cap B^-=T$.
Let $U^+=\rad^u(B)$ and $U^-=\rad^u(B^-)$ be respective unipotent radicals of $B$ and $B^-$.
Then the big cell of $G$ is $U^-\times T\times U^+$ by \cite{BCnotes-red}*{Proposition 1.4.11} (which is dense in $G$).
But $U^{\pm}$ are isomorphic to some affine spaces as varieties,
the big cell of $G$ is thus isomorphic to $T\times\A^N$ for suitable $N$,
i.e. $G$ is stably birational to its maximal torus $T$.
In this point of view, one sees that $\hnr^3(K(G),\QZ(2))\simeq\hnr^3(K(T),\QZ(2))$,
and that weak approximation for $G$ is equivalent to that for $T$.
\end{rmk}

\section{Appendix}
In this appendix, we compare cohomological obstructions to the Hasse principle and weak approximation for tori constructed in \cites{HSz16, HSSz15}, respectively.
Let $T$ be a $K$-torus and let $T'$ be its dual torus.
There is a canonical map constructed in \cite{HSz16}*{pp.~15, (20)}
\[H^2(K,T')\to H^3(T,\QZ(2))/H^3(K,\QZ(2))\]
via the \HSerre spectral sequence $H^p(K,H^q(\ol{T},\QZ(2)))\Ra H^{p+q}(T,\QZ(2))$.
Then we send $\Sha^2_{\og}(T')$ to $H^3(K(T),\QZ(2))/H^3(K,\QZ(2))$ through $H^3(T,\QZ(2))/H^3(K,\QZ(2))$.
Our goal is to show the image of $\Sha^2_{\og}(T')$ lies in $H_{\nr}^3(K(T),\QZ(2))/H^3(K,\QZ(2))$,
the unramified part of $H^3(K(T),\QZ(2))/H^3(K,\QZ(2))$.

Let $1\to R\to Q\to T\to 1$ be a \flasres with $R$ a flasque $K$-torus and $Q$ a quasi-trivial $K$-torus.

\begin{prop}\label{result: main in appendix}
There is a \cmdm $($up to sign$)$
\beac\label{diagram: main in appendix}
\xm{
H^1(K,R')\ar[r]\ar[d] & H^4(T,\Z(2))/H^4(K,\Z(2))\\
H^2(K,T')\ar[r] & H^3(T,\QZ(2))/H^3(K,\QZ(2))\ar[u].
}
\eeac
where the upper horizontal map is defined by the cup-product $H^1(K,R')\times H^1(T^c,R)\to H^4(T^c,\Z(2))$ $($see \cite{HSSz15}*{pp.~19} for details$)$, and the right vertical map is induced by the \ttes $0\to \Z(2)\to \Q(2)\to \QZ(2)\to 0$.
\end{prop}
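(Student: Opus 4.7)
The plan is to identify both routes in the diagram with a single cup-product class in $H^4(T,\Z(2))/H^4(K,\Z(2))$, via the dual flasque resolution and standard compatibilities between connecting homomorphisms and cup-products.

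First, applying the duality $M \mapsto \Hom(M,\Z)$ on character modules to the flasque resolution $1\to R\to Q\to T\to 1$ produces a short exact sequence of $K$-tori $1\to T'\to Q'\to R'\to 1$. The left vertical map is then the connecting homomorphism $\partial\colon H^1(K,R')\to H^2(K,T')$ of this dual sequence. Next, the class $[Y]\in H^1(T^c,R)$ restricts to the class $[Q]_T\in H^1(T,R)$ of the $R$-torsor $Q\to T$, and by construction $[Q]_T = \delta_T(\id_T)$, where $\delta_T\colon H^0(T,T)\to H^1(T,R)$ is the connecting map of the pulled-back sequence $1\to R\to Q\to T\to 1$ over $T$, applied to the tautological $T$-point $\id_T\in T(T)$.

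Now fix $a\in H^1(K,R')$ and write $a_T$ (resp.\ $(\partial a)_T$) for its pull-back to $H^1(T,R')$ (resp.\ $H^2(T,T')$). The pairings $R\dtp R'\to \Z(2)[2]$, $Q\dtp Q'\to \Z(2)[2]$ and $T\dtp T'\to \Z(2)[2]$ all arise from a single pairing on the tensor product of the two dual resolutions, so the standard compatibility between cup-products and connecting homomorphisms yields, up to sign, the equality
\[
 a_T\cup [Q]_T \;=\; a_T\cup \delta_T(\id_T) \;=\; \pm\, \partial(a)_T\cup \id_T
\]
as classes in $H^4(T,\Z(2))$. Hence the top horizontal map of diagram (\ref{diagram: main in appendix}) factors, modulo $H^4(K,\Z(2))$, as $a \mapsto \pm\bigl(\partial(a)_T\cup\id_T\bigr)$ via the pairing $T\dtp T'\to \Z(2)[2]$.

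It remains to show the composite
\[
H^2(K,T')\xrightarrow{\text{\rm HSerre}} H^3(T,\QZ(2))\xrightarrow{\text{\rm Bock}} H^4(T,\Z(2))/H^4(K,\Z(2))
\]
coincides (up to the same sign) with $b\mapsto \id_T\cup b_T$. This is the main obstacle: it requires unwinding the HSerre edge map constructed from the spectral sequence $H^p(K,H^q(\ol{T},\QZ(2)))\Ra H^{p+q}(T,\QZ(2))$, and identifying the inclusion $T'\hookrightarrow H^1(\ol{T},\QZ(2))$ with the map induced by the tautological element $\id_T$ under the cup-product with $T\otimes T'\to \Z(1)[2]$ (which refines to $\Z(2)[2]$ after the Bockstein from $0\to\Z(2)\to\Q(2)\to\QZ(2)\to 0$). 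Concretely, one represents both maps by explicit cocycles — using the low-degree terms of HSerre on one side, and a Koszul-style expression for $\id_T\cup(-)$ on the other — and then matches them via the canonical identification $H^1(\ol{T},\Z(1))\simeq \wc{T}\otimes \ol{K}^\times$ composed with the valuation; granting this identification, the factorization through $H^3(T,\QZ(2))$ follows because $\partial(a)_T\cup\id_T$ is divisible (being the image of a $K$-class pulled back to $T$) and thus comes from a $\QZ(2)$-class via the Bockstein. Combining the two steps gives the commutativity of diagram (\ref{diagram: main in appendix}) up to sign, as claimed.
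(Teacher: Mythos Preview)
Your strategy is genuinely different from the paper's, and the first half is sound: the identity $[Q]_T=\delta_T(\id_T)$ is correct, and the Leibniz-type compatibility $a_T\cup\delta_T(\id_T)=\pm\,\partial(a)_T\cup\id_T$ does follow from the fact that the pairings $R\dtp R'\to\Z(2)[2]$ and $T\dtp T'\to\Z(2)[2]$ are adjoint through the dual exact sequences $1\to R\to Q\to T\to 1$ and $1\to T'\to Q'\to R'\to 1$.

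The gap is in your second step. You need that the composite $H^2(K,T')\xra{\text{edge}}H^3(T,\QZ(2))/H^3(K,\QZ(2))\xra{\text{Bock}}H^4(T,\Z(2))/H^4(K,\Z(2))$ agrees with $b\mapsto \id_T\cup b_T$, and you yourself call this ``the main obstacle'' and then write ``granting this identification''. That is precisely the nontrivial content of the proposition; everything else is formal. Matching an edge map of a Hochschild--Serre spectral sequence with a cup-product by a tautological class is not automatic and requires an actual argument---you have to identify the inclusion $T'\hra H^1(\ol T,\QZ(2))$ used in the edge map with the morphism induced by $\id_T$ under the pairing, and check this is compatible with the Bockstein. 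Your final sentence also contains a slip: ``$\partial(a)_T\cup\id_T$ is divisible (being the image of a $K$-class pulled back to $T$)'' is not right---pullback from $K$ does not force divisibility, and in any case what you need for the Bockstein to hit this class is that it be \emph{torsion} (which it is, since $\partial(a)\in H^2(K,T')$ is torsion), not divisible.

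For comparison, the paper avoids the tautological-class route entirely. It fixes a level $n$ killing $[Q]$ and $[Q']$, introduces the truncated object $ND(T)=(\tau_{\le1}\BR p_*\mu_n^{\ots2})[1]$, and builds a map $\chi:H^1(T,{_n}R)\to\hom_K({_n}R',ND(T))$ via the local-to-global spectral sequence. The commutativity is then checked through a chain of cup-product diagrams linking $H^1(K,R')\to H^2(K,{_n}R')\to H^2(K,ND(T))\to H^2(K,{_n}T')\to H^2(K,T')$ along the top and the corresponding cohomology of $T$ along the bottom; a separate diagram with rows $0\to{_n}R'\to R'\to R'\to0$, $0\to{_n}T'\to M_n\to R'\to0$, $0\to T'\to Q'\to R'\to0$ shows this composite is the coboundary $\partial$. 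The $ND(T)$ device is exactly what replaces your unproved edge-map/cup-product comparison: it packages the spectral-sequence step into morphisms in the derived category whose compatibility with cup-products is formal. If you want to push your approach through, you must supply a genuine proof of the edge-map identification---for instance by imitating the $ND(T)$ argument or by an explicit cocycle computation as in \cite{HSk13descent}.
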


\begin{cor}\label{result: Sha-2 of tori non-ram}
The image of $\Sha^2_{\og}(T')$ in $H^3(K(T),\QZ(2))/H^3(K,\QZ(2))$ lies in the unramified part.
\end{cor}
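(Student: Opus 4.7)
The plan is to lift $\alpha$ to $H^1(K,R')$ via a flasque resolution, apply \cref{result: main in appendix} to represent its image geometrically on a smooth compactification of $T$, and then descend to a finite-level class on that compactification so that purity yields the unramified conclusion.

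First, take the fixed \flasres $1\to R\to Q\to T\to 1$ of $T$ and consider its dual \ttes $1\to T'\to Q'\to R'\to 1$; since $Q$ is quasi-trivial, so is $Q'$, and hence $\Sha^2_{\og}(Q')=0$ by \cite{HSSz15}*{Lemma 3.2(a)}. Consequently, given $\alpha\in\Sha^2_{\og}(T')$, its image in $H^2(K,Q')$ vanishes locally off a finite set, whence lies in $\Sha^2_{\og}(Q')=0$, so the long exact sequence
\[
H^1(K,R')\to H^2(K,T')\to H^2(K,Q')
\]
allows us to lift $\alpha$ to some $\tilde\alpha\in H^1(K,R')$.

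Second, apply \cref{result: main in appendix} to $\tilde\alpha$. Because $R$ is flasque, \cite{CSan87a}*{Theorem 2.2(i)} extends the torsor class $[Q]\in H^1(T,R)$ to a class $[Y]\in H^1(T^c,R)$ on a smooth compactification $T^c$ of $T$, and the upper horizontal map in diagram (\ref{diagram: main in appendix}) sends $\tilde\alpha$ to $\tilde\alpha_{T^c}\cup[Y]\in H^4(T^c,\Z(2))$ (then restricted to $T$). By commutativity of (\ref{diagram: main in appendix}), the image $\bar\alpha$ of $\alpha$ in $H^3(T,\QZ(2))/H^3(K,\QZ(2))$ is identified, via the Bockstein induced by $0\to\Z(2)\to\Q(2)\to\QZ(2)\to 0$, with the image of $\tilde\alpha_{T^c}\cup[Y]$ modulo $H^4(K,\Z(2))$.

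Third, realize $\bar\alpha$ at finite level. Restricting to the $\ell$-primary part, or invoking that $H^1(K,R')$ is killed by the order of a finite Galois group splitting $R'$, we pick $N\ge 1$ with $N\tilde\alpha=0$ and use the Kummer sequence $1\to {_N}R'\to R'\to R'\to 1$ to lift $\tilde\alpha$ to $\tilde\alpha_N\in H^1(K,{_N}R')$; similarly $[Y]$ is lifted under $1\to{_N}R\to R\to R\to 1$ on $T^c$ to $\partial_N[Y]\in H^2(T^c,{_N}R)$. The pairing ${_N}R\dtp{_N}R'\to\mu_N^{\ots2}$ then yields a finite-level class
\[
(\tilde\alpha_N)_{T^c}\cup\partial_N[Y]\in H^3(T^c,\mu_N^{\ots2}),
\]
and naturality of Bockstein and cup-product (the same bookkeeping as at the end of the proof of \cref{result: reciprocity OBS to WA for red-group}) identifies its restriction to $H^3(K(T),\mu_N^{\ots2})\hra H^3(K(T),\QZ(2))$ with a representative of $\bar\alpha$ modulo $H^3(K,\QZ(2))$. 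Finally, since $T^c$ is smooth projective, for every codimension-one point $x\in T^c$ this class factors through $H^3(\CO_{T^c,x},\mu_N^{\ots2})$, so absolute purity forces its residue at $x$ to vanish; hence the image of $\alpha$ in $H^3(K(T),\QZ(2))/H^3(K,\QZ(2))$ is unramified.

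The main obstacle is the third step: one must verify that the finite-level cup-product on $T^c$ truly represents $\bar\alpha$ in $H^3(K(T),\QZ(2))$ modulo $H^3(K,\QZ(2))$, which amounts to a careful compatibility check between integral and finite coefficients across the Bockstein square and the pairing $R\dtp R'\to\Z(2)[2]$ used in \cref{result: main in appendix}; once that is settled, the unramified conclusion is a formal consequence of purity.
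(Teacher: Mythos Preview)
Your proposal is correct and follows essentially the same route as the paper: lift from $\Sha^2_{\og}(T')$ to $H^1(K,R')$ via the flasque resolution, then use \cref{result: main in appendix} together with the fact that the cup-product $\tilde\alpha_{T^c}\cup[Y]$ lives on a smooth compactification $T^c$ to conclude unramifiedness. The paper's proof is considerably terser because it invokes the isomorphism $\Sha^1_{\og}(R')\simeq\Sha^2_{\og}(T')$ (your first step) as already known, and for the passage from $H^4(T^c,\Z(2))$ to $H^3_{\nr}(K(T),\QZ(2))$ it simply cites \cite{HSSz15}*{Theorem 4.2}, where the existence of the natural map $H^4(T^c,\Z(2))\to H^3_{\nr}(K(T),\QZ(2))$ compatible with the Bockstein is established; this is precisely the content of your third step, which you reconstruct by hand via finite-level Kummer lifts and purity. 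So the ``main obstacle'' you flag is genuine bookkeeping, but it is already packaged in the cited reference and need not be redone here.
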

\begin{proof}
The map $H^1(K,R')\to H^4(T,\Z(2))$ factors through $H^4(T^c,\Z(2))\to H^4(T,\Z(2))$
by construction \cite{HSSz15}*{Theorem 4.2},
so the image of $\Sha^1_{\og}(R')\simeq \Sha^2_{\og}(T')$ lies in the unramified part $H^3_{\nr}(K(T),\QZ(2))/H^3(K,\QZ(2))$.
\end{proof}

The rest of the appendix is devoted to the proof of \cref{result: main in appendix}.
We begin with some observations on torsion groups under consideration.
Let $L|K$ be a finite Galois extension splits both $R$ and $T$.
The vanishing of $H^1(T_L,R_L)=0$ implies that the class $[Q]\in H^1(T,R)$ is torsion by a \rescores argument.
The spectral sequence $H^p(K,\ext^q_{\ol{K}}(\ol{R}',\ol{T}'))\Ra \ext^{p+q}_K(R',T')$ together with the vanishing $\ext^1_{\ol{K}}(\ol{R}',\ol{T}')=0$ implies that
$H^1(K,\hom_{\ol{K}}(\ol{R}',\ol{T}'))\to \ext^1_K(R',T')$ is an isomorphism.
Thus the group $\ext^1_K(R',T')$ is torsion.
We choose a suitable integer $n$ such that the classes $[Q]\in H^1(T,R)$ and $[Q']\in\ext^1_K(R',T')$ are both $n$-torsion.

\vspace{1em}
\noindent\textbf{Step 1: We verify the commutativity of diagram (\ref{diagram: main in appendix}) with a different construction of the left vertical arrow as in diagram (\ref{diagram: cup-product 2 times 5}).}
\vspace{1em}

The \Kumseq $1\to {_n}R\to R\to R\to 1$ yields a surjection $H^1(T,{_n}R)\to {_n}H^1(T,R)$, i.e. $[Q]=\iota_n([Q_n])$ for some class $[Q_n]\in H^1(T,{_n}R)$ with $\iota_n:H^1(T,{_n}R)\to H^1(T,R)$ induced by the Kummer sequence.

Let $p:T\to \e K$ be the structural morphism.
Let $\BD(K)$ be the derived category of bounded complexes of Galois modules.
We consider the object $ND(T)=(\tau_{\le 1}\BR p_*\munots{2})[1]$
in $\BD(K)$ which fits into a \distri
\be\label{triangle: for ND(T)}
\munots{2}[1]\to ND(T)\to H^1(\ol{T},\munots{2})\to\munots{2}[2].
\ee
We will follow \cite{HSk13descent}*{Proposition 1.1} to construct a map
\[\chi:H^1(T,{_n}R)\to \hom_K({_n}R',ND(T)).\]
The pairing ${_n}R\deots{_n}R'\to \munots{2}$ yields a map $H^1(T,{_n}R)\to H^1(T,\ulhom({_n}R',\munots{2}))$.
Moreover, we obtain a map $H^1(T,\ulhom({_n}R',\munots{2}))\to \ext_T^1({_n}R',\munots{2})$
from the exact sequence in low degrees associated to the local-to-global spectral sequence
\[H^p(T,\ulext_T^q({_n}R',\munots{2}))\Rightarrow \ext_T^{p+q}({_n}R',\munots{2}).\]
Since $\BR\hom_T({_n}S',-)=\BR\hom_K({_n}S',-)\circ \BR p_*(-)$ is a composition, formally there is a canonical isomorphism
\[\ext_T^1({_n}R',\munots{2})\simeq R^1\hom_K({_n}R',\BR p_*\munots{2}).\]
Because $\tau_{\ge 2}\BR p_*\munots{2}$ is acyclic in degrees $0$ and $1$, we obtain an isomorphism
\[R^1\hom_K({_n}R',\tau_{\le1}\BR p_*\munots{2})\simeq R^1\hom_K({_n}R',\BR p_*\munots{2})\]
from the distinguished triangle
$\tau_{\le1}\BR p_*\munots{2}\to \BR p_*\munots{2}\to \tau_{\ge2}\BR p_*\munots{2}\to ND(T)$.
Now $\chi$ is just the composition
\[H^1(T,{_n}R)\to \ext^1_T({_n}R',\mu_n^{\ots 2})\simeq R^1\hom_K({_n}R',\tau_{\le1}\BR p_*\munots{2})=\hom_K({_n}R',ND(T)).\]
All the above constructions yield a diagram of cup-products
\beac\label{diagram: cup-product 4 times 3}
  \xm{
  H^1(K,R')\ar[d]_{\dif_n} \ar@{}[r]|-{\bigtimes} & H^1(T,R)\ar[r] & H^4(T,\Z(2))\\
  H^2(K,{_n}R')\ar@{=}[d] \ar@{}[r]|-{\bigtimes} & H^1(T,{_n}R)\ar[d]\ar[u]_{\iota_n}\ar[r] & H^3(T,\munots{2})\ar@{=}[d]\ar[u]_{\dif}\\
  H^2(K,{_n}R')\ar@{=}[d] \ar@{}[r]|-{\bigtimes} & \hom_K({_n}R',\BR p_*\munots{2}[1])\ar@{=}[d]\ar[r] & H^2(K,\BR p_*\munots{2}[1])\\
  H^2(K,{_n}R') \ar@{}[r]|-{\bigtimes} & \hom_K({_n}R',ND(T))\ar[r] & H^2(K,ND(T))\ar[u]
  }
\eeac
where the upper diagram commutes by functoriality of the cup product pairing
(see \cite{HSz16}*{diagram (26)} for more details),
and the lower two diagrams commute by \cite{MilneEC}*{Proposition V.1.20}.
Diagram (\ref{diagram: cup-product 4 times 3}) gives the commutativity of the left two squares of the following diagram
(where the second square comes from the lower three rows):
\beac\label{diagram: cup-product 2 times 5}
\xm{
H^1(K,R')\ar[r]\ar[d] & H^2(K,{_n}R')\ar[r]\ar[d] & H^2(K,ND(T))\ar[d]\ar[r] & H^2(K,{_n}T')\ar[r]\ar[d] & H^2(K,T')\ar[d]\\
H^4(T,\Z(2)) & H^3(T,\munots{2})\ar[l]\ar@{=}[r] & H^3(T,\munots{2})\ar[r] & \frac{H^3(T,\munots{2})}{H^3(K,\munots{2})}\ar[r] & \frac{H^3(T,\QZ(2))}{H^3(K,\QZ(2))}.
}
\eeac
The right two squares in diagram (\ref{diagram: cup-product 2 times 5}) commute by construction of the \HSerre spectral sequences.
Passing to the quotient by respective subgroup of constants and taking limits over all $n$ imply the commutativity of diagram (\ref{diagram: main in appendix}).
Consequently, we are done if the upper row of diagram (\ref{diagram: cup-product 2 times 5}) gives the coboundary map $H^1(K,R')\to H^2(K,T')$ induced by the \ses $1\to T'\to Q'\to R'\to 1$ of tori.

\vspace{1em}
\noindent\textbf{Step 2: We check that composite of arrows in the upper row of diagram (\ref{diagram: cup-product 2 times 5}) is just the desired coboundary map in diagram (\ref{diagram: main in appendix}).}
\vspace{1em}

The \Kumseq $1\to {_n}T'\to T'\to T'\to 1$ induces a surjection
$\ext^1_K(R',{_n}T')\to {_n}\ext^1_K(R',T')$, so the class $[Q']$ lifts to a class $[M_n]\in \ext^1_K(R',{_n}T')$.
Similarly, the \Kumseq $1\to {_n}R'\to R'\to R'\to 1$ induces an isomorphism
$\hom_K({_n}R',{_n}T')\to \ext^1_K(R',{_n}T')$
by the vanishing of $\hom_K(R',{_n}T')=0$.
Hence there is a \cmdm
\beac\label{diagram: dt of tori 3 times 3}
\xm{
  0\ar[r] & {_n}R'\ar[r]\ar[d] & R'\ar[r]\ar[d] & R'\ar[r]\ar@{=}[d] & 0\\
  0\ar[r] & {_n}T'\ar[r]\ar[d] & M_n\ar[r]\ar[d] & R'\ar[r]\ar@{=}[d] & 0\\
  0\ar[r] & T'\ar[r] & Q'\ar[r] & R'\ar[r] & 0.
}
\eeac
Applying the functor $H^n(K,-)$ to diagram (\ref{diagram: dt of tori 3 times 3}) yields a \cmdm
\[\xm{
H^1(K,R')\ar[r]\ar[d] & H^2(K,{_n}R')\ar[d]\\
H^2(K,T') & H^2(K,{_n}T')\ar[l]
}\]
which tells us the composite $H^1(K,R')\to H^2(K,{_n}R')\to H^2(K,{_n}T')\to H^2(K,T')$ is exact the coboundary map $H^1(K,R')\to H^2(K,T')$ induced by the bottom row of diagram (\ref{diagram: dt of tori 3 times 3}).

It remains to show the map $H^2(K,{_n}R')\to H^2(K,{_n}T')$ obtained from $\ext^1_K(R',{_n}T')\simeq\hom_K({_n}R',{_n}T')$ coincides with the composition $H^2(K,{_n}R')\to H^2(K,ND(T))\to H^2(K,{_n}T')$.
The cup-product pairing
\[\Phi(-,-):H^1(\ol{T},{_n}R)\times H^0(\ol{K},{_n}R')\to H^1(\ol{T},\munots{2}),\]
defines a map
$H^1(\ol{T},{_n}R)\to \hom_{\ol{K}}({_n}R'(\ol{K}),H^1(\ol{T},\munots{2}))$.
Again there is a \cmdm by \cite{MilneEC}*{Proposition V.1.20}:
\begin{equation*}
  \xymatrix{
  H^1(\ol{T},{_n}R) \ar[d]\ar@{}[r]|-{\bigtimes} & {_n}R'(\ol{K})\ar@{=}[d]\ar[r] & H^1(\ol{T} ,\munots{2})\ar@{=}[d]\\
   \hom_{\ol{K}}({_n}R'(\ol{K}),\BR \ol{p}_*\munots{2}[1]) \ar@{}[r]|-{\bigtimes} & {_n}R'(\ol{K})\ar@{=}[d]\ar[r] & H^0(\ol{K},\BR \ol{p}_*\munots{2}[1])\\
  \hom_{\ol{K}}({_n}R'(\ol{K}),ND({\ol{T}})) \ar[u]^{\simeq}\ar@{}[r]|-{\bigtimes} & {_n}R'(\ol{K})\ar[r] & H^0(\ol{K},ND(\ol{T}))\ar[u].
  }
\end{equation*}
which may be rewritten into the following \cmdm
\[\xm{
  H^1(\ol{T},{_n}R)\ar[r]\ar[rd]^-{\chi}\ar[d]_{\Phi(-,-)}
  & \hom_{\ol{K}}({_n}R'(\ol{K}),\BR \ol{p}_*\munots{2}[1])\\
  \hom_{\ol{K}}({_n}R'(\ol{K}),H^1(\ol{T},\munots{2}))
  & \hom_{\ol{K}}({_n}R'(\ol{K}),ND(\ol{T}))\ar[l]^-{\al}\ar[u]_{\simeq}\\
}\]
The arrow $\al$ is induced by the \distri (\ref{triangle: for ND(T)}).
Now $\al\circ\chi=\Phi(-,-)$ says that ${_n}R'(\ol{K})\to ND(\ol{T})\to H^1(\ol{T},\munots{2})$
is the same as $\Phi([Q_n],-)$.
In particular, $H^2(K,{_n}R')\to H^2(K,ND(T))\to H^2(K,{_n}T')$ is the same as $H^2(K,{_n}R')\to H^2(K,{_n}T')$ obtained from the identification $\ext^1_K(T,{_n}R)\simeq\hom_K({_n}R',{_n}T')$.

\begin{bibdiv}
\begin{biblist}

\bibselect{CF}

\end{biblist}
\end{bibdiv}

Univerit\'e Paris-Sud, Institut de Math\'ematique d'Orsay, B\^atiment 307, 91405 Orsay, France\\
\indent E-mail address: yisheng.tian@u-psud.fr

\end{document}